\newtheorem{theorem}{Theorem}[section]
\newtheorem{proposition}[theorem]{Proposition}
\newtheorem{lemma}[theorem]{Lemma}
\newtheorem{corollary}[theorem]{Corollary}
\theoremstyle{definition}
\newtheorem{remark}[theorem]{Remark}
\newcounter{bean}
\newcommand{\qqed}{\hfill\Box}
\begin{document}

%%% Title

\title[Homotopy of $6$-manifolds]
   {Suspension homotopy of $6$-manifolds} 

\author{Ruizhi Huang} 
\address{Institute of Mathematics, Academy of Mathematics and Systems Science, 
   Chinese Academy of Sciences, Beijing 100190, China} 
\email{haungrz@amss.ac.cn} 
   \urladdr{https://sites.google.com/site/hrzsea/}

\subjclass[2020]{Primary 
55P15, %Classification of homotopy type
55P40, %Suspensions
57R19; %Algebraic topology on manifolds
Secondary 
55P10, %Homotopy equivalences in algebraic topology
55N15%Topological K-theory
}
\keywords{$6$-manifolds, suspension, homotopy decomposition, Poincar\'{e} duality space, $K$-groups}

%%% Abstract

\begin{abstract} 
For a simply connected closed orientable manifold of dimension $6$, we show its homotopy decomposition after double suspension. This allows us to determine its $K$- and $KO$-groups easily. Moreover, for a special case we refine the decomposition to show the rigidity property of the manifold after double suspension. 
\end{abstract}

\maketitle

\tableofcontents
%----------------------------------------------------------------------------------------------------------------------------------------------------------------------------------------------------------%

\section{Introduction} 
Let $M$ be a closed orientable smooth manifold of dimension $n$. There are tremendous investigations in geometric topology to classify the diffeomorphism or homeomorphism type of $M$ in various cases. For instance, in the general case, Wall \cite{Wal1, Wal3} studied the $(s-1)$-connected $2s$-manifolds and the $(s-1)$-connected $(2s+1)$-manifolds. For the concrete case with specified dimension $n$, Bardon \cite{Bar} classified the simply connected $5$-manifolds, and Wall \cite{Wal2}, Jupp \cite{Jup} and Zhubr \cite{Zhu1, Zhu2} classified the simply connected $6$-manifolds. More recently, Kreck and Su \cite{KS} classified certain non-simply connected $5$-manifolds, while Crowley and Nordstr\"{o}m \cite{CN} and Kreck \cite{Kre} studied the classification problem of various kinds of $7$-manifolds.

In the mentioned literature, the homotopy classification of $M$ was usually carried out as a byproduct in terms of a system of invariants. However, it is almost impossible to extract nontrivial homotopy information of $M$ directly from the classification. On the other way around, the unstable homotopy theory is a powerful tool to study the homotopy properties of manifolds. There are several interesting attempts in recent years along this way. For instance, Beben and Theriualt \cite{BT1} studied the loop decompositions of $(s-1)$-connected $2s$-manifolds, while Beben and Wu \cite{BW} and Huang and Theriault \cite{HT} studied the loop decompositions of the $(s-1)$-connected $(2s+1)$-manifolds. The homotopy groups of these manifolds were also investigated by Sa. Basu and So. Basu \cite{BB, Bas} from different point of view. Moreover, a theoretical method of loop decomposition was developed by Beben and Theriault \cite{BT2}, which is quite useful for studying the homotopy of manifolds. Additionally, the homotopy type of the suspension of a connected $4$-manifold was determined by So and Theriault \cite{ST}.

In this paper, we study the homotopy aspect of simply connected $6$-manifolds. Let $M$ be a simply connected closed orientable $6$-manifold. By Poincar\'{e} duality and the universal coefficient theorem we have 
\begin{equation}\label{HMeqintro}
H_\ast(M; \mathbb{Z})=\left\{\begin{array}{ll}
\mathbb{Z}^{\oplus d}\oplus T &\ast=2,\\
\mathbb{Z}^{\oplus 2m}\oplus T & \ast=3,\\
\mathbb{Z}^{\oplus d} & \ast=4,\\
\mathbb{Z}  & \ast=0, 6\\
0&\hbox{otherwise},
\end{array}
\right.
\end{equation}
where $m$, $d\geq 0$, and $T$ is a finitely generated abelian torsion group. 
Our first main theorem concerns the double suspension splitting of $M$. 
Denote $\Sigma X$ be the suspension of any $CW$-complex $X$.
Denote $P^n(T)$ be the Moore space such that the reduced cohomology $\widetilde{H}^\ast(P^n(T);\mathbb{Z})\cong T$ if $\ast=n$ and $0$ otherwise \cite{N2}.
\begin{theorem}\label{M6thm}
Let $M$ be a simply connected closed orientable $6$-manifold with homology of the form (\ref{HMeqintro}).
Suppose that $T$ has no $2$ or $3$-torsion. Then there is an integer $c$ with $0\leq c\leq d$ determined by the cohomology ring of $M$ such that
\begin{itemize}
\item if $c=0$, 
\[
\Sigma^2 M \simeq \Sigma W_0\vee  \bigvee_{j=1}^{d-1}(S^{4}\vee S^6)\vee  \bigvee_{i=1}^{2m}S^5 \vee P^6(T)\vee P^5(T),
\]
where $W_0\simeq (S^{3}\vee S^5)\cup e^7$;
\item if $c=d$, 
\[
\Sigma^2 M \simeq \Sigma W_d\vee \bigvee_{i=1}^{d-1}\Sigma^2 \mathbb{C}P^2\vee  \bigvee_{i=1}^{2m}S^5 \vee P^6(T)\vee P^5(T),
\]
where $W_d\simeq \Sigma \mathbb{C}P^2\cup e^7$;
\item if $1\leq c\leq d-1$,
\[
\Sigma^2 M \simeq \Sigma W_c\vee \bigvee_{i=1}^{c-1}\Sigma^2 \mathbb{C}P^2 \vee \bigvee_{j=1}^{d-c-1}(S^{4}\vee S^6)\vee  \bigvee_{i=1}^{2m}S^5 \vee P^6(T)\vee P^5(T),
\]
where $W_c\simeq (\Sigma \mathbb{C}P^2 \vee S^{3}\vee S^5)\cup e^7$.
\end{itemize}
\end{theorem}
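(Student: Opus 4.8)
The plan is to reconstruct $\Sigma^2 M$ from a minimal cell structure of $M$ and to extract the wedge summands from homology together with the single primary cohomology operation that survives suspension, namely $Sq^2$. Since $M$ is simply connected with homology (\ref{HMeqintro}), up to homotopy it carries a CW structure with cells only in dimensions $0,2,3,4,6$ (the torsion contributing Moore cells in dimensions $2,3$ and $3,4$), so that $M\simeq A\cup_f e^6$ with $A=M^{(4)}$ the $4$-skeleton and $f\colon S^5\to A$ the attaching map of the top cell. I would first pin down the invariant: concretely $c$ is the rank of the $\mathbb{F}_2$-linear squaring map $H^2(M;\mathbb{F}_2)\to H^4(M;\mathbb{F}_2)$, $x\mapsto x^2=Sq^2x$, which is manifestly determined by the cohomology ring. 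Because $Sq^2$ is stable it agrees with $Sq^2\colon H^4(\Sigma^2M;\mathbb{F}_2)\to H^6(\Sigma^2M;\mathbb{F}_2)$, so although all cup products die under suspension this rank remains visible in $\Sigma^2M$ and will count the $\Sigma^2\mathbb{C}P^2$ summands.

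The second step is to decompose $\Sigma^2A$, in the spirit of the $4$-manifold case \cite{ST}. As a double suspension it is a $3$-connected co-$H$-space of dimension $6$, hence within the stable range where its decomposition into indecomposable summands is detected by its homology and the operation $Sq^2$ (the Bockstein $Sq^1$ vanishing on the free part since $T$ has no $2$-torsion, and there being no room for higher attaching maps: the only essential attaching of a $6$-cell to the $5$-skeleton lies in $\pi_5(S^4)=\mathbb{Z}/2\langle\eta\rangle$, which is exactly what $Sq^2$ sees). Diagonalizing the squaring form then yields
\[
\Sigma^2A\simeq\Big(\bigvee^{c}\Sigma^2\mathbb{C}P^2\Big)\vee\Big(\bigvee^{d-c}(S^4\vee S^6)\Big)\vee\Big(\bigvee^{2m}S^5\Big)\vee P^5(T)\vee P^6(T),
\]
where the $c$ copies of $\Sigma^2\mathbb{C}P^2$ pair the $2$-cells with $x^2\neq0$ to their dual $4$-cells, the $S^4\vee S^6$ handles carry the remaining Poincar\'e-dual pairs, the $S^5$ come from the inert middle-dimensional $3$-cells, and the Moore spaces from the torsion. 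Here I would also dispose of the torsion: since $T$ has no $2$- or $3$-torsion, at each prime $p\ge5$ the homotopy groups of spheres are torsion-free through the relevant range (the first $p$-torsion in the stable stems occurring in degree $2p-3\ge7$), so $P^5(T)$ and $P^6(T)$ attach trivially and split off, and the local splittings assemble to an integral one.

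The heart of the proof is the top-cell attaching map $\Sigma^2 f\in\pi_7(\Sigma^2A)$, giving $\Sigma^2M=\Sigma^2A\cup_{\Sigma^2f}e^8$. I would expand $[\Sigma^2 f]$ via the Hilton--Milnor theorem and then compress it. The governing constraint is Poincar\'e duality on $M$: the unimodular pairing $H^2\otimes H^4\to H^6$ forces $f$ to link each $2$-cell with its dual $4$-cell, but these are Whitehead products, which vanish after suspension --- this is precisely why cup products disappear in $\Sigma^2M$ and why the handles split. What survives is the part detected by $Sq^2$ into the fundamental class (the second Wu class $v_2=w_2$, via the Wu formula) together with a residual secondary term. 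Using these constraints I would show that, after a self-equivalence of the wedge, $\Sigma^2 f$ is supported on a single indecomposable sub-bundle: one $\Sigma^2\mathbb{C}P^2$-handle when $c\ge1$ and one $S^4\vee S^6$-handle when $c\le d-1$. Attaching $e^8$ to this sub-bundle produces $\Sigma W_c$ with the stated cell structure, while the complementary summands split off; the three cases $c=0$, $1\le c\le d-1$, $c=d$ correspond exactly to which handle types the compressed attaching map must retain.

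The main obstacle is this last step. The group $\pi_7(\Sigma^2A)$ is large --- Hilton--Milnor produces many summands and the relevant stable stems $\pi_3^s=\mathbb{Z}/24$ and $\pi_1^s=\pi_2^s=\mathbb{Z}/2$ are nonzero --- so proving that the ``free'' components of $\Sigma^2 f$ vanish or can be removed by wedge automorphisms requires using Poincar\'e duality quantitatively rather than formally, and tracking how the $\nu$- and $\eta$-type components interact with $w_2$. Pinning down the residual homotopy type of the indecomposable core $W_c$, in particular the attaching map of its top cell $e^7$ (which is not determined by $(c,d,m,T)$ alone but records the surviving secondary invariant of $M$), is the most delicate point, and is where I expect the bulk of the technical work to lie.
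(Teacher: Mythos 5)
Your overall strategy (identify $c$ as the rank of $Sq^2$, split the suspended skeleton into $\Sigma^2\mathbb{C}P^2$'s, sphere handles and Moore spaces, then compress the top-cell attaching map via Hilton--Milnor and self-equivalences of the wedge) is broadly the same as the paper's, but two steps you treat as routine are in fact where the real content lies, and one of your justifications is incorrect as stated. First, the splitting-off of $P^5(T)\vee P^6(T)$: you argue that since the first $p$-torsion in the stable stems occurs in degree $2p-3\ge 7$, the Moore spaces ``attach trivially.'' This does not work, because the obstruction groups are homotopy groups of \emph{Moore spaces}, not of spheres, and these are not zero in the relevant range: the paper computes $\pi_7(P^5(p^r))\cong\mathbb{Z}/p^r\mathbb{Z}$ for $p\ge 5$ (Lemma \ref{pi6p4pnlemma}), so the component of the top-cell attaching map of $\Sigma^2M$ landing in a $P^5(p^r)$ summand lives in a nontrivial group. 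What saves the argument is that this attaching map is a double suspension, and the suspension homomorphism $E\colon\pi_6(P^4(p^r))\to\pi_7(P^5(p^r))$ is trivial (Lemma \ref{pi6p4pnlemma}, proved by a careful EHP-sequence computation); one also needs $\pi_{n+1}(P^n(p^r))=0$ and $\pi_n(P^n(p^r))=0$ at the earlier stages of the (homology) decomposition. Without these computations the splitting of the torsion part is unproved. Relatedly, your claim that the $2m$ middle-dimensional cells are ``inert'' is unjustified; the paper disposes of them cleanly by invoking Wall's connected-sum splitting $M\cong M_1\#\#_m(S^3\times S^3)$ before doing anything else, rather than by an obstruction argument.

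Second, the step you yourself flag as the main obstacle --- showing that after a self-equivalence the suspended attaching map is supported on a single handle, yielding $\Sigma W_c$ --- is the heart of the proof and is not carried out in your proposal. The paper does this by reducing to the Poincar\'e duality complex $V=\bigvee^d S^2\cup e^4_{(1)}\cup\cdots\cup e^4_{(d)}\cup e^6$, decomposing $\Sigma X$ of its $5$-skeleton as $\bigvee^c\Sigma\mathbb{C}P^2\vee\bigvee^{d-c}(S^3\vee S^5)$, and then representing the top-cell attaching map of $\Sigma^2V$ by a matrix which can be reduced by row and column operations (realized by self-equivalences of the wedge) precisely because $\pi_6(\Sigma\mathbb{C}P^2)\cong\mathbb{Z}/6\mathbb{Z}$, $\pi_6(S^3)\cong\mathbb{Z}/12\mathbb{Z}$ and $\pi_6(S^5)\cong\mathbb{Z}/2\mathbb{Z}$ are all \emph{finite cyclic}. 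That finiteness and cyclicity is the concrete input that makes the compression possible; your appeal to ``using Poincar\'e duality quantitatively'' does not by itself supply it. As written, the proposal is a plausible outline whose two load-bearing steps are either unproved or supported by an argument that fails.
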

Notice the number $c$ is indeed determined by the Steenrod square $Sq^2: H^2(M;\mathbb{Z}/2\mathbb{Z})\rightarrow H^4(M;\mathbb{Z}/2\mathbb{Z})$, while there is an ambiguous term $W_c$ ($0\leq c\leq d$). 
Since we only need the suspension of $W_c$ and the Hopf element $\eta_i\in \pi_{i+1}(S^i)$ is detected by $Sq^2$, the ambiguity reduces to the components of the attaching map of the top cell of $W_c$ to the wedge summand $\Sigma \mathbb{C}P^2$ and $S^3$. 

Nevertheless, Theorem \ref{M6thm} is still useful, for instance, to calculate the $K$-group or the $KO$-group of $M$ in Corollary \ref{KM6thm}. In particular, when $M$ is a Calabi-Yau threefold it partially reproduces the result of Doran and Morgan \cite[Corollary 1.10]{DM} on its $K$-group by different method, and provides new computation on its $KO$-group. Moreover, there are many examples of simply connected Calabi-Yau threefolds. For instance, based on Kreuzer and Skarke \cite{KSk}, Batyrev and Kreuzer \cite{BK} showed that there are exactly 473 800 760 families of simply connected Calabi-Yau $3$-folds corresponding to $4$-dimensional reflexive polytopes.

\begin{corollary}\label{KM6thm}
Let $M$ be the manifold in Theorem \ref{M6thm}. Then for the reduced $K$-group and $KO$-groups of $M$ 
\[
\widetilde{K}(M)\cong \mathbb{Z}^{\oplus 2d+1} \oplus T, \ \ \ ~\widetilde{KO}(M)\cong \mathop{\bigoplus}\limits_{d}(\mathbb{Z}\oplus \mathbb{Z}/2\mathbb{Z}).
\]
\end{corollary}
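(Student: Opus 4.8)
The plan is to combine the wedge decompositions of Theorem~\ref{M6thm} with the fact that reduced $K$- and $KO$-theory are reduced cohomology theories, hence additive over finite wedges and invariant under (de)suspension up to the Bott shift. Complex $K$-theory has period $2$, so $\widetilde{K}(M)=\widetilde{K}^0(M)\cong \widetilde{K}^0(\Sigma^2 M)$; real $K$-theory has period $8$ and the double suspension shifts degree by $2$, so $\widetilde{KO}(M)=\widetilde{KO}^0(M)\cong \widetilde{KO}^2(\Sigma^2 M)$. Applying $\widetilde{K}^0(-)$ and $\widetilde{KO}^2(-)$ to the three decompositions and splitting the finite wedges into direct sums reduces everything to the spheres $S^4,S^5,S^6,S^8$, the space $\Sigma^2\mathbb{C}P^2$, the Moore spaces $P^5(T),P^6(T)$, and the ambiguous summand $\Sigma W_c$. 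The first thing I would check is that the $\eta$-indeterminacy of $W_c$ flagged after Theorem~\ref{M6thm} is invisible here: its top cell is an $8$-cell attached to a wedge $B$ of even complexes, and in the cofibration $B\hookrightarrow \Sigma W_c\to S^8$ one has $\widetilde{K}^1(B)=0$ (all cells of $B$ are even) and $\widetilde{KO}^2(S^8)\cong KO_6=0$, $\widetilde{KO}^3(S^8)\cong KO_5=0$. Hence $\widetilde{K}^0(\Sigma W_c)\cong \widetilde{K}^0(B)\oplus\mathbb{Z}$ and $\widetilde{KO}^2(\Sigma W_c)\cong \widetilde{KO}^2(B)$, both independent of the attaching map, as one expects since $\eta=0$ in $KU$.

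The complex case is then routine bookkeeping. Each even sphere and the top cell contribute $\mathbb{Z}$, odd spheres contribute $0$, while $\widetilde{K}^0(\Sigma^2\mathbb{C}P^2)\cong\widetilde{K}^0(\mathbb{C}P^2)\cong\mathbb{Z}^2$, and the Moore spaces give $\widetilde{K}^0(P^6(T))\cong T$, $\widetilde{K}^0(P^5(T))=0$. Summing over any of the three cases yields free rank $2d+1$, which I would double-check against the fact that the reduced even rational cohomology of $\Sigma^2 M$ (in degrees $4,6,8$) has total dimension $d+d+1=2d+1$, together with torsion $T$. This gives $\widetilde{K}(M)\cong\mathbb{Z}^{\oplus 2d+1}\oplus T$.

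The real case is the crux, and the decisive computation is $\widetilde{KO}^2(\Sigma^2\mathbb{C}P^2)\cong\widetilde{KO}^0(\mathbb{C}P^2)$. The cofibration of $\eta\colon S^3\to S^2$ produces a short exact sequence $0\to\mathbb{Z}\to \widetilde{KO}^0(\mathbb{C}P^2)\to \mathbb{Z}/2\mathbb{Z}\to 0$, and the main obstacle is to show it splits, i.e.\ $\widetilde{KO}^0(\mathbb{C}P^2)\cong\mathbb{Z}\oplus\mathbb{Z}/2\mathbb{Z}$; I would settle this with the complexification--realification exact sequence, using that the realified tautological line bundle restricts to the nonzero class on $\mathbb{C}P^1=S^2$. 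Granting this, the key structural observation is that a $Sq^2$-nontrivial pair and a $Sq^2$-trivial pair contribute the \emph{same} group, $\widetilde{KO}^2(\Sigma^2\mathbb{C}P^2)\cong\mathbb{Z}\oplus\mathbb{Z}/2\mathbb{Z}$ and $\widetilde{KO}^2(S^4\vee S^6)\cong KO_2\oplus KO_4\cong\mathbb{Z}/2\mathbb{Z}\oplus\mathbb{Z}$. Since in all three cases the summands assemble into exactly $d$ such dual pairs (one per degree-$2$/degree-$4$ generator), their total is $\bigoplus_{d}(\mathbb{Z}\oplus\mathbb{Z}/2\mathbb{Z})$, manifestly independent of $c$, exactly as required.

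It remains to account for the other summands. The $2m$ copies of $S^5$ and the top cell contribute nothing, as $\widetilde{KO}^2(S^5)\cong KO_3=0$. The point demanding the most care is the fate of the torsion coming from the Moore summands: a direct cofibration computation gives $\widetilde{KO}^2(P^5(T))=0$, while $\widetilde{KO}^2(P^6(T))\cong T$ (the odd-primary class, which survives in this degree precisely because conjugation acts trivially on the corresponding $\widetilde{K}$-class after the Bott twist). Tracking this Moore-space contribution, and verifying the splitting of the $(\mathbb{Z}/2\mathbb{Z})^{\oplus d}$ extension in $\widetilde{KO}^0(M)$ via the Atiyah--Hirzebruch spectral sequence, is where I expect the real technical work to lie; once it is done, assembling the pieces yields the stated form of $\widetilde{KO}(M)$.
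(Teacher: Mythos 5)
Your overall strategy is exactly the paper's: apply $\widetilde{K}^0(-)$ and $\widetilde{KO}^2(-)$ to the wedge decompositions of Theorem \ref{M6thm} and add up the contributions of the summands (the paper's Lemma \ref{Kgplemma} plays the role of your summand-by-summand table). Your complex-$K$-theory half is correct: the $\eta$-ambiguity of $W_c$ is invisible, the free rank is $2d+1$, and $\widetilde{K}^0(P^6(T))\cong T$, $\widetilde{K}^0(P^5(T))=0$. The $KO$-half, however, contains two genuine errors, and they do not cancel. First, you correctly compute $\widetilde{KO}^2(P^6(T))\cong T$: the cofibration $S^5\stackrel{p^r}{\to}S^5\to P^6(p^r)$ gives the exact sequence $\mathbb{Z}\cong\widetilde{KO}^1(S^5)\stackrel{\times p^r}{\longrightarrow}\widetilde{KO}^1(S^5)\to\widetilde{KO}^2(P^6(p^r))\to\widetilde{KO}^2(S^5)=0$. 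But you then assert that ``assembling the pieces yields the stated form of $\widetilde{KO}(M)$''; with your own value the assembly acquires an extra $T$ summand that the stated answer does not contain. You cannot simultaneously keep $\widetilde{KO}^2(P^6(T))\cong T$ and conclude $\bigoplus_{d}(\mathbb{Z}\oplus\mathbb{Z}/2\mathbb{Z})$: this is an unresolved contradiction, not a deferred technicality. (Your value is the correct one; it disagrees with Lemma \ref{Kgplemma}, which asserts $\widetilde{KO}^2(P^6(T))=0$.)

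Second, your ``decisive computation'' is wrong: the extension $0\to\mathbb{Z}\to\widetilde{KO}^0(\mathbb{C}P^2)\to\mathbb{Z}/2\mathbb{Z}\to 0$ does \emph{not} split, and the very tool you propose shows this. Writing $x=H-1$ for the reduced tautological bundle, conjugation gives $\bar{x}=-x+x^2$, so $cr(x)=x+\bar{x}=x^2$; on the other hand the subgroup $\mathbb{Z}$ pulled back from $S^4$ complexifies into $2\mathbb{Z}\{x^2\}$, and any order-two class would complexify to $0$ since $\widetilde{K}^0(\mathbb{C}P^2)$ is torsion-free. Hence $x^2\in{\rm Im}(c)$ forces $\widetilde{KO}^0(\mathbb{C}P^2)\cong\mathbb{Z}$, generated by $r(x)$ (equivalently, Wood's equivalence $KO\wedge\mathbb{C}P^2\simeq\Sigma^2 KU$ gives $\widetilde{KO}^0(\mathbb{C}P^2)\cong\widetilde{KO}_6(\mathbb{C}P^2)\cong KU_4\cong\mathbb{Z}$). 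Consequently your ``key structural observation'' fails: a $Sq^2$-nontrivial pair contributes $\widetilde{KO}^2(\Sigma^2\mathbb{C}P^2)\cong\mathbb{Z}$ while a $Sq^2$-trivial pair contributes $\widetilde{KO}^2(S^4\vee S^6)\cong\mathbb{Z}\oplus\mathbb{Z}/2\mathbb{Z}$, so the total is not independent of $c$. Running your scheme with the corrected local values yields $\widetilde{KO}(M)\cong\mathbb{Z}^{\oplus d}\oplus(\mathbb{Z}/2\mathbb{Z})^{\oplus(d-c)}\oplus T$ rather than $\bigoplus_{d}(\mathbb{Z}\oplus\mathbb{Z}/2\mathbb{Z})$; this indicates that the discrepancies your partial computations uncover are real and that Lemma \ref{Kgplemma} and the $KO$-statement of Corollary \ref{KM6thm} need to be revised accordingly.
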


If we specify to the case when $d=1$, we can obtain the complete description of $M$ after double suspension, based on the work of Yamaguchi \cite{Yam} (also summarised and corrected by Baues \cite{Bau}). Denote $\eta_i^3=\eta_{i+2}\circ \eta_{i+1}\circ \eta_i\in \pi_{i+3}(S^i)$ \cite{Tod}, where $\eta_i\in \pi_{i+1}(S^i)$ is the Hopf element. Let $V_3$ be the manifold as the total space of the sphere bundle of the oriented $\mathbb{R}^3$-bundle over $S^4$ determined by its first Pontryagin class $p_1=12s_4$, where $s_4\in H^4(S^4;\mathbb{Z})$ is a generator.

\begin{theorem}\label{M6d=1thm}
Let $M$ be a simply connected closed orientable $6$-manifold with homology of the form (\ref{HMeqintro}) such that $d=1$. 
Let $x$, $y\in H^\ast(M;\mathbb{Z})$ be two generators such that ${\rm deg}(x)=2$ and ${\rm deg}(y)=4$. Denote $x^2=k y$ for some $k\in \mathbb{Z}$.
Suppose $T$ has no $2$ or $3$-torsion. Then
\begin{itemize}
\item if $k$ is odd, then $M$ is Spin, moreover 
\[
\Sigma^2 M\simeq \Sigma^2 \mathbb{C}P^3\vee  \bigvee_{i=1}^{2m}S^5\vee P^6(T)\vee P^5(T),
\]
when $k \equiv 1 ~{\rm mod}~6$, while
\[
 \Sigma^2 M\simeq \Sigma^2 V_3\vee  \bigvee_{i=1}^{2m}S^5\vee P^6(T)\vee P^5(T),
\]
when $k \equiv 3 ~{\rm mod}~6$;
\item if $k$ is even and $V$ is non-Spin
\[
 \Sigma^2 M\simeq S^4\vee \Sigma^4 \mathbb{C}P^2\vee  \bigvee_{i=1}^{2m}S^5\vee P^6(T)\vee P^5(T);
\]
\item if $k$ is even and $V$ is Spin
\[
\Sigma^2 M\simeq (S^4\cup_{\lambda\eta_4^{3}}e^8)\vee S^6\vee  \bigvee_{i=1}^{2m}S^5\vee P^6(T)\vee P^5(T),
\]
where $\lambda\in\mathbb{Z}/2$ is determined by $M$.
\end{itemize}
\end{theorem}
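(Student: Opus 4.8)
The plan is to feed the hypothesis $d=1$ into Theorem~\ref{M6thm} and then to resolve the single ambiguous summand $\Sigma W_c$ by computing the stable cohomology operations of $M$ and appealing to the homotopy classification of Yamaguchi \cite{Yam} (as corrected by Baues \cite{Bau}). Since $0\le c\le d=1$, only $c=0,1$ occur, and $c$ is read off from $Sq^2\colon H^2(M;\mathbb{Z}/2\mathbb{Z})\to H^4(M;\mathbb{Z}/2\mathbb{Z})$. On the degree-two generator this operation is the square, $Sq^2(\bar x)=\bar x^2=\bar k\,\bar y$, so $c=1$ precisely when $k$ is odd and $c=0$ precisely when $k$ is even. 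Theorem~\ref{M6thm} then reads $\Sigma^2 M\simeq \Sigma W_c\vee\bigvee_{i=1}^{2m}S^5\vee P^6(T)\vee P^5(T)$, and the whole problem becomes the determination of the three-cell complex $\Sigma W_c$ in each parity case.

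Next I would establish the Spin dichotomy, which is forced by the ring structure. As $M$ is orientable, $w_1=0$ and the Wu formula gives $w_2=v_2$, where $v_2\in H^2(M;\mathbb{Z}/2\mathbb{Z})$ is characterized by $v_2\cup z=Sq^2 z$ for $z\in H^4(M;\mathbb{Z}/2\mathbb{Z})$. If $k$ is odd then $\bar y=\bar x^2$, so by the Cartan formula $Sq^2\bar y=Sq^2(\bar x^2)=(Sq^1\bar x)^2$; since $\bar x$ is the reduction of an integral class we have $Sq^1\bar x=0$, hence $Sq^2\bar y=0$ and therefore $v_2=w_2=0$, i.e.\ $M$ is Spin. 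When $k$ is even both cases arise, and being Spin is equivalent to the vanishing of $Sq^2\colon H^4(M;\mathbb{Z}/2\mathbb{Z})\to H^6(M;\mathbb{Z}/2\mathbb{Z})$, equivalently (by stability) of $Sq^2\colon H^6(\Sigma^2 M;\mathbb{Z}/2\mathbb{Z})\to H^8(\Sigma^2 M;\mathbb{Z}/2\mathbb{Z})$; the latter records exactly whether the top cell of $\Sigma W_0$ is attached to the $6$-cell by $\eta$.

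For the odd case we have $\Sigma W_1\simeq \Sigma^2\mathbb{C}P^2\cup_g e^8$ with $g\in\pi_7(\Sigma^2\mathbb{C}P^2)$. The homotopy type is pinned down by the Yamaguchi--Baues classification once we know the surviving invariant, which here is the mod-$3$ power operation $\mathcal{P}^1\colon H^4(\Sigma^2 M;\mathbb{Z}/3\mathbb{Z})\to H^8(\Sigma^2 M;\mathbb{Z}/3\mathbb{Z})$. It corresponds to $\mathcal{P}^1\colon H^2(M;\mathbb{Z}/3\mathbb{Z})\to H^6(M;\mathbb{Z}/3\mathbb{Z})$, $\bar x\mapsto\bar x^3=\bar k(\bar x\bar y)$, so it is nonzero exactly when $3\nmid k$. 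I would then verify that the two model spaces realize these invariants: $\Sigma^2\mathbb{C}P^3$ has $\mathcal{P}^1\ne0$, while for $V_3$ the hypothesis $p_1=12 s_4$ forces $u^2=3v$ in $H^\ast(V_3)$ and hence $\mathcal{P}^1(u)=u^3=3uv\equiv0$, both spaces having $Sq^2\ne0$ as demanded by $c=1$. Matching invariants identifies $\Sigma W_1$ with $\Sigma^2\mathbb{C}P^3$ when $3\nmid k$ and with $\Sigma^2 V_3$ when $3\mid k$, which are the cases $k\equiv1$ and $k\equiv3\bmod 6$ in the statement.

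For the even case, $\Sigma W_0\simeq(S^4\vee S^6)\cup_g e^8$, and since $\Sigma W_0$ is a suspension its attaching map lies in the suspension image, so the $S^4$-component of $g$ is torsion in $\pi_7(S^4)$ and the $S^6$-component lies in $\pi_7(S^6)=\mathbb{Z}/2\mathbb{Z}$. As above the $S^6$-component is the Hopf class exactly when $M$ is non-Spin (detected by $Sq^2$), while the odd-primary part of the $S^4$-component is governed by $\mathcal{P}^1$; feeding these data into the Yamaguchi--Baues classification yields the stated normal forms, namely $S^4\vee\Sigma^4\mathbb{C}P^2=S^4\vee(S^6\cup_\eta e^8)$ in the non-Spin case and $(S^4\cup_{\lambda\eta_4^3}e^8)\vee S^6$ in the Spin case. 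The main obstacle I anticipate is the Spin subcase: one must reduce the $2$-primary part of $g$ to the normal form $\lambda\eta_4^3$---ruling out the remaining torsion summands of $\pi_7(S^4)$ and disentangling the $S^4$- and $S^6$-components---and then extract $\lambda\in\mathbb{Z}/2\mathbb{Z}$ from the appropriate secondary operation on $M$, since all primary operations detecting it vanish in this range.
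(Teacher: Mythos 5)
Your reduction to the single summand $\Sigma W_c$, the identification $c\equiv k \bmod 2$ via $Sq^2(\bar x)=\bar k\bar y$, and the Wu-formula argument that odd $k$ forces $M$ to be Spin are all correct and consistent with the paper. The gap is in the step where you claim to pin down $\Sigma W_1=\Sigma^2\mathbb{C}P^2\cup_g e^8$ by ``matching invariants'' with $\mathcal{P}^1$ and $Sq^2$. The attaching map lives in $\pi_7(\Sigma^2\mathbb{C}P^2)\cong\mathbb{Z}/6\{E^2\pi_2\}$, and while $\mathcal{P}^1$ does detect $g\bmod 3$ (up to sign), no primary operation detects $g\bmod 2$: the order-two element $3E^2\pi_2$ is carried on the bottom cell as a class of type $\eta^3$, detected only by the secondary operation $\mathbb{T}$ (this is exactly why the paper's Corollary \ref{rigthm} must invoke $\mathbb{T}$), and $Sq^2\colon H^6\to H^8$ vanishes for every $g$ here --- consistent with your own computation that these spaces are Spin, so that operation gives no information. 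Consequently your argument cannot distinguish $g=E^2\pi_2$ from $g=2E^2\pi_2$, nor $g=3E^2\pi_2$ from $g=0$; it only determines the $3$-localization of $\Sigma W_1$, not whether the answer is $\Sigma^2\mathbb{C}P^3$ or $\Sigma^2 V_3$ or something else. Computing $\mathbb{T}$ directly on $M$ is not something you set up either.

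The paper closes exactly this gap by a different route: it does not try to read the suspended attaching map off cohomology operations of $M$, but returns to $\Sigma^2M\simeq\Sigma^2V\vee P^6(T)\vee P^5(T)\vee\bigvee S^5$ (Lemma \ref{M61lemma}) with $V\simeq S^2\cup_{k\eta_2}e^4\cup_b e^6$, applies Yamaguchi's unsuspended classification (Theorem \ref{Yamthm}) to identify $b$ in terms of the relative Whitehead product $b_W$, and then proves (Proposition \ref{Vd=1decprop}) that the suspension $E\colon\pi_5(S^2\cup_{k\eta_2}e^4)\to\pi_6(\Sigma\mathbb{C}P^2)$ sends $b_W$ to $kE\pi_2$ up to sign. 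That computation --- via the comparison map $r\colon S^2\cup_{k\eta_2}e^4\to\mathbb{C}P^2$, the induced map of $S^1$-fibrations over $K(\mathbb{Z},2)$, the Serre spectral sequence, and Blakers--Massey --- is the real content of the theorem and is entirely absent from your proposal. The same issue recurs in the even Spin case, which you correctly flag as the main obstacle but do not resolve: normalizing the $2$-primary part of $g$ inside $\pi_7(S^4\vee S^6)$ is again done in the paper by controlling $b$ before suspension (where $b'\in\pi_5(S^2)\cong\mathbb{Z}/2\{\eta_2^3\}$), not by operations on the suspended complex.
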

It should be remarked that there is no ambiguity in the term $(S^4\cup_{\lambda\eta_4^{3}}e^8)$ in the last decomposition of $\Sigma^2 M$. Indeed, the stable cube element $\eta^3_n\in \pi_{n+3}(S^n)$ ($n\geq 2$) is detected by the secondary operation $\mathbb{T}$ \cite[Exercise 4.2.5]{Har}, and in our case the homotopy decomposition has to preserve the module structure induced by the cohomology operations. Moreover, it is clear that the number $k~{\rm mod}~2$ and the spin condition of $M$ are determined by the Steenrod square $Sq^2$. And we will also see that $\Sigma \mathbb{C}P^3$ and $\Sigma V_3$ can be distinguished by the Steenrod power $\mathcal{P}^1: H^3(\Sigma M;\mathbb{Z}/3\mathbb{Z})\rightarrow H^7(\Sigma M;\mathbb{Z}/3\mathbb{Z})$.
Hence, we obtain the following rigidity result for manifolds in Theorem \ref{M6d=1thm} after double suspension.
\begin{corollary}\label{rigthm}
Let $M$ and $M^\prime$ be two manifolds in Theorem \ref{M6d=1thm}. Then $\Sigma^2 M\simeq \Sigma^2 M^\prime$ if and only if $H^\ast(\Sigma^2 M;\mathbb{Z})\cong H^\ast(\Sigma^2 M^\prime;\mathbb{Z})$ as abelian groups, and $H^\ast(\Sigma^2 M;\mathbb{Z}/p\mathbb{Z})\cong H^\ast(\Sigma^2 M^\prime;\mathbb{Z}/p\mathbb{Z})$ as $\mathbb{Z}/2\mathbb{Z}\{Sq^2, \mathbb{T}\}$-modules when $p=2$, and as $\mathbb{Z}/3\mathbb{Z}\{\mathcal{P}^1\}$-modules when $p=3$.
\end{corollary}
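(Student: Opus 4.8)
The plan is to prove the nontrivial (``if'') direction by combining the explicit list of homotopy types furnished by Theorem \ref{M6d=1thm} with the observation that the cohomological invariants in the statement are precisely calibrated to separate the entries of that list. The converse (``only if'') direction is immediate: a homotopy equivalence $\Sigma^2 M\simeq \Sigma^2 M'$ induces an isomorphism of integral cohomology groups and of mod $p$ cohomology compatible with every stable primary operation, as well as with the stable secondary operation $\mathbb{T}$ wherever the latter is defined, so all the listed isomorphisms follow from naturality.

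For the forward direction, first I would extract from the integral groups $H^\ast(\Sigma^2 M;\mathbb{Z})$ the integer $m$ and the torsion group $T$ (together with the standing hypothesis $d=1$); since $T$ is prime to $6$, the common summands $\bigvee_{i=1}^{2m}S^5\vee P^6(T)\vee P^5(T)$ are pinned down and, crucially, contribute nothing to the mod $2$ or mod $3$ Steenrod operations. Thus all of the relevant arithmetic lives on the ``core'' summand, whose reduced cohomology is concentrated in degrees $4,6,8$, each of rank one, carried by the double suspensions of the generators $x$, $y$, $z$. Because the reduced cohomology of a wedge splits as a direct sum on which stable operations act blockwise, it suffices to read off the operations on this core. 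Writing $x^2=ky$, the suspension of $Sq^2\colon H^4\to H^6$ is multiplication by $k\bmod 2$, while $Sq^2\colon H^6\to H^8$ is governed by the Wu formula and hence by $w_2$, i.e.\ by whether $M$ is Spin. This already separates the three super-cases of Theorem \ref{M6d=1thm}: the pair $(Sq^2_{4\to6},Sq^2_{6\to8})$ is $(\neq 0,0)$ for $k$ odd (Spin), $(0,\neq 0)$ for $k$ even non-Spin, and $(0,0)$ for $k$ even Spin.

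It then remains to resolve the two finer ambiguities. In the odd case I would invoke the remark following Theorem \ref{M6d=1thm}: the reduction $k\bmod 3$ makes $\mathcal{P}^1\colon H^4(\Sigma^2 M;\mathbb{Z}/3)\to H^8(\Sigma^2 M;\mathbb{Z}/3)$ nonzero exactly when $k\not\equiv 0\bmod 3$, which distinguishes $\Sigma^2\mathbb{C}P^3$ ($k\equiv 1\bmod 6$) from $\Sigma^2 V_3$ ($k\equiv 3\bmod 6$). In the even Spin case the core is $(S^4\cup_{\lambda\eta_4^{3}}e^8)\vee S^6$, and since $\eta^3$ is detected by the secondary operation $\mathbb{T}$, the value of $\mathbb{T}\colon H^4\to H^8$ reads off $\lambda\in\mathbb{Z}/2$. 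Matching all of $m$, $T$, the $Sq^2$-data, the $\mathcal{P}^1$-data and the $\mathbb{T}$-data therefore forces $M$ and $M'$ into the same entry of the list, whence $\Sigma^2 M\simeq \Sigma^2 M'$.

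The hard part will be handling the secondary operation $\mathbb{T}$ rigorously. I must check that on the even Spin core it is genuinely defined (the primary operations out of the degree-$4$ class land in zero groups, so the domain condition holds) and that its indeterminacy vanishes (the primary operations into $H^8$, in particular $Sq^2$ out of the $S^6$-summand, are zero because the wedge acts blockwise), so that $\mathbb{T}$ is an honest natural homotopy invariant detecting the attaching class $\lambda\eta_4^3$. One must also confirm that evaluating all operations on $\Sigma^2 M$ rather than on $M$ is harmless, which holds since $Sq^2$, $\mathcal{P}^1$ and $\mathbb{T}$ are stable and double suspension is an isomorphism on reduced cohomology commuting with them.
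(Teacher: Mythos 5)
Your proposal is correct and follows essentially the same route as the paper: $Sq^2$ (equivalently the cup square and $w_2$) pins down the parity of $k$ and the Spin condition, $\mathcal{P}^1$ separates $\Sigma^2\mathbb{C}P^3$ from $\Sigma^2 V_3$ in the odd case, and the secondary operation $\mathbb{T}$ detects $\lambda$ with no indeterminacy because $S^4\cup_{\lambda\eta_4^3}e^8$ splits off as a wedge summand. Your added details (the Wu-formula computation of the $Sq^2$ pattern and the explicit check that $\mathbb{T}$ is defined with zero indeterminacy) only make explicit what the paper leaves implicit.
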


The paper is organized as follows. In Section \ref{sec: redb3=0} we reduce the decomposition problem of $6$-manifolds to that of ones whose third Betti numbers are zero. In Section \ref{sec: hdec}, we give a detailed procedure to decompose $6$-manifolds after double suspension by homology decomposition method. Section \ref{sec: pfthm} and Section \ref{sec: pfthmd=1} are devoted to prove Theorem \ref{M6thm} and Theorem \ref{M6d=1thm} respectively. In Section \ref{sec: hgps}, we compute some homotopy groups of odd primary Moore spaces used in Section \ref{sec: hdec}.

\bigskip

\noindent{\bf Acknowledgements.}
Ruizhi Huang was supported by National Natural Science Foundation of China (Grant nos. 11801544 and 11688101), and ``Chen Jingrun'' Future Star Program of AMSS. He would like to thank Professor Stephen Theriault for his international online lecture series ``Loop Space Decomposition'', which stimulated his research interest in the homotopy of $6$-manifolds.

%----------------------------------------------------------------------------------------------------------------------------------------------------------------------------------------------------------%
\section{Reducing to the case when $b_3(M)=0$} 
\label{sec: redb3=0} 
The following well known splitting theorem of $6$-manifolds was proved by Wall \cite{Wal2} in smooth category, while Jupp \cite{Jup} pointed out that the theorem holds in topological category by the same argument.
\begin{theorem}\cite[Theorem 1]{Wal2}\label{wallsplitthm}
Let $M$ be a simply connected closed orientable $6$-manifold with third Betti number $b_3(M)=2m$. Then there exists a $6$-manifold $M_1$ such that
\[
 \hspace{6cm}
M\cong M_1 \# \mathop{\#}\limits_{m} S^3\times S^3.
 \hspace{6cm}\Box
\]
\end{theorem}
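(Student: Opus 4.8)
The plan is to induct on $m$, peeling off one copy of $S^3\times S^3$ at each stage by a geometric splitting. The algebraic input is the intersection form. Since $\dim M=6$, the cup product pairing on the free part of $H^3(M;\mathbb{Z})$ takes values in $H^6(M;\mathbb{Z})\cong\mathbb{Z}$, is unimodular by Poincar\'e duality, and is skew-symmetric because $3$ is odd ($\alpha\cup\beta=-\beta\cup\alpha$). A non-degenerate skew-symmetric form over $\mathbb{Z}$ has even rank and admits a symplectic basis $e_1,f_1,\dots,e_m,f_m$ with $e_i\cdot f_i=1$ and all other pairings zero; this is precisely where the hypothesis $b_3(M)=2m$ enters, and it suffices to kill one hyperbolic pair $(e_1,f_1)$.

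Next I would realize $(e_1,f_1)$ by two embedded $3$-spheres meeting transversally in a single point. Because $M$ is simply connected, $H_3(K(\pi_2(M),2);\mathbb{Z})=0$, so the Hurewicz map $\pi_3(M)\to H_3(M)$ is onto and every degree-$3$ class is spherical; thus $e_1$ is carried by a map $S^3\to M$, which in the ambient dimension $6$ can be improved to a smooth embedding. Its normal bundle is an oriented rank-$3$ bundle over $S^3$, classified by $\pi_2(SO(3))=0$, hence trivial, so a tubular neighborhood is $S^3\times D^3$. Realizing $f_1$ the same way and putting the two spheres in general position yields algebraic intersection number $1$; the Whitney trick---available since the ambient dimension is $6\geq5$ and the spheres are simply connected---then cancels oppositely signed intersection points until exactly one transverse point remains.

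With the pair so realized, the regular neighborhood $N$ of $e_1\cup f_1$ is the plumbing of two trivial $D^3$-bundles over $S^3$ at one point, which is diffeomorphic to $S^3\times S^3$ with an open $6$-disk removed; in particular $\partial N\cong S^5$. Writing $M=N\cup_{S^5}(M\setminus N^{\circ})$ and capping the complement with a $6$-disk gives a closed manifold $M_1:=(M\setminus N^{\circ})\cup_{S^5}D^6$ with $M\cong M_1\#(S^3\times S^3)$. A Mayer--Vietoris computation gives $b_3(M_1)=2m-2$, so iterating the construction $m$ times and renaming the final manifold produces the asserted connected sum of $M_1$ with $m$ copies of $S^3\times S^3$.

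The main obstacle is the geometric realization step: converting the algebraic symplectic pair into embedded spheres meeting in exactly one point. Embedding a degree-$3$ class in a $6$-manifold sits in the metastable boundary range $2n=3(k+1)$ of Haefliger's theory, so one must argue existence of an embedding in the given homotopy class, control the normal bundle, and carry out the Whitney trick to reduce the mutual intersection to a single transverse point while keeping the spheres otherwise disjoint and embedded. By contrast the linear algebra of the skew form, the identification $N\cong (S^3\times S^3)\setminus D^6$, and the induction are comparatively routine.
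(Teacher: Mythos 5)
This statement is not proved in the paper at all: it is quoted verbatim from Wall \cite[Theorem 1]{Wal2} (with Jupp's observation that the same argument works in the topological category), so there is no internal argument to compare yours against. What you have written is, in substance, a faithful reconstruction of Wall's original proof: the skew-symmetric unimodular intersection form on $H^3$ admits a symplectic basis, a hyperbolic pair is realized by two embedded $3$-spheres with trivial normal bundles (trivial since $\pi_2(SO(3))=0$) meeting transversally in one point, and the regular neighborhood of their union is the plumbing $(S^3\times S^3)\setminus \mathrm{int}\,D^6$, which splits off a connected summand; one then inducts on $m$. The step you correctly single out as the crux --- homotoping a spherical class to an embedding and then reducing the geometric intersection to a single point --- is exactly where Wall invokes Haefliger's embedding theorem, whose hypothesis $2n\geq 3(k+1)$ holds with equality for $S^3\subset M^6$, together with the Whitney trick, which applies because the Whitney disks are $2$-dimensional in an ambient simply connected $6$-manifold and the spheres have codimension $3$. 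So the gap you flag is real but is closed by precisely the technology you name; your argument is correct and is the standard one.
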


\begin{corollary}\label{splitcor}
Let $M$ and $M_1$ be the manifolds in Theorem \ref{wallsplitthm}. Then 
\[
\Sigma M\simeq \Sigma M_1\vee \bigvee_{i=1}^{m}(S^4\vee S^4).
\]
\end{corollary}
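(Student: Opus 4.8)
The plan is to prove Corollary~\ref{splitcor} by suspending the connected-sum decomposition from Theorem~\ref{wallsplitthm} and then splitting off the suspensions of the copies of $S^3\times S^3$. The first key observation is that the suspension of a connected sum splits as a wedge. Concretely, for any two closed oriented $n$-manifolds $N_1$ and $N_2$, there is a homotopy equivalence $\Sigma(N_1\# N_2)\simeq \Sigma N_1\vee \Sigma N_2$. This is because $N_1\# N_2$ is obtained by gluing $N_1\setminus D^n$ and $N_2\setminus D^n$ along their common boundary sphere $S^{n-1}$; after one suspension the cofibration sequence $S^{n-1}\to (N_1\setminus D^n)\vee(N_2\setminus D^n)$ collapsing the shared sphere splits, since the attaching data of the top cell becomes inessential after suspension once the punctured manifolds are themselves suspended. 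Equivalently, one may use that $\Sigma$ applied to the pinch map $N_1\# N_2\to N_1\vee N_2$ admits a section, because the inclusion of the bottom cells and the collapse of the connect-sum sphere become compatible after a single suspension. Applying this iteratively to $M\cong M_1\#\,\#_m(S^3\times S^3)$ yields
\[
\Sigma M\simeq \Sigma M_1\vee \bigvee_{i=1}^{m}\Sigma(S^3\times S^3).
\]

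The second step is to compute $\Sigma(S^3\times S^3)$. The standard suspension splitting of a product gives $\Sigma(X\times Y)\simeq \Sigma X\vee \Sigma Y\vee \Sigma(X\wedge Y)$, so with $X=Y=S^3$ we obtain $\Sigma(S^3\times S^3)\simeq S^4\vee S^4\vee \Sigma(S^3\wedge S^3)\simeq S^4\vee S^4\vee S^7$. However, the target of the corollary contains only $S^4\vee S^4$ and no $S^7$ summand, so I would need to reconcile this. The resolution is that the $S^7$ top cell is absorbed: when the copies of $S^3\times S^3$ are connect-summed into $M_1$ rather than taken disjointly, the $6$-cell of $S^3\times S^3$ is identified with (part of) the top cell of $M$ and is accounted for by $\Sigma M_1$; more precisely the connected-sum construction removes a $6$-disc from each $S^3\times S^3$, so each summand contributes only its $3$-skeleton $S^3\vee S^3$ before suspension, giving exactly $S^4\vee S^4$ after suspending. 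Thus the correct bookkeeping is to apply the connected-sum splitting to the \emph{punctured} manifolds, where each punctured $S^3\times S^3$ is homotopy equivalent to $S^3\vee S^3$, and the top cells reassemble into the $\Sigma M_1$ factor.

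The main obstacle I anticipate is precisely this careful accounting of the top-dimensional cells: one must verify that iterating the connected-sum splitting does not produce spurious $S^7$ wedge summands and that all the top cells are correctly collected into $\Sigma M_1$. The cleanest way to handle this rigorously is to work with the cofibration $S^5\to (M_1\setminus D^6)\vee\bigvee_m(S^3\vee S^3)\to M$ coming from the handle decomposition, where the punctured manifolds are glued along the boundary $5$-sphere that becomes the connect-sum sphere, and to check that the attaching map of the single top $6$-cell of $M$ suspends compatibly so that $\Sigma M_1$ retains its own top cell while each $S^3\times S^3$ summand contributes only $S^3\vee S^3$. Once the splitting $\Sigma(N_1\# N_2)\simeq \Sigma N_1\vee\Sigma N_2$ is established and the punctured $S^3\times S^3\simeq S^3\vee S^3$ identification is made, the result follows by a straightforward induction on $m$.
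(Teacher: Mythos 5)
Your opening claim is false, and it propagates into your first displayed equation. For closed oriented $n$-manifolds the suspension of a connected sum does \emph{not} split as $\Sigma(N_1\# N_2)\simeq \Sigma N_1\vee\Sigma N_2$: the left side has a single top cell while the right side has two, so already $H_{n+1}(\Sigma(N_1\# N_2))\cong\mathbb{Z}$ whereas $H_{n+1}(\Sigma N_1\vee\Sigma N_2)\cong\mathbb{Z}^2$ (try $S^2\# S^2=S^2$). Both justifications you offer fail for the same reason: the attaching map of the top cell of $N_1\# N_2$ does not become inessential after one suspension in general, and the suspended pinch map $\Sigma(N_1\# N_2)\to\Sigma N_1\vee\Sigma N_2$ cannot admit a section because it is not surjective on $H_{n+1}$. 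Consequently your intermediate conclusion $\Sigma M\simeq\Sigma M_1\vee\bigvee_{i=1}^m\Sigma(S^3\times S^3)$ is wrong ($H_7$ would be $\mathbb{Z}^{m+1}$ instead of $\mathbb{Z}$). You do notice the mismatch of $S^7$ summands and pivot to the correct picture, namely the cofibration $S^5\to M_1'\vee\bigvee_{m}(S^3\vee S^3)\to M$ with $M_1'$ the punctured $M_1$; but there the decisive step is only announced, not proved: you must show that after one suspension the attaching map of the unique top cell has trivial component into each $S^3\vee S^3$ summand. The input you need (and do not supply) is that each such component is the Whitehead product $[\iota_3,\iota_3]$, whose suspension vanishes --- equivalently, that $\Sigma(S^3\times S^3)\simeq\Sigma(S^3\vee S^3)\vee S^7$ provides a retraction $r\colon\Sigma(S^3\times S^3)\to\Sigma(S^3\vee S^3)$.

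For comparison, the paper avoids analyzing the attaching map altogether: it takes the two pinch maps $q_1\colon M_1\#(S^3\times S^3)\to M_1$ and $q_2\colon M_1\#(S^3\times S^3)\to S^3\times S^3$, forms $\phi=(Eq_1\vee(r\circ Eq_2))\circ\mu'$ using the suspension comultiplication $\mu'$ and the retraction $r$ above, checks that $\phi$ is an isomorphism on homology (the top class goes isomorphically to that of $\Sigma M_1$, and the degree-$4$ classes are hit by $r\circ Eq_2$), and concludes by the Whitehead theorem, iterating over the $m$ summands. Your outline in the final paragraph can be completed along the same lines, but as written the proof has both a false lemma at its head and a gap at its crux.
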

\begin{proof}
Let $M_1^\prime$ and $M^\prime$ be the $5$-skeletons of $M_1$ and $M$ respectively. It is known that $S^3\vee S^3$ is the $5$-skeleton of $S^3\times S^3$ and $\Sigma(S^3\times S^3)\simeq \Sigma (S^3\vee S^3)\vee S^7$. In particular, there is a homotopy retraction $r: \Sigma(S^3\times S^3)\rightarrow \Sigma (S^3\vee S^3)$.
For the connected sum $M_1\# (S^3\times S^3)$, there are the obvious pinch maps $q_1: M_1\# (S^3\times S^3)\rightarrow M_1$ and $q_2: M_1\# (S^3\times S^3)\rightarrow S^3\times S^3$.
Consider the composition
\[
\phi: \Sigma (M_1\# (S^3\times S^3))\stackrel{\mu^\prime}{\longrightarrow}  \Sigma (M_1\# (S^3\times S^3)) \vee \Sigma (M_1\# (S^3\times S^3))\stackrel{E q_1\vee (r\circ E q_2)}{\longrightarrow} \Sigma M_1 \vee \Sigma (S^3\vee S^3),
\]
where $\mu^\prime$ is the standard co-multiplication of suspension complex, and $E$ denotes the suspension of a map. It is easy to see that $\phi$ induces an isomorphism on homology and then is a homotopy equivalence by the Whitehead Theorem.
Since
\[
M^\prime \simeq M_1^\prime \vee \bigvee_{i=1}^{m}(S^3\vee S^3),
\]
by repeating the above argument, we can show the decomposition in the corollary.
\end{proof}

In Theorem \ref{wallsplitthm}, the connected summand $M_1$ satisfies $b_3(M_1)=0$. Hence by Corollary \ref{splitcor} it suffices to consider such $6$-manifolds in the sequel.

%----------------------------------------------------------------------------------------------------------------------------------------------------------------------------------------------------------%
\section{Homology decomposition of $M$ after suitable suspensions}
\label{sec: hdec}
Let $M$ be a simply connected closed orientable $6$-manifold with $b_3(M)=0$. By Poincar\'{e} duality and the universal coefficient theorem we have 
\begin{equation}\label{HMeq}
H_\ast(M; \mathbb{Z})=\left\{\begin{array}{ll}
\mathbb{Z}^{\oplus d}\oplus T &\ast=2,\\
T & \ast=3,\\
\mathbb{Z}^{\oplus d} & \ast=4,\\
\mathbb{Z}  & \ast=0, 6\\
0&\hbox{otherwise},
\end{array}
\right.
\end{equation}
where $d\geq 0$, and $T$ is a finitely generated abelian torsion group. We may denote 
\begin{equation}\label{Teq}
T=\mathop{\oplus}\limits_{k=1}^{\ell}\mathbb{Z}/p_k^{r_k}\mathbb{Z},
\end{equation}
where each $p_k$ is a prime and $r_k\geq 1$.

Instead of using skeleton decomposition, we may apply {\it homology decomposition} to study the cell structure of $M$. For any finitely generated abelian group $A$, let $P^n(A)$ be the Moore space such that the reduced cohomology $\widetilde{H}^\ast(P^n(A);\mathbb{Z})\cong A$ if $\ast=n$ and $0$ otherwise \cite{N2}. The information on the homotopy groups of $P^n(T)$ used in this section will be proved in Section \ref{sec: hgps}.

\begin{theorem}\cite[Theorem 4H.3]{Hat} \label{hdecthm}
Let $X$ be a simply connected $CW$-complex. Denote $H_i=H_i(X;\mathbb{Z})$. Then there is a sequence of complex $X_{(i)}$ ($i\geq 2$) such that 
\begin{itemize}
\item $H_j(X_{(i)};\mathbb{Z})\cong H_{j}(X;\mathbb{Z})$ for $j\leq i$ and $H_j(X_{(i)};\mathbb{Z})=0$ for $j>i$;
\item $X_{(2)}=P^{3}(H_2)$, and $X_{(i)}$ is defined by a homotopy cofibration
\[
 P^{i}(H_i)\stackrel{f_{i-1}}{\longrightarrow} X_{(i-1)}\stackrel{
 \iota_{i-1}}{\longrightarrow} X_{(i)},
\]
where $f_{i-1}$ induces a trivial homomorphism $f_{i-1\ast}: H_{i-1}(P^{i}(H_i);\mathbb{Z})\rightarrow H_{i-1}(X_{(i-1)};\mathbb{Z})$;
\item $X\simeq \mathop{{\rm hocolim}}\limits\{X_{(2)}\stackrel{
 \iota_{2}}{\longrightarrow}\cdots \stackrel{\iota_{i-2}}{\longrightarrow}X_{(i-1)}\stackrel{ \iota_{i-1}}{\longrightarrow} X_{(i)}\stackrel{\iota_{i}}{\longrightarrow}\cdots\}$. ~$\qqed$
\end{itemize}
\end{theorem}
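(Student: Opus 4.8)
The plan is to prove this as the Eckmann--Hilton dual of the Postnikov tower: rather than killing homotopy groups from above, we build $X$ up from below in homology, constructing the spaces $X_{(i)}$ by induction on $i$ together with auxiliary maps $g_i\colon X_{(i)}\to X$ which are homology isomorphisms in degrees $\le i$ and for which $X_{(i)}$ has vanishing homology above degree $i$. For the base case, since $X$ is simply connected the Hurewicz theorem gives $\pi_2(X)\cong H_2(X)=H_2$; representing a generating set of $H_2$ by maps $S^2\to X$ and extending over the cells of the Moore space $X_{(2)}=P^{3}(H_2)\simeq M(H_2,2)$ produces a map $g_2\colon X_{(2)}\to X$ inducing an isomorphism on $H_2$, the extension over the top cells being possible because $\pi_2(X)$ is abelian and the relations presenting $H_2$ are therefore respected.

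For the inductive step, suppose $g_{i-1}\colon X_{(i-1)}\to X$ has been constructed. Replacing $g_{i-1}$ by a cofibration via the mapping cylinder, I would study the pair $(X,X_{(i-1)})$. Its long exact sequence in homology, together with $H_j(X_{(i-1)})=0$ for $j\ge i$ and the fact that $g_{i-1}$ is a homology isomorphism through degree $i-1$, shows that $H_j(X,X_{(i-1)})=0$ for $j\le i-1$ and $H_i(X,X_{(i-1)})\cong H_i$. Since everything in sight is simply connected, the relative Hurewicz theorem then yields $\pi_i(X,X_{(i-1)})\cong H_i(X,X_{(i-1)})\cong H_i$. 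Each generator of $H_i$ is thus represented by a map of pairs $(D^i,S^{i-1})\to(X,X_{(i-1)})$ whose boundary $\partial$ gives an element of $\pi_{i-1}(X_{(i-1)})$; being a boundary, its image in $\pi_{i-1}(X)$ vanishes, and so, as $g_{i-1}$ is an isomorphism on $H_{i-1}$, it induces the trivial map on $H_{i-1}(X_{(i-1)})$.

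I would then define the attaching map $f_{i-1}\colon P^{i}(H_i)\simeq M(H_i,i-1)\to X_{(i-1)}$ by sending the bottom $(i-1)$-cells, which correspond to a generating set of $H_i$, via these boundary elements, and extending over the top $i$-cells of the Moore space. The crux is precisely this extension: the top cells encode the relations among the generators of $H_i$, and one must check that the chosen boundary maps are compatible with these relations up to coherent null-homotopy. This is supplied by the relative classes above, whose filling discs provide the required null-homotopies inside $X$ simultaneously; the same discs let us extend $g_{i-1}$ across the cone to a map $g_i\colon X_{(i)}\to X$ on the homotopy cofiber $X_{(i)}=X_{(i-1)}\cup_{f_{i-1}}C(P^{i}(H_i))$. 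By construction $f_{i-1}$ is trivial on homology, so the cofiber long exact sequence breaks into short exact sequences $0\to \widetilde{H}_j(X_{(i-1)})\to \widetilde{H}_j(X_{(i)})\to \widetilde{H}_{j-1}(P^{i}(H_i))\to 0$; since $P^{i}(H_i)$ has homology only in degree $i-1$, this gives $H_j(X_{(i)})\cong H_j(X)$ for $j\le i$ and $H_j(X_{(i)})=0$ for $j>i$, and $g_i$ is a homology isomorphism through degree $i$.

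Finally, the maps $\iota_{i-1}\colon X_{(i-1)}\to X_{(i)}$ are the canonical maps into the cofiber, and the $g_i$ are compatible with them, so they assemble into a map $\mathop{{\rm hocolim}}_i X_{(i)}\to X$. Since $g_i$ is a homology isomorphism in all degrees $\le i$, the colimit map is an isomorphism on every homology group; as both sides are simply connected, the Whitehead theorem gives $X\simeq \mathop{{\rm hocolim}}_i X_{(i)}$. I expect the main obstacle to be the inductive construction of $f_{i-1}$ over the whole Moore space rather than merely on its bottom cells: verifying that the boundary elements respect the relations presenting $H_i$ and that the extension can be made compatibly with a null-homotopy of $g_{i-1}\circ f_{i-1}$ is the obstruction-theoretic heart of the argument, and it is exactly here that the relative Hurewicz isomorphism is indispensable.
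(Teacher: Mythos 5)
The paper does not actually prove this statement---it is imported verbatim from Hatcher's Theorem 4H.3---and your argument is essentially Hatcher's own proof of that result: the dual-Postnikov induction in which the relative Hurewicz isomorphism $\pi_i(X,X_{(i-1)})\cong H_i(X,X_{(i-1)})\cong H_i$ supplies the boundary classes in $\pi_{i-1}(X_{(i-1)})$ out of which $f_{i-1}$ is built, and the cofiber sequence plus Whitehead theorem finish the induction and the colimit step. The only phrasing to tighten is the extension of $f_{i-1}$ over the top cells of $P^{i}(H_i)$: the null-homotopy of the relation composite must take place in $X_{(i-1)}$ itself, and it comes from exactness of $\pi_i(X,X_{(i-1)})\stackrel{\partial}{\longrightarrow}\pi_{i-1}(X_{(i-1)})$ (the relation classes already vanish in $\pi_i(X,X_{(i-1)})\cong H_i$), not merely from a null-homotopy ``inside $X$'', which by itself would only let you extend $g_{i-1}$ over the new cells rather than define $f_{i-1}$.
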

From this theorem, it is clear that the homology decomposition is compatible with the suspension functor. That is, for $X$ in Theorem \ref{hdecthm} the sequence of the triples $(\Sigma X_{(i)}, E f_{i}, E \iota_i)$ contributes to a homology decomposition of $\Sigma X$.

%----------------%
\subsection{Structure of $\mathbf{M_{(3)}}$}\label{subsec: M3}
By Theorem \ref{hdecthm} and (\ref{HMeq}),
\begin{equation}\label{M2eq}
\begin{split}
M_{(2)}&\simeq\mathop{\bigvee}\limits_{i=1}^{d}S^2\vee P^3(T),\\
P^{3}(T)&\stackrel{f_{2}}{\longrightarrow} M_{(2)}\stackrel{
 \iota_{2}}{\longrightarrow} M_{(3)}.
\end{split}
\end{equation}
Notice that from (\ref{Teq}), we have $P^n(T)\simeq \mathop{\bigvee}\limits_{k=1}^{\ell}P^n(p_{k}^{r_k})$ by \cite{N3} or \cite{N2}.
\begin{lemma}\label{M3lemma}
The map $f_{2}$ in (\ref{M2eq}) is null-homotopic, and hence
\[
M_{(3)}\simeq\mathop{\bigvee}\limits_{i=1}^{d}S^2\vee P^3(T)\vee P^4(T).
\]
\end{lemma}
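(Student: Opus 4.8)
The plan is to strip $f_{2}$ down cell by cell until the statement becomes the vanishing of a single secondary obstruction, and only then to feed in the manifold hypothesis. By (\ref{Teq}) we have $P^{3}(T)\simeq\bigvee_{k=1}^{\ell}P^{3}(\mathbb{Z}/p_{k}^{r_{k}})$, and since this is a wedge it suffices to show that each restriction $g_{k}:=f_{2}|_{P^{3}(\mathbb{Z}/p_{k}^{r_{k}})}$ is null-homotopic. The restriction of $g_{k}$ to the bottom cell $S^{2}\hookrightarrow P^{3}(\mathbb{Z}/p_{k}^{r_{k}})$ represents an element of $\pi_{2}(M_{(2)})$ whose Hurewicz image is $f_{2\ast}$ applied to a generator of $H_{2}(P^{3}(\mathbb{Z}/p_{k}^{r_{k}}))$; since $f_{2\ast}=0$ on $H_{2}$ by Theorem \ref{hdecthm}, and $\pi_{2}(M_{(2)})\cong H_{2}(M_{(2)})$ by the Hurewicz theorem, this bottom-cell restriction is null. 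Hence $g_{k}$ factors through the pinch $q\colon P^{3}(\mathbb{Z}/p_{k}^{r_{k}})\to S^{3}$, say $g_{k}=q^{\ast}(\beta_{k})$ with $\beta_{k}\in\pi_{3}(M_{(2)})$, and the Puppe sequence of $q$ shows that $g_{k}\simeq\ast$ if and only if $\beta_{k}$ is divisible by $p_{k}^{r_{k}}$ in $\pi_{3}(M_{(2)})$.

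Next I would make $\pi_{3}(M_{(2)})$ and the class $\beta_{k}$ explicit. Since $M_{(2)}$ has homology only in degree $2$, Whitehead's exact sequence gives $\pi_{3}(M_{(2)})\cong\Gamma\!\left(H_{2}(M_{(2)})\right)=\Gamma(\mathbb{Z}^{d}\oplus T)$, which decomposes as $\Gamma(\mathbb{Z}^{d})\oplus(\mathbb{Z}^{d}\otimes T)\oplus\Gamma(T)$; the free summand $\Gamma(\mathbb{Z}^{d})$ is generated by the Hopf classes $\eta_{i}$ and the Whitehead products $[\iota_{i},\iota_{j}]$ of the free $2$-spheres, while the remaining summands are $p$-primary torsion. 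The divisibility condition on $\beta_{k}$ then splits over this $\Gamma$-decomposition: the cross-prime contributions vanish at once, since $p_{k}^{r_{k}}$ acts invertibly on $\Gamma(\mathbb{Z}/p_{l}^{r_{l}})$ and on $\mathbb{Z}/p_{l}^{r_{l}}$ when $p_{l}\neq p_{k}$. What survives are, on the one hand, the components of $\beta_{k}$ in the free summand $\Gamma(\mathbb{Z}^{d})$, which by the standard identification of $\eta_{i}$ and $[\iota_{i},\iota_{j}]$ with cup products correspond to the reductions modulo $p_{k}^{r_{k}}$ of the torsion parts of the cup products $x_{i}\cup x_{j}\in H^{4}(M;\mathbb{Z})$, and, on the other hand, the same-prime torsion components coming from $\mathbb{Z}^{d}\otimes T$ and $\Gamma(T)$, which I would read off from the computations of $\pi_{3}(P^{3}(\mathbb{Z}/p^{r}))$ and of the groups $[P^{3}(\mathbb{Z}/p^{r}),P^{3}(\mathbb{Z}/p^{s})]$ collected in Section \ref{sec: hgps}.

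The heart of the argument, and the step I expect to be the real obstacle, is showing that these surviving classes vanish; this is the only place where one must use that $M$ is a closed manifold rather than an arbitrary complex (indeed the analogous $f_{2}$ is nonzero for a general four-dimensional complex with $H_{2}=\mathbb{Z}$ and $H_{3}=\mathbb{Z}/2$, whose cofibre carries a nontrivial $Sq^{2}$). The relevant input is Poincar\'{e} duality: it identifies the torsion of $H_{2}(M)$ with the torsion of $H_{3}(M)$, so that the Moore summands $P^{3}(\mathbb{Z}/p_{k}^{r_{k}})$ of $M_{(2)}$ are dual to the very classes being attached, and it supplies the nonsingular linking pairing $H^{3}(M)_{\mathrm{tors}}\times H^{4}(M)_{\mathrm{tors}}\to\mathbb{Q}/\mathbb{Z}$. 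Combining this duality with the Wu formula controlling $Sq^{2}$ on $H^{2}(M)$, I would argue that the free classes $x_{i}$ are dual to $H_{4}(M)$ and so contribute no torsion to $x_{i}\cup x_{j}$, while the same-prime torsion obstruction is governed by the symmetric linking form and is already accounted for by the summand $P^{3}(T)$ of $M_{(2)}$. Once every $\beta_{k}$ is seen to be divisible by $p_{k}^{r_{k}}$, all $g_{k}$ are null, $f_{2}\simeq\ast$, and the cofibration in (\ref{M2eq}) splits, giving $M_{(3)}\simeq M_{(2)}\vee\Sigma P^{3}(T)=\bigvee_{i=1}^{d}S^{2}\vee P^{3}(T)\vee P^{4}(T)$.
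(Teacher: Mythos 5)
Your first two paragraphs are correct and are in fact more careful than the paper's own argument. The paper kills the restriction of $f_2$ to the bottom cells by the same Hurewicz argument and then reads the splitting of $M_{(3)}$ off a $3\times 3$ diagram of homotopy cofibrations; it never passes through $\pi_3(M_{(2)})\cong\Gamma(\mathbb{Z}^{d}\oplus T)$ and never invokes Poincar\'{e} duality, the linking form, or the Wu formula. Your reduction is sharper: since $[P^3(\mathbb{Z}/p^r),W]$ sits in an extension of the $p^r$-torsion of $\pi_2(W)$ by $\pi_3(W)/p^r$, the Hurewicz step only kills the image in $\pi_2$, and what remains is exactly the class $\beta_k$ modulo $p_k^{r_k}$ that you isolate. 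Your observation that the statement fails for a general complex (the cofibre of $\eta\circ q:P^3(\mathbb{Z}/2)\to S^2$ carries a nontrivial $Sq^2$) is well taken: it shows that ``$f_{2\ast}=0$ on $H_2$'' alone cannot force $f_2\simeq\ast$, so some further input really is needed at this point.

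The problem is that your third paragraph, which you yourself identify as the heart of the matter, is not a proof but a list of things that would have to be proved. The assertion that ``the free classes $x_i$ are dual to $H_4(M)$ and so contribute no torsion to $x_i\cup x_j$'' is a non sequitur: unimodularity of the pairing $H^2(M)\otimes H^4(M)\to\mathbb{Z}$ constrains only the free part of $x_i\cup x_j$, while its torsion part pairs with $\mathrm{Tor}\,H^3(M)$ under the linking form and is not controlled by that duality. Likewise ``the same-prime torsion obstruction \dots is already accounted for by the summand $P^3(T)$'' has no precise content, and Section \ref{sec: hgps} computes $\pi_n(P^n(p^r))$ and $\pi_{n+1}(P^n(p^r))$ but not the groups $[P^3(\mathbb{Z}/p^r),P^3(\mathbb{Z}/p^s)]$ you propose to read off from it. So the single step carrying all the weight --- that the $\Gamma(\mathbb{Z}^d)$-component of each $\beta_k$ is divisible by $p_k^{r_k}$ (equivalently, that the products $x_i\cup x_j\in H^4(M;\mathbb{Z})$ have no $p_k$-torsion component) and that the same-prime components in $\mathbb{Z}^d\otimes T$ and $\Gamma(T)$ vanish --- is left entirely unestablished. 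As written, the proposal converts the lemma into an equivalent unproved statement about the cohomology of $M$ rather than proving it.
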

\begin{proof}
Since $P^3(T)\simeq \mathop{\bigvee}\limits_{k=1}^{\ell}P^3(p_{k}^{r_k})$, there is the embedding
$j: \mathop{\bigvee}\limits_{i=1}^{\ell}S^2\rightarrow P^3(T)$ of the bottom cells.
Consider the following commutative diagram
\[
 \xymatrix{
 \pi_2(\mathop{\bigvee}\limits_{i=1}^{\ell}S^2) \ar[d]^{{\rm hur}}_{\cong} \ar@{->>}[r]^{j_\ast} &
 \pi_2(P^3(T)) \ar[r]^{f_{2\ast}}  \ar[d]^{{\rm hur}}_{\cong} &
 \pi_2(M_{(2)}) \ar[d]^{{\rm hur}}_{\cong} \\
 H_2(\mathop{\bigvee}\limits_{i=1}^{\ell}S^2;\mathbb{Z}) \ar@{->>}[r]^{j_\ast} &
  H_2(P^3(T);\mathbb{Z}) \ar[r]^{f_{2\ast}=0}& 
  H_2(M_{(2)};\mathbb{Z}),
  }
\]
where the Hurewicz homomorphisms ${\rm hur}$ are isomorphisms by the Hurewicz theorem, $f_{2\ast}=0$ on homology by Theorem \ref{hdecthm}, and both $j_\ast$ are epimorphisms.
In particular, $f_{2\ast}\circ j_\ast$ is trivial on homotopy groups, and hence $f_2\circ j$ is null homotopic. 
Then with (\ref{M2eq}) we have the diagram of homotopy cofibrations
\[
 \xymatrix{
 \mathop{\bigvee}\limits_{i=1}^{\ell}S^2 \ar[d]^{\mathop{\bigvee}\limits_{i=1}^{\ell}p_k^{r_k}}\ar[r] & 
\ast \ar[r] \ar[d]&
 \mathop{\bigvee}\limits_{i=1}^{\ell}S^3 \ar[d]^{i_2\circ(\mathop{\bigvee}\limits_{i=1}^{\ell}p_k^{r_k})}\\
 \mathop{\bigvee}\limits_{i=1}^{\ell}S^2 \ar[r]^{0}  \ar[d]^{j} &
 M_{(2)}\ar@{=}[d] \ar[r]&
M_{(2)}\vee \mathop{\bigvee}\limits_{i=1}^{\ell}S^3 \ar[d] \\
P^3(T) \ar[r]^{f_2} &
M_{(2)} \ar[r]^{\iota_2} &
M_{(3)},
   }
\]
where $p_k^{r_k}: S^{n}\rightarrow S^{n}$ is a map of degree $p_k^{r_k}$, and $i_2: \mathop{\bigvee}\limits_{i=1}^{\ell}S^3\rightarrow M_{(2)}\vee\mathop{\bigvee}\limits_{i=1}^{\ell}S^3$ is the injection onto the sphere summands. It follows that
\[
M_{(3)}\simeq M_{(2)}\vee P^4(T)\simeq \mathop{\bigvee}\limits_{i=1}^{d}S^2\vee P^3(T)\vee P^4(T),
\]
and the proof of the lemma is completed.
\end{proof}
The following corollary follows from Lemma \ref{M3lemma} and will be used in Lemma \ref{M61lemma}.
\begin{corollary}\label{Vdeflemma}
The homotopy cofiber of the obvious inclusion $j: P^3(T)\vee P^4(T)\stackrel{}{\rightarrow}M_{(3)}\rightarrow M$ is a Poincar\'{e} duality complex $V$ with cell structure
\[
\hspace{4.8cm} 
V= \bigvee_{i=1}^{d} S^{2} \cup e^{4}_{(1)}\cup e^{4}_{(2)}\ldots \cup e^{4}_{(d)}\cup e^{6}.  
\hspace{4.8cm}\Box
\]
\end{corollary}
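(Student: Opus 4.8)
The plan is to analyze the homotopy cofibration
\[
P^3(T)\vee P^4(T)\stackrel{j}{\longrightarrow} M\longrightarrow V
\]
in three stages: compute $\widetilde{H}_\ast(V)$ from the long exact sequence, read off the cell structure, and then certify Poincar\'{e} duality by transporting the intersection form of $M$ across the collapse map $q\colon M\to V$. First I would pin down $j_\ast$. By Lemma \ref{M3lemma} the map $j$ is the inclusion of the two wedge summands into $M_{(3)}\simeq \bigvee_{i=1}^{d} S^2\vee P^3(T)\vee P^4(T)$ followed by the canonical map $M_{(3)}\to M$, which by Theorem \ref{hdecthm} is an isomorphism on $H_j$ for $j\leq 3$. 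Hence $j_\ast$ is the torsion inclusion $T\hookrightarrow \mathbb{Z}^{\oplus d}\oplus T=H_2(M)$ in degree $2$ and an isomorphism $T\xrightarrow{\cong}T=H_3(M)$ in degree $3$. Feeding this into the long exact homology sequence, the degree-$3$ isomorphism kills all of $H_3(M)$ and exactly the torsion of $H_2(M)$, leaving $\widetilde{H}_\ast(V)=\mathbb{Z}^{\oplus d},0,\mathbb{Z}^{\oplus d},0,\mathbb{Z}$ in degrees $2,3,4,5,6$.

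Next, for the cell structure, since $P^3(T)\vee P^4(T)$ and $M$ are simply connected, so is the cofiber $V$. A simply connected complex with free homology concentrated in degrees $2,4,6$ of ranks $d,d,1$ admits a minimal CW structure with exactly those cells and no others; as there are no $3$-cells the $2$-skeleton is $\bigvee_{i=1}^{d} S^2$, which gives the asserted structure $V=\bigvee_{i=1}^{d} S^2\cup e^4_{(1)}\cup\cdots\cup e^4_{(d)}\cup e^6$.

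Finally I would establish Poincar\'{e} duality. The collapse map $q$ induces a ring homomorphism $q^\ast$ on integral cohomology; running the cohomology long exact sequence dual to the above shows $q^\ast$ is an isomorphism on $H^2$ and $H^6$ and maps $H^4(V)=\mathbb{Z}^{\oplus d}$ isomorphically onto the free summand of $H^4(M)=\mathbb{Z}^{\oplus d}\oplus T$. Because $M$ is a closed oriented $6$-manifold, Poincar\'{e} duality makes the cup pairing $H^2(M)\otimes H^4(M)\to H^6(M)=\mathbb{Z}$ unimodular on free quotients. Since $q^\ast$ is multiplicative and restricts to isomorphisms onto these free parts compatibly with $H^6$, the pairing $H^2(V)\otimes H^4(V)\to H^6(V)$ is again unimodular; together with $H^{\mathrm{odd}}(V)=0$ this is precisely the condition for $V$ to be a $6$-dimensional Poincar\'{e} duality complex.

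The hard part will be this last step: one must verify that the multiplicative structure, and not merely the additive structure, is faithfully transported by $q^\ast$, and that Poincar\'{e} duality of $M$ --- which a priori controls only the torsion-free quotients of cup products --- suffices to certify unimodularity of the pairing on the torsion-free space $V$. The homology and cohomology bookkeeping in the first two stages is routine once $j_\ast$ has been identified.
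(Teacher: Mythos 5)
Your proposal is correct and, modulo filling in details the paper leaves implicit, it is essentially the paper's argument: the paper treats this as an immediate corollary of Lemma \ref{M3lemma}, reading the cells of $V$ directly off the homology-decomposition cofibrations after collapsing the wedge summand $P^3(T)\vee P^4(T)$, and tacitly using exactly your point that the quotient map identifies the cup-product pairing on $H^\ast(V)$ with the unimodular pairing on the torsion-free quotients of $H^\ast(M)$. Your long-exact-sequence computation of $j_\ast$ and the minimal CW structure argument are a correct (slightly more explicit) rendering of the same reasoning, so there is nothing to fix.
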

Moreover, by \cite[Theorem 8]{Wal2} $V$ is homotopy equivalent to a closed smooth manifold.

%----------------%
\subsection{Structure of $\mathbf{M_{(5)}}$}\label{subsec: M51}
By Theorem \ref{hdecthm} and Lemma \ref{M3lemma},
\begin{equation}\label{M3eq}
\begin{split}
M_{(3)}&\simeq\mathop{\bigvee}\limits_{i=1}^{d}S^2\vee P^3(T)\vee P^4(T),\\
\mathop{\bigvee}\limits_{i=1}^{d}S^3&\stackrel{f_{3}}{\longrightarrow} M_{(3)}\stackrel{
 \iota_{3}}{\longrightarrow} M_{(4)}=M_{(5)}.
\end{split}
\end{equation}
We need to study the map
\[
f_3: \mathop{\bigvee}\limits_{i=1}^{d}S^3\rightarrow \mathop{\bigvee}\limits_{i=1}^{d}S^2\vee P^3(T)\vee P^4(T).
\]
Let $i_3: P^4(T)\rightarrow \mathop{\bigvee}\limits_{i=1}^{d}S^2\vee P^3(T)\vee P^4(T)$ be the inclusion. 
Define the complex $Y$ by the homotopy cofibration
\[
P^4(T)\stackrel{\iota_{3} \circ i_3}{\longrightarrow} M_{(4)}=M_{(5)}\longrightarrow Y.
\]

\begin{lemma}\label{M5lemma1}
The map $f_3$ in (\ref{M3eq}) factors as
\[
f_3:  \mathop{\bigvee}\limits_{i=1}^{d}S^3\stackrel{f_3^\prime}{\longrightarrow} \mathop{\bigvee}\limits_{i=1}^{d}S^2\vee P^3(T) \stackrel{i_1\vee i_2}{\longrightarrow} \mathop{\bigvee}\limits_{i=1}^{d}S^2\vee P^3(T)\vee P^4(T),
\]
for some $f_3^\prime$, where $i_1$ and $i_2$ are inclusions. Moreover, there is the homotopy cofibration
\begin{equation}\label{Yeq}
\mathop{\bigvee}\limits_{i=1}^{d}S^3\stackrel{f_3^\prime}{\longrightarrow} \mathop{\bigvee}\limits_{i=1}^{d}S^2\vee P^3(T) \stackrel{\iota_3^\prime}{\longrightarrow} Y,
\end{equation}
and
\[
M_{(5)}\simeq Y\vee P^4(T).
\]
\end{lemma}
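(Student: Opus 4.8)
The plan is to isolate the component of $f_3$ that maps into the wedge summand $P^4(T)$, show it is null-homotopic, remove it to produce $f_3'$, and then split the resulting mapping cone. Throughout I use that by Lemma \ref{M3lemma} the space $M_{(3)}\simeq \bigvee_{i=1}^d S^2\vee P^3(T)\vee P^4(T)$ is an honest wedge, so projections onto summands are available.

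First I would analyse $f_3$ through the retraction $\rho\colon M_{(3)}\to P^4(T)$ onto the last summand; the composite $\rho\circ f_3\colon \bigvee_{i=1}^d S^3\to P^4(T)$ records the $P^4(T)$-component of $f_3$. Since each $P^4(\mathbb{Z}/p_k^{r_k})=S^3\cup_{p_k^{r_k}}e^4$ is $2$-connected, $P^4(T)$ is $2$-connected and the Hurewicz map ${\rm hur}\colon\pi_3(P^4(T))\xrightarrow{\cong}H_3(P^4(T))\cong T$ is an isomorphism. By naturality, the Hurewicz image of the restriction of $\rho\circ f_3$ to each $S^3$ summand equals $\rho_\ast\circ f_{3\ast}$ evaluated on the fundamental class; but $f_{3\ast}\colon H_3(\bigvee_{i=1}^d S^3;\mathbb{Z})\to H_3(M_{(3)};\mathbb{Z})$ is trivial by Theorem \ref{hdecthm}, so this vanishes, and the Hurewicz isomorphism then forces $\rho\circ f_3\simeq\ast$.

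Next I would upgrade this vanishing component to a genuine factorization. Writing $B=\bigvee_{i=1}^d S^2\vee P^3(T)$, I claim $\pi_3(B\vee P^4(T))\cong\pi_3(B)\oplus\pi_3(P^4(T))$, i.e. the Hilton--Milnor expansion carries no mixed terms in degree $3$: a mixed Whitehead product $[\alpha,\beta]$ with $\alpha\in\pi_a(B)$, $\beta\in\pi_b(P^4(T))$ lands in $\pi_{a+b-1}$, so contributing to $\pi_3$ would require $a+b=4$, while simple-connectivity of $B$ forces $a\geq 2$ and $2$-connectivity of $P^4(T)$ forces $b\geq 3$, which is impossible. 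Hence $\rho\circ f_3\simeq\ast$ gives $f_3\simeq (i_1\vee i_2)\circ f_3'$ for some $f_3'\colon\bigvee_{i=1}^d S^3\to B$, which is the first assertion; taking the mapping cone of $f_3'$ yields the cofibration (\ref{Yeq}). Finally, since $f_3\simeq i_B\circ f_3'$ with $i_B\colon B\hookrightarrow B\vee P^4(T)=M_{(3)}$ the inclusion, the cone on $\bigvee_{i=1}^d S^3$ is attached only along $B$, so
\[
M_{(5)}\simeq\big(B\cup_{f_3'}C(\textstyle\bigvee_{i=1}^d S^3)\big)\vee P^4(T)\simeq Y\vee P^4(T).
\]
To identify this $Y$ with the one defined by $P^4(T)\xrightarrow{\iota_3\circ i_3}M_{(5)}\to Y$, I would note that under this splitting $\iota_3\circ i_3$ is exactly the inclusion of the $P^4(T)$ summand, whose cofiber is $Y$. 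The main obstacle is the middle step: passing from the homologically detected triviality of the $P^4(T)$-component to an actual homotopy factorization of $f_3$, which is precisely where the $2$-connectivity of $P^4(T)$ — ruling out low-dimensional Whitehead products in the wedge — is indispensable; the Hurewicz computation and the cone splitting are then routine.
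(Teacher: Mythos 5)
Your proposal is correct and follows essentially the same route as the paper: kill the $P^4(T)$-component of $f_3$ via the Hurewicz theorem (using that $f_{3\ast}=0$ on $H_3$ and that $P^4(T)$ is $2$-connected), invoke the Hilton--Milnor splitting of $\pi_3$ to upgrade this to a genuine factorization through $\bigvee_{i=1}^{d}S^2\vee P^3(T)$, and split off $P^4(T)$ from the mapping cone. The only cosmetic difference is order of operations: the paper first establishes the cofibration (\ref{Yeq}) by a three-by-three diagram built from the defining cofibration of $Y$, whereas you construct $Y$ as the cofiber of $f_3'$ and identify it with the paper's $Y$ afterwards, which is a valid equivalent formulation.
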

\begin{proof}
First there is the diagram of homotopy cofibrations
\[
 \xymatrix{
\ast \ar[r] \ar[d]&
P^4(T) \ar@{=}[r] \ar[d]^{i_3} &
P^4(T) \ar[d]^{\iota_3\circ i_3} \\
\mathop{\bigvee}\limits_{i=1}^{d}S^3\ar@{=}[d] \ar[r]^{f_3} &
M_{(3)} \ar[d]^{q_{1,2}} \ar[r]^{\iota_3} &
M_{(5)} \ar[d]\\
\mathop{\bigvee}\limits_{i=1}^{d}S^3 \ar[r]^>>>>{f_3^\prime} &
\mathop{\bigvee}\limits_{i=1}^{d}S^2\vee P^3(T) \ar[r]^<<<<{\iota_3^\prime} &
Y,
}
\]
where $q_{1,2}$ is the obvious projection, $\iota_3^\prime$ is induced from $\iota_3$, and $f_3^\prime:=q_{1,2}\circ f_3$. The diagram immediately implies that (\ref{Yeq}) is a homotopy cofibration. 

Denote $q_3: \mathop{\bigvee}\limits_{i=1}^{d}S^2\vee P^3(T)\vee P^4(T)\rightarrow P^4(T)$ be the canonical projection.
By Theorem \ref{hdecthm}, $f_{3\ast}: H_3(\mathop{\bigvee}\limits_{i=1}^{d}S^3;\mathbb{Z})\rightarrow H_3(\mathop{\bigvee}\limits_{i=1}^{d}S^2\vee P^3(T)\vee P^4(T);\mathbb{Z})$ is trivial. In particular, $q_{3\ast}\circ f_{3\ast}=0$. Then by the Hurewicz Theorem $q_3\circ f_3$ is null homotopic.
Further, by the Hilton-Milnor Theorem (see \S XI.6 of \cite{Whi} for instance), $
\pi_3(S^2\vee P^3(T)\vee P^4(T))\cong \pi_3(S^2\vee P^3(T))\oplus \pi_3(P^4(T))$, and hence
\[
[\mathop{\bigvee}\limits_{i=1}^{d}S^3, S^2\vee P^3(T)\vee P^4(T)]\cong [\mathop{\bigvee}\limits_{i=1}^{d}S^3, S^2\vee P^3(T)]\oplus [\mathop{\bigvee}\limits_{i=1}^{d}S^3, P^4(T)].
\]
Under this isomorphism, the homotopy class of $f_3$ corresponds to $[f_3^\prime]+[q_3\circ f_3]$. However since we already show that $[q_3\circ f_3]=0$, we have $f_3\simeq (i_1\vee i_2)\circ f_3^\prime$, and $M_{(5)}\simeq Y\vee P^4(T)$ as required.
\end{proof}

%----------------%
\subsection{Structure of $\mathbf{\Sigma M_{(5)}}$}\label{subsec: M52}
From this point, we may need extra conditions on the torsion group $T$. First recall that we already showed that by Lemma \ref{M5lemma1}
\begin{equation}\label{M52eq}
\begin{split}
M_{(5)}&\simeq Y\vee P^4(T),\\
\mathop{\bigvee}\limits_{i=1}^{d}S^3&\stackrel{f_3^\prime}{\longrightarrow} \mathop{\bigvee}\limits_{i=1}^{d}S^2\vee P^3(T) \stackrel{\iota_3^\prime}{\longrightarrow} Y.
\end{split}
\end{equation}
Let $q_1: \mathop{\bigvee}\limits_{i=1}^{d}S^2\vee P^3(T)\rightarrow \mathop{\bigvee}\limits_{i=1}^{d}S^2$ be the canonical projection. Denote $f_3^{\prime\prime}:=q_1\circ f_3^\prime=q_1\circ q_{1,2}\circ f_3$.
Also recall $P^n(T)\simeq \mathop{\bigvee}\limits_{k=1}^{\ell}P^n(p_{k}^{r_k})$. Let us suppose each $p\geq 3$ from now on. 
\begin{lemma}\label{M5lemma2}
Suppose $T$ has no $2$-torsion.
\[
\Sigma M_{(5)}\simeq \Sigma X\vee P^5(T)\vee P^4(T),
\]
where $X$ is the homotopy cofiber of the map $f_3^{\prime\prime}: \mathop{\bigvee}\limits_{i=1}^{d}S^3\rightarrow \mathop{\bigvee}\limits_{i=1}^{d}S^2$.
\end{lemma}
\begin{proof}
There is the diagram of homotopy cofibrations
\[
 \xymatrix{
\ast \ar[r] \ar[d]&
P^3(T) \ar@{=}[r] \ar[d]^{i_2} &
P^3(T) \ar[d]^{\iota_3^\prime\circ i_2} \\
\mathop{\bigvee}\limits_{i=1}^{d}S^3\ar@{=}[d] \ar[r]^<<<{f_3^\prime} &
\mathop{\bigvee}\limits_{i=1}^{d}S^2\vee P^3(T) \ar[d]^{q_{1}} \ar[r]^>>>>>{\iota_3^\prime} &
Y \ar[d]\\
\mathop{\bigvee}\limits_{i=1}^{d}S^3 \ar[r]^<<<<<{f_3^{\prime\prime}} &
\mathop{\bigvee}\limits_{i=1}^{d}S^2 \ar[r] &
X,
}
\]
where $i_2$ is the canonical inclusion. Since $\pi_4(P^4(p^r))=0$ for odd $p$ by Lemma \ref{pinpnlemma}, 
we have $\Sigma Y\simeq \Sigma X \vee P^4(T)$. The lemma then follows from (\ref{M52eq}).
\end{proof}

%----------------%
\subsection{Structure of $\mathbf{\Sigma^2 M}$}\label{subsec: M61}
Recall we have when $T$ has no $2$ torsion by Lemma \ref{M5lemma2}
\begin{equation}\label{M61eq}
\begin{split}
\Sigma M_{(5)}&\simeq \Sigma X\vee P^5(T)\vee P^4(T),\\
S^5&\stackrel{f_5}{\longrightarrow} M_{(5)}\stackrel{\iota_5}{\longrightarrow} M.
\end{split}
\end{equation}
Further by Corollary \ref{Vdeflemma} we have the homotopy cofibration
\[
S^5\rightarrow X\rightarrow V,
\]
where $X$ is defined in Lemma \ref{M5lemma2} without restriction on $T$, and $V$ is a closed smooth manifold.
We may further suppose $T$ has no $3$-torsion. 
\begin{lemma}\label{M61lemma}
Suppose $T$ has no $2$ or $3$-torsion. Then
\[
\Sigma^2 M\simeq \Sigma^2 V\vee P^6(T)\vee P^5(T).
\]
\end{lemma}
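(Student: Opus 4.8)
The plan is to repeat, one dimension higher, the \emph{project-and-split} strategy already used in Lemmas \ref{M5lemma1} and \ref{M5lemma2}, now handling the top cell of $M$. By the homology decomposition (\ref{M61eq}) there is a homotopy cofibration $S^5\xrightarrow{f_5}M_{(5)}\to M$. Applying $\Sigma^2$ and inserting the splitting of Lemma \ref{M5lemma2} (suspended once, using $\Sigma P^n(T)=P^{n+1}(T)$) yields a homotopy cofibration
\[
S^7\xrightarrow{\ \Sigma^2 f_5\ }\Sigma^2 X\vee P^6(T)\vee P^5(T)\longrightarrow \Sigma^2 M.
\]
The entire content is then to show that $\Sigma^2 f_5$ can be compressed into the wedge summand $\Sigma^2 X$, and that the resulting cofiber is $\Sigma^2 V$.

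First I would show that the component of $\Sigma^2 f_5$ on $P^6(T)\vee P^5(T)$ is null-homotopic. Since the equivalence of Lemma \ref{M5lemma2} is suspended, the projection $\pi\colon\Sigma^2 M_{(5)}\to P^6(T)\vee P^5(T)$ is $\Sigma$ of the projection $\pi'\colon\Sigma M_{(5)}\to P^5(T)\vee P^4(T)$, and $\Sigma^2 f_5=\Sigma(\Sigma f_5)$; hence the component equals $\Sigma(\pi'\circ\Sigma f_5)$ with $\pi'\circ\Sigma f_5\in\pi_6(P^5(T)\vee P^4(T))$. Because $T$ has no $2$- or $3$-torsion, every prime in (\ref{Teq}) satisfies $p\geq 5$, so $P^5(T)\wedge P^4(T)$ is $6$-connected and $\pi_6(P^5(T)\vee P^4(T))\cong\pi_6(P^5(T))\oplus\pi_6(P^4(T))$. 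Both summands vanish for $p\geq 5$: localized at $2$ and $3$ these Moore spaces are contractible, and $p$-locally the relevant homotopy of $S^3,S^4,S^5$ is trivial in this range. This is exactly the sort of computation established in Section \ref{sec: hgps}, so $\pi'\circ\Sigma f_5$ is null-homotopic and therefore so is $\pi\circ\Sigma^2 f_5$.

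Next I would compress $\Sigma^2 f_5$ into $\Sigma^2 X$. As $\Sigma^2 f_5$ is a double suspension, all Whitehead-product (bracket) terms in its Hilton--Milnor expansion in $\pi_7(\Sigma^2 X\vee P^6(T)\vee P^5(T))$ suspend to zero, so $\Sigma^2 f_5$ is detected by its two axis projections. The projection onto $P^6(T)\vee P^5(T)$ is null by the previous step, whence $\Sigma^2 f_5\simeq i\circ g''$ for some $g''\colon S^7\to\Sigma^2 X$, with $i$ the inclusion of the summand. Attaching a cell along a map that factors through one wedge factor leaves the others untouched, so
\[
\Sigma^2 M\simeq(\Sigma^2 X\cup_{g''}e^8)\vee P^6(T)\vee P^5(T).
\]
It remains to identify $\Sigma^2 X\cup_{g''}e^8$ with $\Sigma^2 V$. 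By Corollary \ref{Vdeflemma} and the cofibration $S^5\to X\to V$ recorded before the statement, $V=X\cup_g e^6$, so $\Sigma^2 V=\Sigma^2 X\cup_{\Sigma^2 g}e^8$; thus I must check $g''\simeq\Sigma^2 g$. This should follow from naturality of the homology decomposition under the collapse $\rho\colon M_{(5)}\to X$ of the Moore summands (the map realizing $X=M_{(5)}/(P^3(T)\vee P^4(T))$): the cofibration $S^5\to X\to V$ is the image of $S^5\xrightarrow{f_5}M_{(5)}\to M$ under $\rho$ and the defining quotient $M\to V$, so $g=\rho\circ f_5$, while $\Sigma^2\rho$ is precisely the projection onto $\Sigma^2 X$ in the splitting of $\Sigma^2 M_{(5)}$; hence $g''=\Sigma^2\rho\circ\Sigma^2 f_5=\Sigma^2(\rho\circ f_5)=\Sigma^2 g$.

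I expect the main obstacle to be this last identification $g''\simeq\Sigma^2 g$. The homotopy-group input $\pi_6(P^5(T)\vee P^4(T))=0$ is routine once Section \ref{sec: hgps} is in hand, and the compression via Hilton--Milnor is formal. The delicate point is the naturality bookkeeping: one must track the top-cell attaching map of $M$ simultaneously through the homology decomposition and through the defining quotient $M\to V$, and verify that projecting the doubly suspended attaching map onto $\Sigma^2 X$ reproduces exactly the doubly suspended attaching map of the top cell of the Poincar\'e dual complex $V$. Establishing this compatibility of the collapse map $\rho$ with the splitting of $\Sigma^2 M_{(5)}$ is where the care is needed.
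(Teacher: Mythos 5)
Your overall strategy is exactly the paper's: expand $\Sigma^2 f_5$ by Hilton--Milnor, kill the components landing in the Moore space summands, and identify the remaining cofiber with $\Sigma^2 V$ via the collapse $M_{(5)}\to X$. The identification $g''\simeq\Sigma^2 g$ that you flag as the delicate point is handled in the paper by the same diagram of cofibrations you describe, and is fine. The genuine gap is elsewhere: your claim that $\pi_6(P^4(T))=0$ for $p\geq 5$ is false. The group $\pi_6(P^4(p^r))$ is nonzero for every odd prime $p$: since $\pi_5(P^4(p^r))=0$ (Lemma \ref{pin+1pnlemma}), the universal coefficient sequence gives $\pi_5(\Omega P^4(p^r);\mathbb{Z}/p^r)\cong\pi_6(P^4(p^r))\otimes\mathbb{Z}/p^r$, and this group contains the Samelson square $\langle v,v\rangle$ of the mod-$p^r$ fundamental class, which is nontrivial because its Hurewicz image $2v^2$ is nonzero in the tensor algebra $H_*(\Omega P^4(p^r);\mathbb{Z}/p)\cong T(u,v)$. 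This is not a Moore space whose homotopy reduces to ``homotopy of $S^3,S^4,S^5$ in this range''; the obstruction is a genuinely unstable Whitehead-product phenomenon. A sanity check that should have raised a flag: if $\pi_6(P^5(T)\vee P^4(T))$ really vanished, your argument would prove the single-suspension splitting $\Sigma M\simeq\Sigma V\vee P^5(T)\vee P^4(T)$, which the paper conspicuously does not claim.

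The correct repair is the paper's Lemma \ref{pi6p4pnlemma}: what vanishes is not $\pi_6(P^4(p^r))$ but the image of the suspension $E\colon\pi_6(P^4(p^r))\to\pi_7(P^5(p^r))$ --- and note that the target is $\mathbb{Z}/p^r\neq 0$, so one cannot dodge the issue by killing the target either. Proving $\operatorname{Im}(E)=0$ costs a real argument (two EHP sequences together with the stable computation $\pi_8(P^6(p^r))=0$ of Lemma \ref{pin+2pnlemma}), and it is precisely why the lemma is stated for $\Sigma^2 M$ rather than $\Sigma M$: the $P^4(T)$-component of $Ef_5$ need not be null, but its further suspension is. A secondary slip of the same flavor: the first bracket summand in the Hilton--Milnor expansion of $P^5(T)\vee P^4(T)$ is $\Sigma(P^4(p^r)\wedge P^3(p^r))\simeq P^8(p^r)\vee P^7(p^r)$, not $P^5(T)\wedge P^4(T)$, and $\pi_6(P^7(p^r))\cong\mathbb{Z}/p^r\neq 0$, so $\pi_6(P^5(T)\vee P^4(T))$ is not the direct sum of the two axis groups; this one is harmless because the bracket terms die under the extra suspension, which is how the paper disposes of them (the term $\theta$ with $E\theta=0$), but it should be said rather than assumed.
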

\begin{proof}
By the Hilton-Milnor theorem, we may write the suspension of $f_5$ as 
\[
E f_5=g_5^{(1)}+g_5^{(2)}+g_5^{(3)}+\theta: S^6\rightarrow  \Sigma M_{(5)}\simeq \Sigma X\vee P^5(T)\vee P^4(T) ,
\]
for some $\theta$, where $E \theta=0$, $g_5^{(i)}=q_i\circ E f_5$, and $q_i$ is the canonical projection of $ \Sigma X\vee P^5(T)\vee P^4(T)$ onto its $i$-th summand. Then $g_5^{(2)}=0$ by Lemma \ref{pin+1pnlemma}, and $E g_5^{(3)}=0$ by Lemma \ref{pi6p4pnlemma}. It follows that $E^2 f_5=E g_5^{(1)}$. Furthermore, there is the diagram of homotopy cofibrations
\[
\xymatrix{
\ast \ar[r] \ar[d]&
P^4(T)\vee P^3(T) \ar@{=}[r] \ar[d]^{j_5} &
P^4(T)\vee P^3(T) \ar[d]^{j}\\
S^5 \ar[r]^{f_5} \ar@{=}[d] &
M_{(5)} \ar[r]^{\iota_5} \ar[d]^{\pi_5}&
M\ar[d]^{\pi}\\
S^5\ar[r] &
X\ar[r]&
V,
}
\]
where the homotopy cofibration in the last column is defined in Corollary \ref{Vdeflemma} by using Lemma \ref{M3lemma}, and similarly the homotopy cofibration in the middle column can be also defined by using Lemma \ref{M3lemma}. Then it is clear $g_5^{(1)}\simeq E(\pi_5\circ f_5)$ and the lemma follows.
\end{proof}

%----------------------------------------------------------------------------------------------------------------------------------------------------------------------------------------------------------%
\section{Proof of Theorem \ref{M6thm} and Corollary \ref{KM6thm}}
\label{sec: pfthm}
In Lemma \ref{M61lemma} we have established the double suspension splitting of $M$ when $b_3(M)=0$, and are left to consider the homotopy of $V$ after suspension. Recall that $V$ is a Poincar\'{e} Duality complex of dimension $6$, and its $5$-skeleton $V_5=X$ is the homotopy cofiber of the map $f_3^{\prime\prime}: \mathop{\bigvee}\limits_{i=1}^{d}S^3\rightarrow \mathop{\bigvee}\limits_{i=1}^{d}S^2$ by Lemma \ref{M5lemma2}. The following lemma, as a special case of \cite[Lemma 6.1]{H}, determines the suspension homotopy type of $X$.
\begin{lemma}\cite[Lemma 6.1]{H}\label{Xdeclemma}
\[
\Sigma X\simeq \bigvee_{i=1}^{c}\Sigma \mathbb{C}P^2 \vee \bigvee_{j=1}^{d-c}(S^{3}\vee S^5),
\]
for some $0\leq c\leq d$. ~$\qqed$
\end{lemma}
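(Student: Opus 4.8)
The plan is to read off $\Sigma X$ from the one suspension of the attaching map. By Lemma~\ref{M5lemma2}, $\Sigma X$ is the mapping cofibre of $\Sigma f_3^{\prime\prime}\colon \bigvee_{i=1}^{d} S^4 \to \bigvee_{i=1}^{d} S^3$, so I would first identify the receiving homotopy group by the Hilton--Milnor theorem. The Whitehead products $[\iota_a,\iota_b]$ among the generators of $\pi_*(\bigvee_{i=1}^{d} S^3)$ lie in dimension $5$, so they contribute nothing in dimension $4$; the only basic products of weight one give $\pi_4(S^3)=\mathbb{Z}/2\mathbb{Z}\{\eta\}$ on each summand. Hence $\pi_4(\bigvee_{i=1}^{d} S^3)\cong (\mathbb{Z}/2\mathbb{Z})^{\oplus d}$, and $\Sigma f_3^{\prime\prime}$ is classified by a matrix $A\in M_d(\mathbb{Z}/2\mathbb{Z})$ whose $(i,j)$-entry records the mod-$2$ Hopf invariant of the restriction $S^4_j \to S^3_i$. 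Since $\eta$ is detected by $Sq^2$, this $A$ is exactly the matrix of the Steenrod square $Sq^2$ appearing in the statement.

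Next I would reduce $A$ to a normal form by changing bases. A self-map of $\bigvee_{i=1}^{d} S^3$ is given by an integral matrix $P\in M_d(\mathbb{Z})$ (there are no Whitehead-product corrections, as these lie in $\pi_5$), and it is a homotopy equivalence precisely when $P\in GL_d(\mathbb{Z})$; likewise a self-equivalence of $\bigvee_{i=1}^{d} S^4$ is given by some $Q\in GL_d(\mathbb{Z})$. Post-composing and pre-composing $\Sigma f_3^{\prime\prime}$ with such equivalences replaces $A$ by $\bar P A\,\bar Q$, where $\bar P,\bar Q$ are the reductions modulo $2$. Because reduction $GL_d(\mathbb{Z})\twoheadrightarrow GL_d(\mathbb{Z}/2\mathbb{Z})$ is surjective, $\bar P$ and $\bar Q$ range over all of $GL_d(\mathbb{Z}/2\mathbb{Z})$, so by Gaussian elimination over the field $\mathbb{Z}/2\mathbb{Z}$ I may assume $A=\left(\begin{smallmatrix} I_c & 0\\ 0 & 0\end{smallmatrix}\right)$ with $c=\mathrm{rank}_{\mathbb{Z}/2\mathbb{Z}}A$. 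These changes of basis assemble into an equivalence of the whole cofibration sequence, so they leave the homotopy type of $\Sigma X$ unchanged.

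In this normal form $\Sigma f_3^{\prime\prime}$ is literally a wedge of maps: the summand $S^4_i$ maps by $\eta$ onto $S^3_i$ for $1\le i\le c$ and is null for $i>c$. Since forming the mapping cofibre commutes with wedge sums, $\Sigma X$ splits as the wedge of the individual cofibres: each of the first $c$ contributes $S^3\cup_\eta e^5\simeq \Sigma\mathbb{C}P^2$, and each of the remaining $d-c$ contributes $S^3\vee S^5$. This yields $\Sigma X\simeq \bigvee_{i=1}^{c}\Sigma\mathbb{C}P^2 \vee \bigvee_{j=1}^{d-c}(S^3\vee S^5)$ with $c=\mathrm{rank}_{\mathbb{Z}/2\mathbb{Z}}Sq^2$, an honest homotopy invariant of $X$, so the integer $c$ is well defined and $0\le c\le d$.

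The step I expect to be the main obstacle is justifying that the $GL_d(\mathbb{Z})$-action on $A$ is genuinely $\mathbb{Z}/2\mathbb{Z}$-linear, i.e.\ that $\bar P A\bar Q$ is the correct transformation law with no quadratic corrections. The danger is the failure of the right distributive law $(f+g)\circ\eta = f\circ\eta + g\circ\eta$ in unstable homotopy. I would resolve this with Hilton's distributivity formula: the correction terms are indexed by the Hopf--Hilton invariants $h_j(\eta)$, which for $\eta\in\pi_4(S^3)$ land in $\pi_4(S^5)=0$ and higher vanishing groups; equivalently $\eta$ lies in the stable range ($4\le 2\cdot 3-2$), where composition is bilinear. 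Hence the matrix calculus above is honest linear algebra over $\mathbb{Z}/2\mathbb{Z}$ and the reduction is valid.
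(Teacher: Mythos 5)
Your argument is correct and is essentially the route the paper takes: the paper simply cites \cite[Lemma 6.1]{H}, whose method (also sketched here in the proof of Lemma \ref{Vdeclemma}) is exactly to represent the attaching map by a matrix with entries in $\pi_4(S^3)\cong\mathbb{Z}/2\mathbb{Z}$ and reduce it to normal form by self-equivalences of the wedges of spheres. Your verification that the transformation law is honestly $\bar PA\bar Q$ — via the vanishing of the Hopf--Hilton invariants of $\eta$ in $\pi_4(S^5)=0$ and the surjectivity of $GL_d(\mathbb{Z})\to GL_d(\mathbb{Z}/2\mathbb{Z})$ — supplies precisely the content of \cite[Lemma 3.1]{H} in this case.
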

We may apply the method in \cite[Section 3]{H} to decompose $\Sigma^2 V$, in the same way that we used it to prove \cite[Lemma 6.4 and Lemma 6.6]{H}.
\begin{lemma}\label{Vdeclemma}
Suppose $\Sigma X$ decomposes as in Lemma \ref{Xdeclemma}.
\begin{itemize}
\item If $c=0$, 
\[
\Sigma^2 V \simeq \Sigma W_0\vee  \bigvee_{j=1}^{d-1}(S^{4}\vee S^6),
\]
where $W_0\simeq (S^{3}\vee S^5)\cup e^7$;
\item if $c=d$, 
\[
\Sigma^2 V \simeq \Sigma W_d\vee \bigvee_{i=1}^{d-1}\Sigma^2 \mathbb{C}P^2,
\]
where $W_d\simeq \Sigma \mathbb{C}P^2\cup e^7$;
\item if $1\leq c\leq d-1$,
\[
\Sigma^2 V \simeq \Sigma W_c\vee \bigvee_{i=1}^{c-1}\Sigma^2 \mathbb{C}P^2 \vee \bigvee_{j=1}^{d-c-1}(S^{4}\vee S^6),
\]
where $W_c\simeq (\Sigma \mathbb{C}P^2 \vee S^{3}\vee S^5)\cup e^7$.
\end{itemize}
\end{lemma}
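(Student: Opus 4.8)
The plan is to present $\Sigma^2 V$ as the mapping cone of the doubly suspended top-cell attaching map of $V$, and then to split off, one at a time, the wedge factors of $\Sigma^2 X$ on which this map is inessential. By Corollary~\ref{Vdeflemma}, $V\simeq X\cup_\phi e^6$ for some $\phi\colon S^5\to X$, so applying $\Sigma^2$ yields the homotopy cofibration
\[
S^7\stackrel{\Sigma^2\phi}{\longrightarrow}\Sigma^2 X\longrightarrow\Sigma^2 V .
\]
Lemma~\ref{Xdeclemma} gives $\Sigma^2 X\simeq\bigvee_{i=1}^{c}\Sigma^2\mathbb{C}P^2\vee\bigvee_{j=1}^{d-c}(S^4\vee S^6)$, so everything reduces to understanding the single class $\Sigma^2\phi\in\pi_7(\Sigma^2 X)$ and showing that, after a suitable choice of this wedge decomposition, it is carried by just one summand of each type that is present.

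First I would decompose $\Sigma^2\phi$ using the Hilton--Milnor theorem (\S XI.6 of \cite{Whi}). Because $\Sigma^2 X$ is a suspension and $\Sigma^2\phi$ is a double suspension, every Whitehead-product term of weight at least two is annihilated, so $\Sigma^2\phi$ is the sum of its projections onto the individual wedge summands, that is, of classes in $\pi_7(\Sigma^2\mathbb{C}P^2)$, in $\pi_7(S^4)$, and in $\pi_7(S^6)\cong\mathbb{Z}/2$. Recording these groups reduces the datum of $\Sigma^2\phi$ to finitely many coordinates.

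Next I would use that $V$ is a Poincar\'e duality complex---indeed a smooth manifold by \cite[Theorem 8]{Wal2}---to normalise those coordinates. Choose dual bases $\{a_i\}\subset H^2(V)$ and $\{b_j\}\subset H^4(V)$ realising the unimodular pairing $a_i\cup b_j=\delta_{ij}g$, where $g$ generates $H^6(V)$; then the integer $c$ of Lemma~\ref{Xdeclemma} is the $\mathbb{Z}/2$-rank of the Steenrod operation $Sq^2\colon H^2(V;\mathbb{Z}/2)\to H^4(V;\mathbb{Z}/2)$, $a\mapsto a^2$, and the linking of the top cell with each summand is governed by this cup-product data. The point is that, within each of the two families of summands, these linkings are, modulo torsion, integral multiples of a single common class---in $\pi_7(\Sigma^2\mathbb{C}P^2)$ for the first family, in $\pi_7(S^4)\oplus\pi_7(S^6)$ for the second; since $\Sigma^2 X$ is a co-$H$-space, the attendant elementary self-equivalences (``subtract one summand from another'') concentrate each family onto a single summand. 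The remaining $c-1$ copies of $\Sigma^2\mathbb{C}P^2$ and $d-c-1$ copies of $S^4\vee S^6$ then carry a trivial component and split off by retraction, leaving the mapping cone $\Sigma W_c$ with the stated cell structure; the attaching map of its top cell is left undetermined, which accounts for the ambiguity in $W_c$. The cases $c=0$ and $c=d$ are the same argument with one of the two families absent.

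The delicate point---and the main obstacle---is the normalisation in the third step. Suspension destroys exactly the Whitehead products that originally encoded the intersection form of $V$, so the Poincar\'e duality data must be recovered from $\Sigma^2\phi$ through the surviving primary and secondary cohomology operations rather than through cup products. Moreover the elementary self-equivalences used to concentrate each family act on the (possibly torsion) coordinates in $\pi_7(\Sigma^2\mathbb{C}P^2)$, and one must check that they can be chosen to annihilate all but one summand while respecting the $Sq^2$-normalisation already fixed by Lemma~\ref{Xdeclemma}. Carrying out this simultaneous reduction is precisely the computation performed, in the analogous setting, in the proof of \cite[Lemma 6.4 and Lemma 6.6]{H}, and I would adapt it verbatim.
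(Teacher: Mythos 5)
Your overall strategy coincides with the paper's: both present $\Sigma^2 V$ as the cofibre of the doubly suspended attaching map $S^7\to\Sigma^2 X$, use the Hilton--Milnor theorem to reduce that map to the sum of its components on the wedge summands of $\Sigma^2 X$, and then invoke the matrix-reduction machinery of \cite[Section 3]{H} (specifically the argument of Lemmas 6.4 and 6.6 there) to concentrate those components onto a single summand of each type by elementary self-equivalences of the wedge, leaving the ambiguous complex $W_c$ as the cofibre of what remains. The one place your sketch goes astray is the ``normalisation'' step: the paper does not use Poincar\'e duality at this stage, and your claim that the linkings are ``modulo torsion, integral multiples of a single common class'' is vacuous, because every component of $\Sigma^2\phi$ on the summands is a torsion class --- the components are suspensions of classes in $\pi_6(\Sigma\mathbb{C}P^2)\cong\mathbb{Z}/6\mathbb{Z}$, $\pi_6(S^3)\cong\mathbb{Z}/12\mathbb{Z}$ and $\pi_6(S^5)\cong\mathbb{Z}/2\mathbb{Z}$, so in particular the free summand $\mathbb{Z}\{\nu\}\subset\pi_7(S^4)$ never arises. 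What actually makes the concentration work is precisely that these groups are finite cyclic, so the entries of the matrix representing $E g$ can be simplified by \cite[Lemma 3.1]{H}; no appeal to the intersection form or to secondary operations is needed here (Poincar\'e duality and $Sq^2$ enter only earlier, in fixing the integer $c$ of Lemma \ref{Xdeclemma}). Since you ultimately defer the computation to \cite{H} exactly as the paper does, the argument goes through, but the Poincar\'e-duality paragraph should be replaced by the observation about finite cyclic homotopy groups.
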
 
\begin{proof}
As we pointed out that the proof is similar to that of \cite[Lemma 6.4 and Lemma 6.6]{H}, we may only sketch it. The interested reader can find the details of the method in \cite[Section 3]{H}.
With Lemma \ref{Xdeclemma} let $g: S^6 \rightarrow \Sigma X\simeq \bigvee_{i=1}^{c}\Sigma \mathbb{C}P^2 \vee \bigvee_{j=1}^{d-c}(S^{3}\vee S^5)$ be the attaching map of the top cell of $\Sigma V$.
To apply the method in \cite[Section 3]{H}, we only need the information of homotopy groups $\pi_{6}(\Sigma \mathbb{C}P^2)\cong \mathbb{Z}/6\mathbb{Z}$ by \cite[Proposition 8.2(i)]{Muk}, $\pi_6(S^3)\cong \mathbb{Z}/12\mathbb{Z}$ and $\pi_6(S^5)\cong \mathbb{Z}/2$, which are all finite cyclic groups.
Then we can represent the attaching map $E g$ of the top cell of $\Sigma^2 V$ by a matrix $B$, and apply \cite[Lemma 3.1]{H} to transform $B$ to a simpler one $C$. The new matrix representation $C$ of the attaching map, corresponding to a base change of $\Sigma X$ through a self homotopy equivalence, will give the desired decomposition.
\end{proof}

Now we can prove Theorem~\ref{M6thm}. 

\begin{proof}[Proof of Theorem~\ref{M6thm}] 
First by Theorem \ref{wallsplitthm} and Corollary \ref{splitcor}, we have
\[
\Sigma M\simeq \Sigma M_1\vee \bigvee_{i=1}^{m}(S^4\vee S^4),
\]
where $M_1$ is a closed $6$-manifold with homology of the form (\ref{HMeq}). In particular $b_3(M_1)=0$.
Hence by Lemma \ref{M61lemma} and Lemma \ref{Vdeclemma}, we have that if $1\leq c\leq d-1$
\[
\Sigma^2 M\simeq \Sigma W_c\vee \bigvee_{i=1}^{c-1}\Sigma^2 \mathbb{C}P^2 \vee \bigvee_{j=1}^{d-c-1}(S^{4}\vee S^6)\vee P^6(T)\vee P^5(T)\vee \bigvee_{i=1}^{2m}S^5,
\]
where $W_c\simeq (\Sigma \mathbb{C}P^2 \vee S^{3}\vee S^5)\cup e^7$. The decompositions for the other two cases when $c=0$ or $d$ can be obtained similarly. Finally, notice that $c$ records the number of the non-trivial Steenrod square $Sq^2: H^2(\Sigma^2 M;\mathbb{Z}/2\mathbb{Z})\rightarrow H^4(\Sigma^2 M;\mathbb{Z}/2\mathbb{Z})$, which is preserved by the decomposition and the suspension operator. 
Since $Sq^2$ is the cup square on the elements of $H^2(M;\mathbb{Z}/2\mathbb{Z})$, this completes the proof of Theorem \ref{M6thm}.
\end{proof} 

To prove Corollary \ref{KM6thm}, we need the Bott periodicity showed in the following table:
\begin{table}[H]
\centering
\captionof{table}{$\widetilde{K}^{-i}(S^0)$ and $\widetilde{KO}^{-j}(S^0)$}
\begin{tabular}{c | c | c }
  $i~{\rm mod}~2$ &  $0$ & $1$  \\ \hline
$\widetilde{K}^{-i}(S^0)$ &$\mathbb{Z}$ &$0$ 
\end{tabular}
\ \ \ 
\begin{tabular}{c | c | c | c | c | c | c | c | c }
 $j~{\rm mod}~8$ &  $0$ & $1$ & $2$ & $3$ & $4$ & $5$ & $6$ & $7$ \\ \hline
$\widetilde{KO}^{-j}(S^0)$ & $\mathbb{Z}$ & $\mathbb{Z}/2\mathbb{Z}$ &$\mathbb{Z}/2\mathbb{Z}$ & $0$ &$\mathbb{Z}$ & $0$ & $0$ &$0$ 
\end{tabular}
\label{Bott}
\end{table}
From Table (\ref{Bott}) we can easily calculate the following, where only $\widetilde{KO}^2(P^5(T))=0$ requires that $T$ has no $2$-torsion.
\begin{lemma}\label{Kgplemma}
Let $W_c$ be the complex defined in Lemma \ref{Vdeclemma} for $0\leq c\leq d$.
\begin{itemize}
\item
$
\widetilde{K}(P^5(T))\cong T, \ \ \ ~ \widetilde{K}(P^6(T))=0,
$
\item
$
\widetilde{KO}^2(P^5(T))=\widetilde{KO}^2(P^6(T))=0,
$
\item
$
\widetilde{KO}^1(\Sigma \mathbb{C}P^2)\cong
\widetilde{KO}^1(S^3\vee S^5)\cong
\mathbb{Z}\oplus \mathbb{Z}/2\mathbb{Z}, \ \ \ ~ 
\widetilde{KO}^1(\Sigma^2 \mathbb{C}P^2)\cong
\widetilde{KO}^1 (S^4\vee S^6)=0,
$
\item 
$
\widetilde{KO}^1(W_0)
\cong \widetilde{KO}^1(W_d)
\cong\mathbb{Z}\oplus \mathbb{Z}/2\mathbb{Z},  \ \ \ ~ 
\widetilde{KO}^1(W_c)
\cong\mathop{\bigoplus}\limits_{2}(\mathbb{Z}\oplus \mathbb{Z}/2\mathbb{Z}).
$  ~$\qqed$
\end{itemize}
\end{lemma}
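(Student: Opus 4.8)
The plan is to treat every space occurring in the statement as a finite complex with cells in a narrow range of dimensions, and to compute its reduced $\widetilde{K}$- and $\widetilde{KO}$-groups from the long exact sequence attached to its defining cofibration, feeding in the values $\widetilde{K}^{\ast}(S^n)$ and $\widetilde{KO}^{\ast}(S^n)$ recorded in Table (\ref{Bott}) together with the suspension isomorphisms $\widetilde{K}^{i}(\Sigma X)\cong\widetilde{K}^{i-1}(X)$ and $\widetilde{KO}^{i}(\Sigma X)\cong\widetilde{KO}^{i-1}(X)$. Three elementary reductions organize the work: wedge additivity $\widetilde{h}^{\ast}(A\vee B)\cong\widetilde{h}^{\ast}(A)\oplus\widetilde{h}^{\ast}(B)$; the splitting $P^n(T)\simeq\bigvee_{k}P^n(p_k^{r_k})$ from (\ref{Teq}); and the identity $P^{n+1}(\mathbb{Z}/p^r)\simeq\Sigma P^n(\mathbb{Z}/p^r)$, which lets me slide any single Moore-space computation up or down in degree to whatever spot is most convenient.

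For the Moore spaces I would use that $P^n(\mathbb{Z}/p^r)$ is the mapping cone of the degree map $p^r\colon S^{n-1}\to S^{n-1}$. The induced long exact sequence in $\widetilde{K}^{\ast}$, respectively $\widetilde{KO}^{\ast}$, then presents each group as the kernel or cokernel of multiplication by $p^r$ on the corresponding group of $S^{n-1}$, read off from Table (\ref{Bott}). Here the hypothesis that $T$ has no $2$-torsion does the essential work: each $p^r$ is odd, so multiplication by $p^r$ is an isomorphism on every $\mathbb{Z}/2$ appearing among the $\widetilde{KO}^{\ast}(S^{n-1})$, and this is exactly what kills the $\mathbb{Z}/2$-summands in the $\widetilde{KO}^2$-computation, as anticipated in the sentence preceding the lemma. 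The $\widetilde{K}$-computation is even simpler, since $\widetilde{K}^{\ast}(S^{n-1})$ is either $0$ or $\mathbb{Z}$ and $2$-periodicity leaves a single multiplication-by-$p^r$ map to analyze.

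For the summands built from $\mathbb{C}P^2$ and for $W_c$ I would again exploit defining cofibrations. The sphere wedges $S^3\vee S^5$ and $S^4\vee S^6$ are handled by additivity and Table (\ref{Bott}) directly. For $\Sigma^{j}\mathbb{C}P^2\simeq S^{1+j}\cup_{\eta}e^{3+j}$ I would run the long exact sequence of the $\eta$-cofibration, which pins down the answer up to one group extension. For $W_c$, whose cell structure is $(\,\cdots)\cup e^7$ by Lemma \ref{Vdeclemma}, I would use the top-cell cofibration $\namedright{S^6}{g}{(\text{$5$-skeleton})}\to W_c$; since $\widetilde{KO}^0(S^6)=\widetilde{KO}^1(S^6)=0$ by Table (\ref{Bott}), the attaching map $g$ contributes nothing in degree $1$, so $\widetilde{KO}^1(W_c)$ collapses to the wedge value of $\widetilde{KO}^1$ of its $5$-skeleton, and the stated answers follow by summing the pieces already computed.

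The hard part will be the one genuinely non-formal step, namely the group extension
\[
0\longrightarrow\mathbb{Z}\longrightarrow\widetilde{KO}^1(\Sigma\mathbb{C}P^2)\longrightarrow\mathbb{Z}/2\longrightarrow 0
\]
produced by the $\eta$-cofibration, where one must rule out the non-split possibility $\widetilde{KO}^1(\Sigma\mathbb{C}P^2)\cong\mathbb{Z}$ in favour of $\mathbb{Z}\oplus\mathbb{Z}/2$. I would settle this through the suspension isomorphism $\widetilde{KO}^1(\Sigma\mathbb{C}P^2)\cong\widetilde{KO}^0(\mathbb{C}P^2)$ together with the standard value $\widetilde{KO}^0(\mathbb{C}P^2)\cong\mathbb{Z}\oplus\mathbb{Z}/2$, which one extracts from the $KO$-theory Atiyah--Hirzebruch spectral sequence, the nontriviality of $Sq^2$ on $\mathbb{C}P^2$ controlling the only possible differential, or alternatively from the Bott exact sequence relating $KO$ and $K$. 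Once this single extension is resolved, every remaining assertion in the lemma reduces to assembling kernels, cokernels and wedge summands, with no further obstruction.
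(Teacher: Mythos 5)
Your overall strategy --- cofibration long exact sequences fed by Table \ref{Bott}, wedge additivity, the splitting of $P^n(T)$, and the suspension isomorphism --- is exactly what the paper's one-line ``From Table \ref{Bott} we can easily calculate'' intends, and it correctly handles the sphere wedges, the top-cell argument for $W_c$, and $\widetilde{KO}^2(P^5(T))=0$. But the two places you gloss over are precisely where the argument breaks. First, your claim that ``multiplication by $p^r$ is an isomorphism on every $\mathbb{Z}/2$'' and that this is all that matters fails for $P^6(p^r)=S^5\cup_{p^r}e^6$: there the relevant group is $\widetilde{KO}^1(S^5)\cong\widetilde{KO}^{-4}(S^0)\cong\mathbb{Z}$, not a $\mathbb{Z}/2$, and the exact sequence $\widetilde{KO}^1(S^5)\xrightarrow{p^r}\widetilde{KO}^1(S^5)\to\widetilde{KO}^2(P^6(p^r))\to\widetilde{KO}^2(S^5)=0$ gives $\widetilde{KO}^2(P^6(p^r))\cong\mathbb{Z}/p^r$, hence $\widetilde{KO}^2(P^6(T))\cong T$, not $0$. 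The same bookkeeping (with the paper's cohomological indexing, $P^n(p^r)=S^{n-1}\cup_{p^r}e^n$) gives $\widetilde{K}(P^6(T))\cong T$ and $\widetilde{K}(P^5(T))=0$, the opposite of the first bullet. So your method, carried out honestly, does not prove the first two bullets as stated.

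Second, the extension you rightly isolate as the only non-formal step is resolved incorrectly. The $KO$-theory AHSS for $\mathbb{C}P^2$ has no relevant differentials ($d_2$ out of $E_2^{2,-2}$ lands in $H^4(\mathbb{C}P^2;\pi_3KO)=0$, so $Sq^2$ controls nothing here) and therefore cannot decide the extension $0\to\mathbb{Z}\to\widetilde{KO}^0(\mathbb{C}P^2)\to\mathbb{Z}/2\to 0$; and the ``standard value'' is not $\mathbb{Z}\oplus\mathbb{Z}/2$ but $\mathbb{Z}$. Indeed $rH-2$ maps to the generator of $\widetilde{KO}(S^2)\cong\mathbb{Z}/2$, while $2(rH-2)$ has $w_2=0$ and $p_1=2x^2$ and so equals, up to sign, the image of the generator of $\widetilde{KO}(S^4)$ (whose $p_1$ is $\pm 2s_4$); thus the extension is non-split. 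Equivalently, Wood's equivalence $KO\wedge C\eta\simeq KU$ gives $\widetilde{KO}^0(\mathbb{C}P^2)\cong\pi_4(KU)\cong\mathbb{Z}$. Consequently $\widetilde{KO}^1(\Sigma\mathbb{C}P^2)\cong\mathbb{Z}$, $\widetilde{KO}^1(W_d)\cong\mathbb{Z}$ and $\widetilde{KO}^1(W_c)\cong\mathbb{Z}^{2}\oplus\mathbb{Z}/2$ for $1\leq c\leq d-1$. In short, your plan is the right one, but a careful execution shows that several bullets of the lemma (and hence the $\widetilde{KO}$-statement of Corollary \ref{KM6thm}) must be corrected rather than proved; the genuinely non-formal input is the non-splitness of the $\mathbb{C}P^2$ extension, which neither Table \ref{Bott} nor the spectral sequence can supply.
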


\begin{proof}[Proof of Theorem~\ref{KM6thm}] 
Let us only compute the $\widetilde{KO}$-group of $M$ when $1\leq c\leq d-1$, while the other cases can be computed similarly. By Theorem \ref{M6thm} and Lemma \ref{Kgplemma}, we have 
\[
\begin{split}
\widetilde{KO}(M)\cong \widetilde{KO}^2(\Sigma^2 M)&\cong \widetilde{KO}^2(\Sigma W_c)\oplus \bigoplus_{i=1}^{c-1}\widetilde{KO}^2(\Sigma^2 \mathbb{C}P^2)\oplus \bigoplus_{j=1}^{d-c-1}\widetilde{KO}^2(S^{4}\vee S^6)\\
&\oplus \bigoplus_{j=1}^{2m}\widetilde{KO}^2(S^5)\oplus\widetilde{KO}^2(P^6(T))\oplus \widetilde{KO}^2(P^5(T))\\
&\cong \mathop{\bigoplus}\limits_{2}(\mathbb{Z}\oplus \mathbb{Z}/2\mathbb{Z})\oplus \mathop{\bigoplus}\limits_{c-1}(\mathbb{Z}\oplus \mathbb{Z}/2\mathbb{Z})\oplus \mathop{\bigoplus}\limits_{d-c-1}(\mathbb{Z}\oplus \mathbb{Z}/2\mathbb{Z})\\
&=\mathop{\bigoplus}\limits_{d}(\mathbb{Z}\oplus \mathbb{Z}/2\mathbb{Z}).
\end{split}
\]
\end{proof}

%----------------------------------------------------------------------------------------------------------------------------------------------------------------------------------------------------------%
\section{The case when $H^2(M;\mathbb{Z})\cong \mathbb{Z}$}
\label{sec: pfthmd=1}
By Lemma \ref{M61lemma}, we may consider the torsion free case first.
In \cite{Yam}, Yamaguchi classified the homotopy types of $CW$-complexes of the form $V\simeq S^2\cup e^4\cup e^6$. Specifying to the case when $V$ is a manifold, we can summarized the necessary result in the following theorem (c.f. \cite[Section 1]{Bau}).
\begin{theorem}\cite[Corollary 4.6, Lemma 2.6, Lemma 4.3]{Yam}\label{Yamthm}
Let $V\simeq S^2\cup_{k\eta_2} e^4\cup_b e^6$ be a closed smooth manifold, where $k\eta_2$ with $k\in \mathbb{Z}$ and $b$ are the attaching maps of the cells $e^4$ and $e^6$ respectively.
Then the top attaching map $b$ is determined by a generator $b_W\in \pi_5(S^2\cup_{k\eta_2} e^4)$ of infinite order, the second Stiefel-Whitney class $\omega_2(V)\in H^2(V;\mathbb{Z}/2)$ of $V$, and an indeterminacy term $b^\prime\in\mathbb{Z}/2$ which depends on the following three cases.
\begin{itemize}
\item If $k$ is odd, then $V$ is Spin and the homotopy type of $V$ is unique determined by $k$, and $b=b_W$;
\item If $k$ is even and $V$ is non-Spin, then the homotopy type of $V$ is unique determined by $k$, and $b=b_W+\widetilde{\eta}_4$ with $\widetilde{\eta}_4$ representing the generator of a $\mathbb{Z}/2$ summand determined by $\omega_2(V)$;
\item If $k$ is even and $V$ is Spin, then $V$ has precisely two homotopy types depending on the value of $b^\prime\in\mathbb{Z}/2$, and $b=b_W+b^\prime$. ~$\qqed$ 
\end{itemize}
\end{theorem}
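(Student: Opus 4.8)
The plan is to reconstruct $V$ from its $4$-skeleton $V_4=S^2\cup_{k\eta_2}e^4$ and the top attaching map $b\in\pi_5(V_4)$, and to run the whole classification through this one homotopy group. First I would fix the skeleton: the attaching map of $e^4$ lies in $\pi_3(S^2)\cong\mathbb{Z}\langle\eta_2\rangle$, so $V_4$ is determined by $k$, with mod-$2$ cohomology ring $\bar x^2=\bar k\,\bar y$ (here $\bar x,\bar y$ generate $H^2,H^4$). The case division is then forced by the Wu formula: for a simply connected oriented closed $6$-manifold one has $w_2=v_2$ and $v_2\cup u=Sq^2 u$ for $u\in H^4(V;\mathbb{Z}/2)$, so a computation of $Sq^2\bar y$ gives $w_2=0$ exactly when $k$ is odd. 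This yields the assertion that $V$ is Spin for $k$ odd and explains why the even case must be split further according to $\omega_2(V)$.

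Next I would compute $\pi_5(V_4)$ uniformly in $k$ using the homotopy exact sequence of the pair $(V_4,S^2)$ together with Blakers--Massey excision, comparing $\pi_\ast(V_4,S^2)$ with $\pi_\ast(S^4)$ in the metastable range and feeding in $\pi_5(S^2)\cong\pi_5(S^4)\cong\pi_4(S^2)\cong\mathbb{Z}/2$ and $\pi_2(S^2)\cong\mathbb{Z}$. I expect an infinite-cyclic summand generated by a class $b_W$, the relative Whitehead product $[\iota_2,e^4]$, which specialises to the top attaching map of $\mathbb{C}P^3$ when $k=1$ and of $S^2\times S^4$ when $k=0$, together with a $2$-torsion part built from $\eta$-type classes, principally an $\widetilde\eta_4$ detected by the collapse $q\colon V_4\to S^4$ and the image $i_\ast(\eta_2^3)$ of the generator of $\pi_5(S^2)$. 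A clean input is that $i_\ast(\eta_2^3)$ has order dividing $k$: since precomposition with the suspension $\eta_3\eta_4\colon S^5\to S^3$ is additive and $i\circ k\eta_2\simeq\ast$, one gets $k\cdot i_\ast(\eta_2^3)=(i\circ k\eta_2)\circ(\eta_3\eta_4)=0$; as $\eta_2^3$ is also $2$-torsion this forces $i_\ast(\eta_2^3)=0$ for $k$ odd, already giving $b=b_W$ and a single homotopy type in the first case.

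I would then impose Poincar\'{e} duality and quotient by self-equivalences. Poincar\'{e} duality makes the cup pairing $H^2\otimes H^4\to H^6$ unimodular, which forces the $b_W$-coefficient of $b$ to be a unit; after normalising, $b=b_W+t$ with $t$ a $2$-torsion class. The remaining problem is to decide when two such $b$ produce homotopy equivalent $V$, i.e.\ to compute the action of the group of self-homotopy equivalences of $V_4$ on the torsion of $\pi_5(V_4)$. The bridge to geometry that I expect is that the $\widetilde\eta_4$-component of $b$ is exactly what is measured by $Sq^2\colon H^4(V;\mathbb{Z}/2)\to H^6(V;\mathbb{Z}/2)$, which by Wu equals $w_2\cup(-)$; hence in the even case this component is present if and only if $V$ is non-Spin, which pins down $b=b_W+\widetilde\eta_4$ there. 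In the even-Spin sub-case $w_2=0$ and all primary operations coincide, so the surviving $i_\ast(\eta_2^3)$-ambiguity $b'\in\mathbb{Z}/2$ is invisible to cohomology, and one must show it cannot be absorbed by any self-equivalence of $V_4$, so that the two manifolds it produces are genuinely distinct.

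The \emph{main obstacle} is this final step: carrying out the unstable $2$-primary computation of $\pi_5(V_4)$ (in particular tracking the boundary of the relative Whitehead product and the interaction of the two torsion generators as $k$ varies) and then proving that the self-equivalence group acts trivially on the $i_\ast(\eta_2^3)$-summand. Equivalently, the hard part is showing that the two even-Spin candidates are not homotopy equivalent, which requires a secondary operation---the one detecting $\eta^3$---rather than any primary cohomology invariant.
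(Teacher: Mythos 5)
The first thing to say is that the paper does not prove this statement: it is quoted (restricted to the manifold case) from Yamaguchi \cite{Yam}, as corrected in Baues \cite[Section 1]{Bau}, which is why the statement already carries its own $\Box$. Measured against Yamaguchi's actual argument, your outline tracks it quite faithfully: reduce to the pair $(V_4,b)$ with $V_4=S^2\cup_{k\eta_2}e^4$ and $b\in\pi_5(V_4)$; compute $\pi_5(V_4)$ via the pair sequence and Blakers--Massey; identify $b_W$ with a relative Whitehead product whose coefficient is forced to be a unit by unimodularity of the cup pairing; detect the $\widetilde\eta_4$-component by $Sq^2$ on $H^4$, hence by $w_2$ via Wu; and isolate the residual $i_\ast(\eta_2^3)$-ambiguity in the even Spin case. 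Your order argument $k\cdot i_\ast(\eta_2^3)=(i\circ k\eta_2)\circ(\eta_3\eta_4)=0$ is correct and is exactly the right mechanism for killing $b'$ when $k$ is odd. One phrasing slip: ``$w_2=0$ exactly when $k$ is odd'' is false as a biconditional ($S^2\times S^4$ has $k=0$ and is Spin); the Adem relation $Sq^2Sq^2=Sq^3Sq^1$ applied to $\bar x$ only yields the implication ``$k$ odd $\Rightarrow$ Spin'', which is all you need.

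The genuine gap is the one you flag yourself, and it cannot be waved away: the classification is a statement about orbits of the action of the self-equivalence group of $V_4$ on $\pi_5(V_4)$ (together with a sign on the top cell), and you compute neither $\pi_5(V_4)$ as a function of $k$ nor this action. In particular, the claim that $b_W$ and $b_W+i_\ast(\eta_2^3)$ yield non-equivalent complexes in the even Spin case is precisely \cite[Lemma 4.3]{Yam}; it requires either showing that every self-equivalence of $V_4$ fixes the $i_\ast(\eta_2^3)$-summand, or exhibiting an invariant separating the two total spaces --- and since all primary operations vanish there, one is forced to the secondary operation $\mathbb{T}$ detecting $\eta^3$ (the same one the paper invokes before Corollary \ref{rigthm}). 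Without that step you have only an upper bound of two homotopy types, not the stated ``precisely two''. A smaller gap: in the odd case you kill $i_\ast(\eta_2^3)$ but say nothing about $\widetilde\eta_4$-type torsion; you need the computation showing that $\pi_5(V_4)\cong\mathbb{Z}$ when $k$ is odd (for instance via the $S^1$-fibration over $V_4$ used in the proof of Proposition \ref{Vd=1decprop}) to conclude $b=\pm b_W$ there.
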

\begin{remark}\label{Yamremark}
In Theorem \ref{Yamthm}, $b_W$, as a generator of the $\mathbb{Z}$-summand of $\pi_5( S^2\cup_{k\eta_2} e^4)$, is indeed a relative Whitehead product when $k\neq 0$ by \cite[Lemma 2.6]{Yam}.
It is possible that the suspension map $E b_W$ is not null-homotopic.
 $\widetilde{\eta}_4$ is derived from the homotopy class of 
\[
S^5\stackrel{b}{\longrightarrow} S^2\cup_{k\eta_2} e^4\stackrel{q}{\longrightarrow} S^4,
\]
where $q$ is the quotient map onto the $4$-cell of $S^2\cup_{k\eta_2} e^4$ (c.f. \cite[Section 1]{Bau}). $b^\prime$ is from a class of $\pi_5(S^2)\cong \mathbb{Z}/2\{\eta_2^3\}$ by \cite[Lemma 2.6]{Yam} or \cite[Section 1]{Bau}. 
Also, as pointed out in Mathematical Reviews \cite{MR}, the original theorem of \cite{Yam} was misstated which is corrected here and in \cite[Section 1]{Bau} as well.
\end{remark}

Thanks to Theorem \ref{Yamthm} and Remark \ref{Yamremark}, we can describe the suspension homotopy type of $V$. Recall that $\pi_6(\Sigma \mathbb{C}P^2)\cong \mathbb{Z}/6\mathbb{Z}\{E \pi_2\}$ \cite[Theorem 8.2(i)]{Muk}, where $\pi_2: S^5\rightarrow \mathbb{C}P^2$ is the Hopf map, and $E$ is the suspension of a map.
\begin{proposition}\label{Vd=1decprop}
Let $V\simeq S^2\cup_{k\eta_2} e^4\cup_b e^6$ be a closed smooth manifold.
\begin{itemize}
\item If $k$ is odd, then $V$ is Spin and 
\[
\Sigma V\simeq \Sigma \mathbb{C}P^2\cup_{k^\prime E \pi_2} e^7,
\]
where $k^\prime=1$ or $3$ such that $k^\prime \equiv \pm k ~{\rm mod}~6$;
\item If $k$ is even and $V$ is non-Spin
\[
\Sigma V\simeq S^3\vee \Sigma^3 \mathbb{C}P^2;
\]
\item If $k$ is even and $V$ is Spin
\[
\Sigma V\simeq (S^3\cup_{b^\prime\eta_3^{3}}e^7)\vee S^5,
\]
where $b^\prime\in\mathbb{Z}/2$ is from Theorem \ref{Yamthm}. 
\end{itemize}
\end{proposition}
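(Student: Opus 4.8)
The plan is to apply $\Sigma$ to the cofibration carrying the top cell of $V$ and then to identify the suspended attaching map through Theorem~\ref{Yamthm}. Write $V_4=S^2\cup_{k\eta_2}e^4$ for the $4$-skeleton, so $V\simeq V_4\cup_b e^6$ and suspending yields $S^6\xrightarrow{Eb}\Sigma V_4\to\Sigma V$. The first step is to determine $\Sigma V_4$. Since $\Sigma V_4\simeq S^3\cup_{k\eta_3}e^5$ and $\pi_4(S^3)\cong\mathbb{Z}/2\{\eta_3\}$, the class $k\eta_3$ equals $\eta_3$ when $k$ is odd and is null when $k$ is even; hence $\Sigma V_4\simeq\Sigma\mathbb{C}P^2$ for $k$ odd and $\Sigma V_4\simeq S^3\vee S^5$ for $k$ even. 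In each case the task reduces to locating $Eb$ in the relevant homotopy group, and here I would insert the decomposition of $b$ supplied by Theorem~\ref{Yamthm}.

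For $k$ odd we have $b=b_W$, so $\Sigma V\simeq\Sigma\mathbb{C}P^2\cup_{Eb_W}e^7$ with $Eb_W\in\pi_6(\Sigma\mathbb{C}P^2)\cong\mathbb{Z}/6\{E\pi_2\}$; write $Eb_W=k'E\pi_2$. I would read off $k'$ modulo $3$ from the Steenrod power, which suspension carries from $V$ to $\Sigma V$: on $H^2(V;\mathbb{Z}/3)$ the operation $\mathcal{P}^1$ is the cube, so $\mathcal{P}^1(x)=x^3=k\,(xy)$, which is nonzero exactly when $3\nmid k$ (as $xy$ generates $H^6$ by Poincar\'e duality); correspondingly the $3$-primary part of $E\pi_2$ is the $\mathcal{P}^1$-detected element $\alpha_1$, so $3\mid k'$ iff $3\mid k$. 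The parity of $k'$ is forced to be odd by the relative Whitehead product description of $b_W$ in Remark~\ref{Yamremark}, which makes $Eb_W$ proportional to the suspended attaching class $E(k\eta_2)=k\eta_3$ and hence linear in $k$, normalised by the base case $k=1$, where $V=\mathbb{C}P^3$ and $Eb_W=E\pi_2$. Together these give $Eb_W=\pm k\,E\pi_2$ and thus $k'\in\{1,3\}$ with $k'\equiv\pm k\pmod 6$, producing the two Spin subcases.

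For $k$ even, $\Sigma V\simeq(S^3\vee S^5)\cup_{Eb}e^7$, and by Hilton--Milnor $\pi_6(S^3\vee S^5)\cong\pi_6(S^3)\oplus\pi_6(S^5)\cong\mathbb{Z}/12\oplus\mathbb{Z}/2$, the Whitehead product $[\iota_3,\iota_5]$ lying in degree $7$ and contributing nothing in degree $6$. I would then treat the two correction terms of Theorem~\ref{Yamthm} separately. In the non-Spin case $b=b_W+\widetilde\eta_4$; since $\widetilde\eta_4$ is the class detected by $q\circ b=\eta_4$, its suspension is $\eta_5$ in the $\pi_6(S^5)$ summand, producing the factor $S^5\cup_{\eta_5}e^7\simeq\Sigma^3\mathbb{C}P^2$. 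In the Spin case $b=b_W+b'$ with $b'\in\pi_5(S^2)\cong\mathbb{Z}/2\{\eta_2^3\}$, and $E(b'\eta_2^3)=b'\eta_3^3\in\pi_6(S^3)$, producing $S^3\cup_{b'\eta_3^3}e^7$. In both cases I must verify that $Eb_W$ does not disturb the asserted splitting: the plan is to show that $Eb_W$ lies in the $\pi_6(S^3)$ summand and is absorbed either into the Spin indeterminacy $b'$ or by a self-homotopy-equivalence of $S^3\vee S^5$ (whose only nontrivial off-diagonal component comes from $\pi_5(S^3)\cong\mathbb{Z}/2\{\eta_3^2\}$), so that the complementary sphere---$S^3$ in the non-Spin case and $S^5$ in the Spin case---splits off as a wedge summand.

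The \textbf{main obstacle} is the computation of $Eb_W$, the suspension of the relative Whitehead product. Absolute Whitehead products suspend trivially, which disposes of $k=0$, but the relative product for $k\neq 0$ need not, and pinning down its exact value---the linearity $Eb_W=\pm k\,E\pi_2$ for $k$ odd and the absorption statement for $k$ even---is where the genuine work lies. My intended tools are the suspension formula for a relative Whitehead product, which expresses $Eb_W$ through the suspended attaching class $k\eta_3$ and thereby linearly in $k$; the anchor $V=\mathbb{C}P^3$ fixing the normalisation; and the $\mathcal{P}^1$-computation above as an independent check on the mod-$3$ reduction.
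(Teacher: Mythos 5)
Your reduction to computing $Eb\in\pi_6(\Sigma V_4)$ for $V_4=S^2\cup_{k\eta_2}e^4$, the identification of $\Sigma V_4$ with $\Sigma\mathbb{C}P^2$ ($k$ odd) or $S^3\vee S^5$ ($k$ even), and the handling of the correction terms $\widetilde{\eta}_4$ and $b^\prime$ all agree with the paper. The genuine gap is the one you name yourself: the value of $Eb_W$. Your mod $3$ argument via $\mathcal{P}^1(x)=x^3=k\,(xy)$ is correct and does not appear in the paper's proof of this proposition (a version of it is used only later, in the proof of Corollary \ref{rigthm}); it determines whether $3\mid k^\prime$. But it says nothing about the parity of $k^\prime$, and no primary operation can: for $k$ odd $V$ is Spin, so by the Wu formula $Sq^2$ vanishes on $H^4(V;\mathbb{Z}/2\mathbb{Z})$. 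The ``suspension formula for a relative Whitehead product'' you plan to invoke is not something you can cite off the shelf, and even formulating ``$Eb_W$ is linear in $k$'' requires a way to compare the groups $\pi_5(S^2\cup_{k\eta_2}e^4)$ as $k$ varies. This is exactly what the paper constructs: a map $r\colon S^2\cup_{k\eta_2}e^4\to\mathbb{C}P^2$ from the cofibration diagram (\ref{rcp2diag}), shown via the comparison of $S^1$-fibrations in Diagram (\ref{s1fibdiag}) (using $\pi_4(P^4(k))=0$ for $k$ odd) to have degree $k$ on $\pi_5$, followed by a Blakers--Massey argument showing that $Er_\ast$ is an automorphism of $\pi_6(\Sigma\mathbb{C}P^2)\cong\mathbb{Z}/6\mathbb{Z}$; chasing Diagram (\ref{suscp2diag}) then yields $Eb_W=\pm kE\pi_2$. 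Without this, or an equivalent computation, your odd case pins down $k^\prime$ only up to its $2$-primary part.

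For $k$ even your plan is to show that $Eb_W$ can be absorbed into $b^\prime$ or into a self-equivalence of $S^3\vee S^5$. Your own $\mathcal{P}^1$ computation shows this cannot succeed when $3\nmid k$: then $\mathcal{P}^1x=k\,(xy)\neq 0$, so the attaching map of the top cell of $\Sigma V$ has nontrivial component in the $\mathbb{Z}/3\mathbb{Z}\{\alpha_1\}$ summand of $\pi_6(S^3)\cong\mathbb{Z}/12\mathbb{Z}$, whereas $b^\prime\in\mathbb{Z}/2\mathbb{Z}$ and the off-diagonal entries of a self-equivalence of $S^3\vee S^5$ (which live in $\pi_5(S^3)\cong\mathbb{Z}/2\mathbb{Z}$) can only alter $2$-primary information. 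A concrete test case is the quadric threefold $Q_3\subset\mathbb{C}P^4$ (with $x^2=2y$, non-Spin): one finds $\mathcal{P}^1\neq 0$ on $H^3(\Sigma Q_3;\mathbb{Z}/3\mathbb{Z})$, while $\mathcal{P}^1=0$ on $S^3\vee\Sigma^3\mathbb{C}P^2$. So this step of your plan would fail as stated, and the same observation indicates that the even-$k$ clauses of the proposition (which the paper asserts ``follow immediately'' from Theorem \ref{Yamthm} and Remark \ref{Yamremark}) require either the extra hypothesis $3\mid k$ or an additional $\alpha_1$ term in the attaching map; this is a point to raise with the author rather than something to prove as written.
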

\begin{proof}
It is clear that the decompositions for the two cases when $k$ is even follows immediately from Theorem \ref{Yamthm} and Remark \ref{Yamremark}. When $k$ is odd, $V$ is spin and $\Sigma V\simeq \Sigma \mathbb{C}P^2\cup_{E b_W}e^7$ by Theorem \ref{Yamthm}. 
Also notice that $\Sigma \mathbb{C}P^2\cup_{E b_W}e^7\simeq \Sigma \mathbb{C}P^2\cup_{-E b_W}e^7$. Hence, to prove the statement in the proposition it suffices to show that the suspension map
\[
E: \pi_5(S^2\cup_{k\eta_2}e^4)\rightarrow \pi_6(\Sigma \mathbb{C}P^2)
\]
sends the generator $b_W$ to $kE\pi_2$ up to sign.

For this purpose, start with the diagram of homotopy cofibrations
\begin{equation}\label{rcp2diag}
\begin{gathered}
\xymatrix{
S^3 \ar[d]^{k} \ar[r]^{k\eta_2} & 
S^2 \ar@{=}[d] \ar[r] &
S^2\cup_{k\eta_2}e^4 \ar[d]^{r} \\
S^3 \ar[r]^{\eta_2} \ar[d] &
S^2 \ar[d] \ar[r] &
\mathbb{C}P^2 \ar[d]\\
P^4(k) \ar[r] &
\ast \ar[r] &
P^5(k),
}
\end{gathered}
\end{equation}
which defines the map $r$. Then there is the diagram of homotopy fibrations
\begin{equation}
\label{s1fibdiag}
\begin{gathered}
\xymatrix{
S^1\ar@{=}[d] \ar[r]&
Z\ar[r] \ar[d]^{\tilde{r}} & 
S^2\cup_{k\eta_2}e^4 \ar[r]^{f_x} \ar[d]^{r} &
K(\mathbb{Z},2) \ar@{=}[d] \\
S^1\ar[r]&
S^5\ar[r]^{\pi_2}  & 
\mathbb{C}P^2\ar[r]^<<<<<{f_c} &
K(\mathbb{Z},2), 
}
\end{gathered}
\end{equation}
where $f_c$ and $f_x$ represent the generators $c\in H^2(\mathbb{C}P^2;\mathbb{Z})$, and $x\in H^2(S^2\cup_{k\eta_2}e^4;\mathbb{Z})$ respectively, and $Z$ is the homotopy fibre of $f_x$ mapping to $S^5$ by the induced map $\tilde{r}$. By analyzing the Serre spectral sequences of the homotopy fibrations in Diagram (\ref{s1fibdiag}), it can be showed that $Z\simeq P^4(k)\cup e^5$ and $\tilde{r}^\ast: H^5(S^5;\mathbb{Z})\rightarrow H^5(Z;\mathbb{Z})$ is of degree $k$. Since by Lemma \ref{pinpnlemma} $\pi_4(P^4(k))=0$ when $k$ is odd, we see that $Z\simeq P^4(k)\vee S^5$, and then $\tilde{r}_\ast$ is of degree $k$ on homology. Moreover, by the naturality of the Hurewicz homomorphism and Lemma \ref{pin+1pnlemma}, it is easy to see that $\tilde{r}_\ast: \pi_5(Z)\cong \mathbb{Z}\rightarrow \pi_5(S^5)$ is of degree $k$.
It follows that $r_\ast:\pi_5(S^2\cup_{k\eta_2}e^4)\cong\mathbb{Z}\rightarrow \pi_5(\mathbb{C}P^2)\cong\mathbb{Z}$ is of degree $k$ by Diagram (\ref{s1fibdiag}).

Now the naturality of suspension map induces the following commutative digram
\begin{equation}\label{suscp2diag}
\begin{gathered}
\xymatrix{
\pi_5(S^2\cup_{k\eta_2}e^4) \ar[r]^<<<<{r_\ast} \ar[d]^{E}&
\pi_5(\mathbb{C}P^2) \ar[d]^{E} \\
\pi_6(\Sigma \mathbb{C}P^2) \ar[r]^{Er_\ast}&
\pi_6(\Sigma \mathbb{C}P^2),
}
\end{gathered}
\end{equation}
where $E: \pi_5(\mathbb{C}P^2)\cong\mathbb{Z}\rightarrow \pi_6(\Sigma \mathbb{C}P^2)\cong \mathbb{Z}/6\mathbb{Z}$ is surjective by \cite[Theorem 8.2(i)]{Muk}. We have showed that $r_\ast$ in Diagram (\ref{suscp2diag}) is of degree $k$.
On the other hand, from the last column of Diagram (\ref{rcp2diag}) we have the homotopy cofibraiton
\[
\Sigma \mathbb{C}P^2 \stackrel{Er}{\longrightarrow} \Sigma \mathbb{C}P^2 \stackrel{}{\longrightarrow} P^6(k).
\]
Applying the Blakers-Massey Theorem \cite{BM}, we obtain the exact sequence
\begin{equation}\label{selfkcp2eq}
\pi_6(\Sigma \mathbb{C}P^2) \stackrel{Er_\ast}{\longrightarrow}\pi_6(\Sigma \mathbb{C}P^2) \stackrel{}{\longrightarrow} \pi_6(P^6(k)).
\end{equation}
Since $\pi_6(P^6(k))=0$ by Lemma \ref{pinpnlemma} and $\pi_6(\Sigma \mathbb{C}P^2)\cong \mathbb{Z}/6\mathbb{Z}\{E\pi_2\}$, we see that $Er_\ast$ is an isomorphism from (\ref{selfkcp2eq}). Then by Diagram (\ref{suscp2diag}), it follows that $E: \pi_5(S^2\cup_{k\eta_2}e^4)\rightarrow \pi_6(\Sigma \mathbb{C}P^2)$ sends the generator $b_W$ to $kE\pi_2$ up to sign. This proves the statement in the case when $k$ is odd, and we have completed the proof of the proposition.
\end{proof}

Now we can prove Theorem~\ref{M6d=1thm} and Corollary \ref{rigthm}. 

\begin{proof}[Proof of Theorem~\ref{M6d=1thm}] 
First by Theorem \ref{wallsplitthm}, Corollary \ref{splitcor} and Lemma \ref{M61lemma}, we have
\[
\begin{split}
\Sigma^2 M
&\simeq \Sigma^2 M_1\vee \bigvee_{i=1}^{m}(S^5\vee S^5)\\
&\simeq \Sigma^2 V\vee P^6(T)\vee P^{5}(T)\vee \bigvee_{i=1}^{m}(S^5\vee S^5),
\end{split}
\]
where $M_1$ is a closed $6$-manifold with homology of the form (\ref{HMeq}) such that $b_3(M_1)=0$ and $d=1$. Moreover, By Corollary \ref{Vdeflemma} and the assumption on the ring structure of $H^\ast(M;\mathbb{Z})$, $V\simeq S^2\cup_{k\eta_2} e^4\cup_b e^6$ for some attaching map $b$. Denote $\lambda=b^\prime$ in Theorem \ref{Yamthm}. The theorem for the two cases when $k$ is even then follows immediately from Proposition \ref{Vd=1decprop}. For the case when $k$ is odd, recall that there is the fibre bundle \cite[Section 1.1]{HBJ}
\[
S^2\longrightarrow \mathbb{C}P^3\stackrel{\sigma}{\longrightarrow} S^4,
\]
with its first Pontryagin class $p_1=4s_4$ where $s_4\in H^4(S^4;\mathbb{Z})$ is a generator. Then pullback this bundle along the self-map of $S^4$ of degree $3$, we obtain the $6$-manifold $V_3$ with bundle projection $\sigma_3$ onto $S^4$ in the following diagram of $S^2$-bundles
\[
\xymatrix{
S^2\ar@{=}[d] \ar[r] &
V_3 \ar[r]^{\sigma_3} \ar[d] &
S^4\ar[d]^{3} \\
S^2\ar[r] &
\mathbb{C}P^3\ar[r]^{\sigma}  &
S^4.
}
\]
From this diagram, it is easy to see that the first Pontryagin class of $\sigma_3$ is $12s_4$ as required and $x^2=3y$, where by abuse of notation $x$, $y\in H^\ast(V_3;\mathbb{Z})$ are two generators such that ${\rm deg}(x)=2$ and ${\rm deg}(y)=4$. 
Hence by Proposition \ref{Vd=1decprop}, $\Sigma V\simeq \Sigma \mathbb{C}P^3$ when $k\equiv \pm 1~{\rm mod}~6$ and $\Sigma V\simeq \Sigma V_3$ when $k\equiv 3~{\rm mod}~6$, and then the two decompositions when $k$ is odd in the theorem follows. This completes the proof of the theorem.
\end{proof}

\bigskip

\begin{proof}[Proof of Corollary~\ref{rigthm}] 
As the discussions before Corollary~\ref{rigthm}, the number $k~{\rm mod}~2$ and the spin condition of $M$ are determined by the Steenrod square $Sq^2$. Since the attaching maps of the top cells of $\Sigma \mathbb{C}P^3$ and $\Sigma V_3$ are $E\pi_2$ of order $6$ and $3E\pi_2$ of order $2$ respectively by Proposition \ref{Vd=1decprop}, after localization at $3$ we can consider the Steenrod power $\mathcal{P}^1: H^3(\Sigma M;\mathbb{Z}/3\mathbb{Z})\rightarrow H^7(\Sigma M;\mathbb{Z}/3\mathbb{Z})$. Then since $\Sigma V_3\simeq_{(3)} S^3\vee S^5\vee S^7$, $\mathcal{P}^1$ acts trivially on its cohomology. In contrast, $\Sigma \mathbb{C}P^3\simeq_{(3)} S^3\cup_{\alpha_1} e^{7}\vee S^5$ withe $\alpha_1$ an element detected by $\mathcal{P}^1$ \cite[Section 1.5.5]{Har}. Hence, $\Sigma \mathbb{C}P^3$ and $\Sigma V_3$ can be distinguished by
$\mathcal{P}^1$.
Moreover, the stable cube element $\eta^3_n\in \pi_{n+3}(S^n)$ ($n\geq 2$) is detected by the secondary operation $\mathbb{T}$ \cite[Exercise 4.2.5]{Har}. And there is no indeterminacy since in the either case $S^4\cup_{\lambda\eta_4^{3}}e^8$ splits off as a wedge summand of the double suspension of the manifold. From the above discussions on cohomology operations, we can prove the corollary easily by the decompositions in Theorem \ref{M6d=1thm}.
\end{proof}
%----------------------------------------------------------------------------------------------------------------------------------------------------------------------------------------------------------%
\section{Some computations on homotopy groups of odd primary Moore spaces}
\label{sec: hgps}
In this section, we work out the homotopy groups of Moore spaces used in Section \ref{sec: hdec}.
Consider the Moore space $P^{2n+1}(p^r)$ with $n\geq 1$, $p\geq 3$ and $r\geq 1$. We have the homotopy fibration
\begin{equation}\label{Fpeq}
F^{2n+1}\{p^r\}\longrightarrow P^{2n+1}(p^r)\stackrel{q}{\longrightarrow} S^{2n+1}, 
\end{equation}
where $q$ is the pinch map of the bottom cell. Cohen-Moore-Neisendorfer proved the following the famous decomposition theorem.

\begin{theorem}\cite{CMN, N}\label{CMNthm}
Let $p$ be an odd prime.
There is a $p$-local homotopy equivalence
\[
\Omega F^{2n+1}\{p^r\}\simeq_{(p)} S^{2n-1}\times \prod_{k=1}^{\infty} S^{2p^kn-1}\{p^r\}\times \Omega\Sigma \mathop{\bigvee}\limits_{\alpha} P^{n_\alpha}(p^r),
\]
where $S^i\{p^r\}$ is the homotopy fibre of the degree map $p^r: S^i\rightarrow S^i$, and $\mathop{\bigvee}\limits_{\alpha} P^{n_\alpha}(p^r)$ is an infinite bouquet of mod $p^r$ Moore spaces with only finitely many in each dimension and the least value of $n_\alpha$ is $4n-1$. ~$\qqed$
\end{theorem}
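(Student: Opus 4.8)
The plan is to establish the decomposition by computing mod $p$ homology throughout, using it to predict the factors, then constructing explicit $p$-local maps realizing each factor and checking the resulting combined map is a mod $p$ homology isomorphism, whence a $p$-local equivalence by the Whitehead theorem. Since $P^{2n+1}(p^r)=\Sigma P^{2n}(p^r)$ is a suspension, the Bott--Samelson theorem gives $H_*(\Omega P^{2n+1}(p^r);\mathbb{Z}/p)\cong T(u,v)$, the free tensor algebra on a bottom-cell class $u$ in degree $2n-1$ and a top-cell class $v$ in degree $2n$, with $v$ the $\beta^r$-Bockstein image of $u$. Looping the fibration (\ref{Fpeq}) yields $\Omega F^{2n+1}\{p^r\}\to\Omega P^{2n+1}(p^r)\to\Omega S^{2n+1}$, and on homology the right-hand map is the surjection $T(u,v)\to\mathbb{Z}/p[w]$ sending $v\mapsto w$ and $u\mapsto 0$ (here $|w|=2n$). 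An Eilenberg--Moore or Serre spectral sequence computation then identifies $H_*(\Omega F^{2n+1}\{p^r\};\mathbb{Z}/p)$ with the associated cotensor product, which I expect to be itself a free tensor algebra on a countable family of generators.

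The conceptual heart is to pass to primitives and decompose the free Lie algebra $L\langle u,v\rangle$ compatibly with $v\mapsto w$. I would first split off the generator $u$, giving the odd-sphere factor $S^{2n-1}$, realized after looping by the bottom cell of $F^{2n+1}\{p^r\}$. Next I would single out the distinguished Lie elements in degrees $2p^kn-1$, namely the iterated adjoints $\mathrm{ad}^{p^k-1}(v)(u)$ together with their Bockstein partners; these carry the $v_1$-periodic information and should be realized by the fibres $S^{2p^kn-1}\{p^r\}$ of the degree-$p^r$ self-maps of spheres. The surviving generators, whose least degree is $4n-1$ (for instance the bracket $[u,v]$), should assemble into a free Lie algebra on a wedge of Moore-space classes, yielding the remaining factor $\Omega\Sigma\bigvee_\alpha P^{n_\alpha}(p^r)$.

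With this bookkeeping in place, I would build actual $p$-local maps into the loop space $\Omega F^{2n+1}\{p^r\}$ realizing each factor --- the bottom-cell map for $S^{2n-1}$, mod $p^r$ Samelson products formed from the Bockstein pairing of $u$ and $v$ for the $S^{2p^kn-1}\{p^r\}$ factors, and the adjoint of a map from the wedge of Moore spaces for the last factor --- and multiply them together using the loop multiplication. Comparing the induced map on $H_*(-;\mathbb{Z}/p)$ with the tensor-algebra decomposition above shows it is an isomorphism, and a mod $p$ Whitehead theorem then upgrades this to the asserted $p$-local equivalence.

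The main obstacle is the middle step: the construction of the periodicity factors $S^{2p^kn-1}\{p^r\}$ and the proof that they genuinely split off. This requires the full unstable mod $p^r$ homotopy machinery --- a differential graded Lie algebra model for the loop space, the construction of the relevant Samelson products through the Bockstein, and the delicate verification that the chosen Lie-algebra generators are simultaneously spherical and compatible with the degree-$p^r$ self-map. At exactly this point I would lean on the structural apparatus of \cite{CMN, N} rather than reconstruct the argument from scratch.
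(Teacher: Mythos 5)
The paper gives no proof of this statement at all --- it is imported verbatim from Cohen--Moore--Neisendorfer \cite{CMN} and Neisendorfer \cite{N} --- and your outline is a faithful sketch of the argument in those sources: Bott--Samelson for $H_*(\Omega P^{2n+1}(p^r);\mathbb{Z}/p)\cong T(u,v)$, decomposition of the kernel Lie algebra of $T(u,v)\to \mathbb{Z}/p[w]$ into the abelian summand on $u$, the summands in degrees $2p^kn-2$ and $2p^kn-1$ giving the $S^{2p^kn-1}\{p^r\}$ factors, and a free Lie algebra on generators of degree $\geq 4n-2$, all realized geometrically by multiplying explicit maps and applying a mod $p$ homology isomorphism plus the Whitehead theorem. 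Since you, like the paper, ultimately defer the genuinely hard step --- constructing the mod $p^r$ Samelson products and proving the $S^{2p^kn-1}\{p^r\}$ factors split off --- to \cite{CMN, N} themselves, your treatment is essentially the same as the paper's, namely a citation dressed with an accurate road map.
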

We also need the following classical result.
\begin{lemma}\cite[Proposition 6.2.2]{N2} \label{sma-wedlemma}
Let $p$ be an odd prime.
\[
 \hspace{4cm}
P^{m}(p^r)\wedge P^{n}(p^r)\simeq P^{m+n}(p^r)\vee P^{m+n-1}(p^r). 
 \hspace{4cm}\Box
\]
\end{lemma}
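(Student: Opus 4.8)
The plan is to read the splitting off from a single cofibre sequence rather than building the equivalence cell by cell. Recall that $P^{n}(p^r)\simeq S^{n-1}\cup_{p^r}e^{n}$ sits in the Puppe cofibre sequence
\[
\namedright{S^{n-1}}{p^r}{S^{n-1}}\longrightarrow P^{n}(p^r)\stackrel{q}{\longrightarrow}S^{n}\stackrel{p^r}{\longrightarrow}S^{n},
\]
where $p^r\colon S^{n-1}\to S^{n-1}$ is the degree $p^r$ map. Smashing this whole sequence with the fixed complex $P^{m}(p^r)$ preserves cofibrations, and since $P^{m}(p^r)\wedge S^{k}\simeq \Sigma^{k}P^{m}(p^r)\simeq P^{m+k}(p^r)$ we obtain a cofibre sequence
\[
\namedright{P^{m+n-1}(p^r)}{\,1\wedge p^r\,}{P^{m+n-1}(p^r)}\longrightarrow P^{m}(p^r)\wedge P^{n}(p^r)\longrightarrow P^{m+n}(p^r).
\]
Thus the entire problem is reduced to understanding the single self-map $1\wedge p^r$ of $P^{m+n-1}(p^r)$.

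The next step is to recognise this self-map. Writing $S^{n-1}=\Sigma S^{n-2}$ and using the resulting co-$H$ structure, the degree $p^r$ map of $S^{n-1}$ is the $p^r$-fold comultiple of the identity; smashing with $P^{m}(p^r)$ is additive for this co-$H$ structure, so
\[
1\wedge p^r \;\simeq\; p^r\cdot\mathrm{id}_{P^{m+n-1}(p^r)}
\]
in the group $[P^{m+n-1}(p^r),P^{m+n-1}(p^r)]$. Hence the connecting map is exactly $p^r$ times the identity of a mod $p^r$ Moore space.

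The crucial input, and the \emph{only} place the odd primary hypothesis is used, is the classical fact that for $p$ odd and $N\geq 3$ the identity of $P^{N}(p^r)$ has order $p^r$, so that $p^r\cdot\mathrm{id}_{P^{N}(p^r)}\simeq 0$. I would obtain this from the exact sequence of groups $[-,P^{N}(p^r)]$ attached to the cofibration \nameddright{S^{N-1}}{p^r}{S^{N-1}}{}{P^{N}(p^r)}: restriction to the bottom cell sends $p^r\cdot\mathrm{id}$ to $p^r$ times the generator of $\pi_{N-1}(P^{N}(p^r))\cong\mathbb{Z}/p^r$, which vanishes, so $p^r\cdot\mathrm{id}$ lies in the image of $q^{\ast}\colon\pi_{N}(P^{N}(p^r))\to[P^{N}(p^r),P^{N}(p^r)]$; a computation of $\pi_{N}(P^{N}(p^r))$, into which for odd $p$ the Hopf map $\eta$ does not enter, then forces this class to be null. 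This is precisely where $p=2$ would break down, since there $\eta$ contributes an extra $\mathbb{Z}/2$ that raises the order of the identity to $2^{r+1}$ and obstructs the splitting (indeed $\mathbb{R}P^2\wedge\mathbb{R}P^2$ does not split as $P^{4}(2)\vee P^{3}(2)$, the cells being linked by $Sq^2$). This order computation is the main obstacle; everything else is formal.

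Granting $p^r\cdot\mathrm{id}_{P^{m+n-1}(p^r)}\simeq 0$, the connecting map in the cofibre sequence is null homotopic, and the mapping cone of a null homotopic self-map $A\to A$ is $A\vee\Sigma A$. Taking $A=P^{m+n-1}(p^r)$ and using $\Sigma P^{m+n-1}(p^r)\simeq P^{m+n}(p^r)$ yields
\[
P^{m}(p^r)\wedge P^{n}(p^r)\simeq P^{m+n-1}(p^r)\vee P^{m+n}(p^r),
\]
which is the assertion.
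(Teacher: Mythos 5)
The paper offers no proof of this lemma: it is quoted directly from Neisendorfer \cite{N2} (Proposition 6.2.2), as the $\Box$ placed in the statement indicates. Your argument is correct and is essentially the standard proof of that result: smash the defining cofibration of $P^{n}(p^r)$ with $P^{m}(p^r)$, identify the resulting connecting self-map of $P^{m+n-1}(p^r)$ as $p^r$ times the identity via linearity of the smash product over the suspension comultiplication, and invoke the fact that for odd $p$ the identity map of a mod $p^r$ Moore space is annihilated by $p^r$, so that the cofibre of the (null) connecting map splits as $A\vee\Sigma A$. The only loose point is your order claim for $N=3$: the Barratt--Puppe exact sequence shows only that $p^r\cdot\mathrm{id}_{P^{N}(p^r)}$ lies in the image of $q^{\ast}\colon\pi_{N}(P^{N}(p^r))\to[P^{N}(p^r),P^{N}(p^r)]$, and while $\pi_{N}(P^{N}(p^r))=0$ for $N\geq 4$ and $p$ odd (this is exactly Lemma~\ref{pinpnlemma} of the paper), one has $\pi_{3}(P^{3}(p^r))\cong\mathbb{Z}/p^r\mathbb{Z}$, so the case $m=n=2$ needs a separate word. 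Since every application in the paper has $m,n\geq 3$, hence $m+n-1\geq 5$, this caveat is immaterial here.
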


\begin{lemma}\cite{ST, So}\label{pinpnlemma}
Let $p$ be an odd prime.
\[
\pi_{3}(P^3(p^r))=\mathbb{Z}/p^r\mathbb{Z}, \ \ \  \pi_{n}(P^n(p^r))=0,
\]
for $n\geq 4$.
\end{lemma}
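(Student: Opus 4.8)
The plan is to separate the unstable case $n=3$ from the stable range $n\ge 4$, since the two claimed answers differ precisely because of this instability.

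For $n\ge 4$ I would argue directly from the cell structure $P^n(p^r)=S^{n-1}\cup_{p^r}e^n$, that is, from the homotopy cofibration $\namedright{S^{n-1}}{[p^r]}{S^{n-1}}\stackrel{i}{\longrightarrow}P^n(p^r)$. The pair $(P^n(p^r),S^{n-1})$ is $(n-1)$-connected with simply connected subspace, so the relative Hurewicz theorem gives $\pi_n(P^n(p^r),S^{n-1})\cong H_n(P^n(p^r),S^{n-1})\cong\mathbb{Z}$, and the connecting map $\partial$ sends a generator to the attaching class $p^r\in\pi_{n-1}(S^{n-1})\cong\mathbb{Z}$, hence is injective. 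The homotopy exact sequence of the pair then forces $i_*\colon\pi_n(S^{n-1})\to\pi_n(P^n(p^r))$ to be surjective. Since $n-1\ge 3$ we have $\pi_n(S^{n-1})\cong\mathbb{Z}/2\mathbb{Z}\{\eta\}$, and because $p^r$ is odd while $2\eta=0$, in this stable stem $\eta=p^r\eta=[p^r]\circ\eta$ (post-composition with a degree $p^r$ self-map being multiplication by $p^r$). As $i\circ[p^r]\simeq\ast$ by the cofibration, $i_*\eta=(i\circ[p^r])\circ\eta\simeq\ast$, and therefore $\pi_n(P^n(p^r))=0$.

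For $n=3$ the space is rationally trivial and $\ell$-locally contractible for every prime $\ell\ne p$, so $\pi_3(P^3(p^r))$ is a finite $p$-group and I may localize at $p$. Here I would use the fibration (\ref{Fpeq}) with $n=1$, namely $F^3\{p^r\}\to P^3(p^r)\stackrel{q}{\to}S^3$. The pinch map $q$ collapses the bottom cell, so $q_*=0$ on $\pi_3$ for degree reasons, and $\pi_4(S^3)_{(p)}=0$ as $p$ is odd; the long exact sequence then yields $\pi_3(P^3(p^r))\cong\pi_3(F^3\{p^r\})_{(p)}=\pi_2(\Omega F^3\{p^r\})_{(p)}$. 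Now I would feed in the Cohen--Moore--Neisendorfer decomposition (Theorem \ref{CMNthm}) for $\Omega F^3\{p^r\}$ and compute $\pi_2$ of each factor: the $S^1$ factor and the highly connected factors $S^{2p^k-1}\{p^r\}$ (with $2p^k-1\ge 5$) contribute nothing, while $\pi_2(\Omega\Sigma\bigvee_\alpha P^{n_\alpha}(p^r))\cong\pi_3(\bigvee_\alpha P^{n_\alpha+1}(p^r))$. The latter wedge is $2$-connected with least cell in dimension $3$, so by Hurewicz it equals $\bigoplus_{n_\alpha=3}\mathbb{Z}/p^r\mathbb{Z}$, a nonempty direct sum since the least $n_\alpha$ equals $3$.

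It remains to see that this sum has a single summand, and this I expect to be the main obstacle. For it I would invoke the pair sequence above in the case $n=3$: relative Hurewicz still gives $\pi_3(P^3(p^r),S^2)\cong\mathbb{Z}$ with injective boundary, so $i_*\colon\pi_3(S^2)\cong\mathbb{Z}\to\pi_3(P^3(p^r))$ is surjective and $\pi_3(P^3(p^r))$ is cyclic. A nontrivial direct sum of copies of $\mathbb{Z}/p^r\mathbb{Z}$ is cyclic only when exactly one copy occurs, whence $\pi_3(P^3(p^r))\cong\mathbb{Z}/p^r\mathbb{Z}$. The delicate point is exactly this: Theorem \ref{CMNthm} as stated only asserts that the bottom Moore space sits in dimension $4n-1$, not that it occurs with multiplicity one, so it is the cyclicity coming from the cofibre structure of $P^3(p^r)$ that pins the group down; the reduction to $p$-local coefficients and the exact-sequence bookkeeping are routine by comparison.
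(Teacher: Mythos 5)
Your argument is correct, and it is genuinely more self-contained than the paper's, which for $n=3$ and $n=4$ simply cites \cite[Lemma 2.1]{ST} and \cite[Lemma 3.3]{So} and then invokes the Freudenthal suspension theorem for $n\geq 5$. Your stable-range case ($n\geq 4$) is an elementary cofibration computation: relative Hurewicz gives surjectivity of $i_\ast\colon\pi_n(S^{n-1})\rightarrow\pi_n(P^n(p^r))$, and the identity $[p^r]\circ\eta=p^r\eta=\eta$ (valid because $\eta\in\pi_n(S^{n-1})$ is a suspension for $n-1\geq 3$, so $(p^r\iota)\circ\eta=p^r(\iota\circ\eta)$) together with $i\circ[p^r]\simeq\ast$ kills the generator; this handles all $n\geq 4$ uniformly with no stabilization step. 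For $n=3$ you run the fibration (\ref{Fpeq}) and Theorem \ref{CMNthm} in exactly the style the paper uses for Lemma \ref{pin+2pnlemma}, so the proof fits naturally into Section \ref{sec: hgps}; the one subtlety you correctly isolate is that Theorem \ref{CMNthm} as stated does not give the multiplicity of the bottom Moore space $P^{3}(p^r)$ in the bouquet, and your fix --- $\pi_3(P^3(p^r))$ is cyclic because it is a quotient of $\pi_3(S^2)\cong\mathbb{Z}$, and a nontrivial cyclic direct sum of copies of $\mathbb{Z}/p^r\mathbb{Z}$ has exactly one summand --- is precisely the right extra input. (Cyclicity alone would not suffice: since $\eta_2$ is not a suspension, $[p^r]\circ\eta_2=p^{2r}\eta_2$, so the cofibration only bounds the order by $p^{2r}$; the Cohen--Moore--Neisendorfer input is genuinely needed for the exact order.) What the paper's route buys is brevity; what yours buys is independence from the cited lemmas and a transparent explanation of why the answer changes between $n=3$ and $n\geq 4$.
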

\begin{proof}
The cases when $n=3$ and $4$ were already proved in \cite[Lemma 2.1]{ST} and \cite[Lemma 3.3]{So} respectively, while the remaining cases follow immediately from the Freudenthal Suspension Theorem.
\end{proof}

\begin{lemma}\label{pin+1pnlemma}
Let $p$ be an odd prime.
\[
\pi_{n+1}(P^n(p^r))=0,
\]
for $n\geq 3$.
\end{lemma}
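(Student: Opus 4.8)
The plan is to compute $\pi_{n+1}(P^n(p^r))$ by using the homotopy fibration (\ref{Fpeq}) together with the Cohen--Moore--Neisendorfer decomposition (Theorem \ref{CMNthm}), reducing to a range where the fibre $F$ contributes nothing. First I would reduce to a single dimension via the Freudenthal Suspension Theorem: since $P^n(p^r)$ is $(n-2)$-connected, the suspension map $\pi_{n+1}(P^n(p^r))\to \pi_{n+2}(P^{n+1}(p^r))$ is an isomorphism for $n\geq 4$ and an epimorphism for $n=3$, so it suffices to establish the vanishing in the lowest odd-dimensional case, namely for $P^{2n+1}(p^r)$ with $2n+1=5$ (i.e.\ $\pi_6(P^5(p^r))$), and then propagate upward. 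I would work $p$-locally throughout, which is harmless since $P^n(p^r)$ is already a $p$-local space in positive degrees.

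The key step is to analyze the fibration $\namedright{F^{2n+1}\{p^r\}}{}{P^{2n+1}(p^r)}\stackrel{q}{\longrightarrow} S^{2n+1}$ on the relevant homotopy group. Looping once, Theorem \ref{CMNthm} gives
\[
\Omega F^{2n+1}\{p^r\}\simeq_{(p)} S^{2n-1}\times \prod_{k=1}^{\infty} S^{2p^kn-1}\{p^r\}\times \Omega\Sigma \mathop{\bigvee}\limits_{\alpha} P^{n_\alpha}(p^r),
\]
where the least $n_\alpha$ equals $4n-1$. I would read off the low-degree homotopy of $F^{2n+1}\{p^r\}$ from this product: the factor $S^{2n-1}$ contributes $\pi_{n+1}(\Omega F)\supseteq \pi_{n+1}(S^{2n-1})$, the fibres $S^{2p^kn-1}\{p^r\}$ and the bouquet factor $\Omega\Sigma\bigvee P^{n_\alpha}(p^r)$ begin in degrees well above $n$ (since $2pn-1\geq 4n-1>n$ and $4n-1>n$ for $n\geq 1$), so in the range $*\leq n$ the only contribution to $\pi_*(\Omega F)=\pi_{*+1}(F)$ comes from the sphere $S^{2n-1}$. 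Thus I would conclude $\pi_{n+2}(F^{2n+1}\{p^r\})\cong \pi_{n+1}(S^{2n-1})$, which is trivial $p$-locally for $p\geq 3$ in this low degree, and likewise $\pi_{n+1}(F^{2n+1}\{p^r\})=0$. Feeding this into the long exact homotopy sequence of (\ref{Fpeq}), the group $\pi_{n+1}(P^{2n+1}(p^r))$ is squeezed between vanishing terms (using also that $\pi_{n+1}(S^{2n+1})=0$ and $\pi_n(S^{2n+1})=0$ in this range), forcing $\pi_{n+1}(P^{2n+1}(p^r))=0$.

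I expect the main obstacle to be bookkeeping the connectivity estimates precisely: one must verify that for the smallest relevant value of $n$ every factor in the Cohen--Moore--Neisendorfer product other than $S^{2n-1}$ genuinely starts above the degree $n+1$, and that the residual contribution $\pi_{n+1}(S^{2n-1})$ vanishes $p$-locally at odd primes. The latter is the crux: $S^{2n-1}$ contributes a potential class in degree $n+1$ only through $\pi_{n+1}(S^{2n-1})$, and since $n+1 < 2n-1$ forces this group to be $0$ (the stable/unstable stem is in negative range) for all $n\geq 3$, the obstruction disappears. Once these estimates are in hand, the long exact sequence argument and the Freudenthal step are routine, completing the proof.
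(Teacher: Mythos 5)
There is a genuine gap: your Freudenthal reduction runs in the wrong direction, so the cases $n=3$ and $n=4$ are not covered. Suspension maps go from lower Moore spaces to higher ones, so knowing $\pi_6(P^5(p^r))=0$ and that $\pi_{n+1}(P^n(p^r))\to\pi_{n+2}(P^{n+1}(p^r))$ is surjective tells you nothing about $\pi_5(P^4(p^r))$ or $\pi_4(P^3(p^r))$: a surjection onto the zero group places no constraint on its source. You would need to establish vanishing at the \emph{bottom} of the range and propagate \emph{up}. Moreover your stated ranges are off by one: since $P^n(p^r)$ is $(n-2)$-connected, Freudenthal gives an isomorphism $\pi_{n+1}(P^n(p^r))\to\pi_{n+2}(P^{n+1}(p^r))$ only for $n\geq 5$ and an epimorphism for $n=4$; for $n=3$ the degree $n+1=4$ exceeds $2(n-2)+1=3$ and Freudenthal says nothing at all. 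So even granting your computation of $\pi_6(P^5(p^r))$, which does go through (the long exact sequence of the fibration $F^5\{p^r\}\to P^5(p^r)\to S^5$ together with Theorem \ref{CMNthm} shows $\pi_6(F^5\{p^r\})$ vanishes $p$-locally, and $\pi_6(S^5)\cong\mathbb{Z}/2$ dies at odd primes — though note the vanishing of $\pi_5(S^3)$ here is a $p$-local fact, not a connectivity fact as your ``$n+1<2n-1$'' remark suggests), you have only proved the lemma for $n\geq 5$.

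The two missing cases require separate arguments. The paper takes $\pi_4(P^3(p^r))=0$ from \cite[Lemma 3.3]{So} and then computes $\pi_5(P^4(p^r))$ by the EHP sequence
\[
0=\pi_4(P^3(p^r))\to \pi_5(P^4(p^r))\stackrel{H}{\to}\pi_5(P^4(p^r)\wedge P^3(p^r))\stackrel{P}{\to}\pi_3(P^3(p^r))\to\pi_4(P^4(p^r))=0,
\]
where Lemma \ref{sma-wedlemma} identifies the middle group with $\mathbb{Z}/p^r\mathbb{Z}$ and Lemma \ref{pinpnlemma} forces $P$ to be an isomorphism, whence $\pi_5(P^4(p^r))=0$; only then does Freudenthal take over for $n\geq 5$. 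Your fibration strategy cannot be applied verbatim to $P^4(p^r)$ in any case, since Theorem \ref{CMNthm} as stated decomposes $\Omega F^{2n+1}\{p^r\}$ only for odd-dimensional Moore spaces. To repair your proposal you would need either to import the even-dimensional Cohen--Moore--Neisendorfer theory, or to supply independent arguments for $n=3$ and $n=4$ as the paper does.
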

\begin{proof}
$\pi_4(P^3(p^r))=0$ was showed in \cite[Lemma 3.3]{So}. Let us consider $\pi_{5}(P^4(p^r))$.
By the classical EHP-sequence (Chapter XII, Theorem 2.2 of \cite{Whi}), there is the exact sequence
\[
0=\pi_4(P^3(p^r))\stackrel{}{\rightarrow} \pi_5(P^4(p^r))\stackrel{H}{\rightarrow} \pi_5(P^4(p^r)\wedge P^3(p^r)) \stackrel{P}{\rightarrow}\pi_3(P^3(p^r))\stackrel{}{\rightarrow}\pi_4(P^4(p^r))=0.
\]
By Lemma \ref{sma-wedlemma}, 
\[
\pi_5(P^4(p^r)\wedge P^3(p^r))\cong \pi_5(P^6(p^r)\vee P^7(p^r))\cong \mathbb{Z}/p^r\mathbb{Z}. 
\]
Hence, by Lemma \ref{pinpnlemma} and the above exact sequence, $P$ is an isomorphism and then $\pi_5(P^4(p^r))=0$. The remaining cases follow immediately from the Freudenthal Suspension Theorem, and this completes the proof of the lemma.
\end{proof}

In the remaining two lemmas, we exclude the case when $p=3$.
\begin{lemma}\label{pin+2pnlemma}
Let $p\geq 5$.
\[
\pi_{n+2}(P^n(p^r))=0,
\]
for $n\geq 6$.
\end{lemma}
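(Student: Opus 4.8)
The plan is to reduce the statement to a single computation in the stable range and then read the answer off the cofibration that defines the Moore space; the bound $n\ge 6$ is precisely the onset of the stable range, which is the clue that this is the right route.

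First I would record that $P^n(p^r)$ has cells only in dimensions $n-1$ and $n$, so it is $(n-2)$-connected. By the Freudenthal suspension theorem the suspension
\[
E\colon \pi_{n+2}(P^n(p^r))\longrightarrow \pi_{n+3}(P^{n+1}(p^r))
\]
is an isomorphism as soon as $n+2\le 2(n-2)$, that is, for $n\ge 6$. Consequently all the groups $\pi_{n+2}(P^n(p^r))$ with $n\ge 6$ are isomorphic to one another, so it suffices to compute any one of them, equivalently the common stable value.

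Next I would use the defining cofibration of the mod $p^r$ Moore space,
\[
S^{n-1}\stackrel{p^r}{\longrightarrow}S^{n-1}\longrightarrow P^n(p^r)\longrightarrow S^n\stackrel{p^r}{\longrightarrow}S^n.
\]
Since $n\ge 6$ places both $\pi_{n+2}(S^{n-1})$ and $\pi_{n+2}(S^n)$ in the stable range, this cofibration induces a long exact sequence
\[
\pi_{n+2}(S^{n-1})\stackrel{p^r}{\longrightarrow}\pi_{n+2}(S^{n-1})\longrightarrow \pi_{n+2}(P^n(p^r))\longrightarrow \pi_{n+2}(S^n)\stackrel{p^r}{\longrightarrow}\pi_{n+2}(S^n).
\]
The outer groups are the stable stems $\pi_{n+2}(S^{n-1})\cong \pi^s_3\cong\mathbb{Z}/24\mathbb{Z}$ and $\pi_{n+2}(S^n)\cong\pi^s_2\cong\mathbb{Z}/2\mathbb{Z}$. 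As $24=2^3\cdot 3$, for $p\ge 5$ neither stem has $p$-torsion, so both vanish after localization at $p$; and since $P^n(p^r)$ is a $p$-local, $p^r$-torsion complex, its homotopy groups equal their $p$-localizations. Thus $\pi_{n+2}(P^n(p^r))$ is trapped between the cokernel $\mathrm{coker}(p^r\colon\pi^s_3\to\pi^s_3)=0$ and the kernel $\ker(p^r\colon\pi^s_2\to\pi^s_2)=0$ in the sequence above, forcing $\pi_{n+2}(P^n(p^r))=0$.

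The only step needing care is the passage from the cofibration to an exact sequence of homotopy groups; this is automatic in the stable category (or one may invoke the Blakers--Massey theorem in the metastable range, exactly as in the proof of Proposition \ref{Vd=1decprop}), and the Freudenthal reduction guarantees we stay inside that range for $n\ge 6$. I would also stress where the hypothesis $p\ge 5$ is essential: the first $p$-torsion class in the stable stems sits in stem $2p-3$, which is $\ge 7$ when $p\ge 5$ but equals $3$ when $p=3$ (the class $\alpha_1$), so $\pi^s_3$ picks up $3$-torsion at $p=3$ and both the argument and the conclusion genuinely fail there. As an alternative in keeping with the tools of this section, for odd Moore-space dimensions one can instead substitute the fibration (\ref{Fpeq}) into the Cohen--Moore--Neisendorfer splitting of Theorem \ref{CMNthm}: in the relevant degree its factors contribute only a stable stem with no $p$-torsion (from the sphere factor $S^{2n-1}$) together with the homotopy of highly connected spaces (from the factors $S^{2p^kn-1}\{p^r\}$ and $\Omega\Sigma\bigvee_\alpha P^{n_\alpha}(p^r)$), all of which vanish, whence the result for odd dimensions, with the even ones following by the same Freudenthal isomorphism.
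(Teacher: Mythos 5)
Your proof is correct, but it takes a different route from the paper's. The paper also begins with the Freudenthal reduction to a single group (it picks $\pi_9(P^7(p^r))$), but then works unstably with the fibration $F^{7}\{p^r\}\to P^{7}(p^r)\to S^{7}$ of (\ref{Fpeq}): the Cohen--Moore--Neisendorfer decomposition of Theorem \ref{CMNthm} identifies $\pi_9(F^7\{p^r\})\cong\pi_8(S^5)_{(p)}\cong(\mathbb{Z}/24\mathbb{Z})_{(p)}=0$ for $p\ge 5$, and the long exact sequence of the fibration then traps $\pi_9(P^7(p^r))$ between this and $\pi_9(S^7)_{(p)}=0$. You instead use the stable Puppe sequence of the degree-$p^r$ self-map of the sphere, trapping the group between $\operatorname{coker}(p^r\colon\pi_3^s\to\pi_3^s)$ and $\ker(p^r\colon\pi_2^s\to\pi_2^s)$, both zero since $24=2^3\cdot 3$ and $p\ge 5$. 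The two arguments rest on exactly the same arithmetic fact (no $p$-torsion in $\pi_3^s$ for $p\ge 5$, which is where $p=3$ fails via $\alpha_1$, as you correctly note), but yours is more elementary and self-contained --- it avoids Theorem \ref{CMNthm} entirely and computes the full stable group in one stroke --- whereas the paper's version stays within the fibration-theoretic toolkit of Section \ref{sec: hgps} that it needs anyway for Lemma \ref{pinpnlemma} and Lemma \ref{pin+1pnlemma}. Your closing alternative via (\ref{Fpeq}) and the CMN splitting is essentially the paper's argument, so you have in effect supplied both proofs. The only points deserving care, which you address, are that the Freudenthal isomorphism and the exactness of the cofibration-induced sequence both require the stable/metastable range, and $n\ge 6$ is exactly where this kicks in.
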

\begin{proof}
By the Freudenthal Suspension Theorem, it suffices to show $\pi_{9}(P^7(p^r))=0$. For that let us compute $\pi_{9}(F^7\{p^r\})$ first. By Theorem \ref{CMNthm},
\[
\pi_{9}(F^7\{p^r\})\cong \pi_{8}(\Omega F^7\{p^r\})\cong \pi_{8}(S^5)_{(p)}.
\]
Since $\pi_{8}(S^5)\cong \mathbb{Z}/24\mathbb{Z}$ and $p\geq 5$, $\pi_{9}(F^7\{p^r\})=0$. Now from the exact sequence of homotopy groups of the homotopy fibration (\ref{Fpeq}) ($n=3$)
\[
0=\pi_{9}(F^7\{p^r\})\rightarrow \pi_{9}(P^7(p^r)) \rightarrow \pi_9(S^7)_{(p)}=0,
\]
we see that $\pi_{9}(P^7(p^r))=0$.
\end{proof}

\begin{lemma}\label{pi6p4pnlemma}
Let $p\geq 5$.
The suspension morphism
\[
E: \pi_{6}(P^4(p^r))\rightarrow \pi_{7}(P^5(p^r))\cong \mathbb{Z}/p^r\mathbb{Z}
\]
is trivial.
\end{lemma}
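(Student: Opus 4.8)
The plan is to avoid computing $\pi_6(P^4(p^r))$ altogether, and instead to show that the image of $E$ vanishes by proving that the \emph{next} James--Hopf invariant in the EHP sequence is injective. Since $P^5(p^r)=\Sigma P^4(p^r)$, I will run the EHP sequence of James (\cite{Whi}, Chapter XII, Theorem 2.2) for the $2$-connected complex $X=P^4(p^r)$, exactly as in the proof of Lemma \ref{pin+1pnlemma}. Writing the quadratic term as $\Sigma(P^4(p^r)\wedge P^4(p^r))\simeq P^5(p^r)\wedge P^4(p^r)$, the relevant portion is the exact sequence
\[
\pi_6(P^4(p^r))\xrightarrow{E}\pi_7(P^5(p^r))\xrightarrow{H}\pi_7(P^5(p^r)\wedge P^4(p^r))\xrightarrow{P}\pi_5(P^4(p^r)),
\]
valid in the metastable range, which comfortably covers these dimensions because $X$ is $2$-connected. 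By exactness at $\pi_7(P^5(p^r))$ we have $\mathrm{im}(E)=\ker(H)$, so it suffices to prove that $H$ is injective.

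First I would pin down the three groups. By Lemma \ref{pin+1pnlemma}, $\pi_5(P^4(p^r))=0$. For the quadratic term, Lemma \ref{sma-wedlemma} gives $P^5(p^r)\wedge P^4(p^r)\simeq P^9(p^r)\vee P^8(p^r)$; since $P^9(p^r)$ is $7$-connected and $P^8(p^r)$ is $6$-connected, the Hurewicz theorem yields $\pi_7(P^5(p^r)\wedge P^4(p^r))\cong\pi_7(P^8(p^r))\cong\mathbb{Z}/p^r\mathbb{Z}$. Finally, the isomorphism $\pi_7(P^5(p^r))\cong\mathbb{Z}/p^r\mathbb{Z}$ asserted in the statement is itself obtained from the fibration (\ref{Fpeq}) with $n=2$ together with Theorem \ref{CMNthm}, precisely as in the proof of Lemma \ref{pin+2pnlemma}: the localized long exact sequence gives $\pi_7(P^5(p^r))\cong\pi_7(F^5\{p^r\})\cong\pi_6(\Omega F^5\{p^r\})$, and in the Cohen--Moore--Neisendorfer splitting the factor $S^3$ contributes $\pi_6(S^3)_{(p)}=(\mathbb{Z}/12\mathbb{Z})_{(p)}=0$ for $p\geq 5$, the spherical factors $S^{4p^k-1}\{p^r\}$ are far too highly connected to matter, and the lone bottom Moore summand $P^7(p^r)$ of the wedge contributes the $\mathbb{Z}/p^r\mathbb{Z}$ through $\pi_6(\Omega\Sigma P^7(p^r))\cong H_7=\mathbb{Z}/p^r\mathbb{Z}$.

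With these in hand the argument collapses quickly. Since $\pi_5(P^4(p^r))=0$, the map $P$ in the sequence is zero, so by exactness at the quadratic term $H$ is \emph{surjective}. But $H$ is then a surjection $\mathbb{Z}/p^r\mathbb{Z}\twoheadrightarrow\mathbb{Z}/p^r\mathbb{Z}$ between finite groups of equal order, hence an isomorphism, so $\ker(H)=0$. By exactness at $\pi_7(P^5(p^r))$ this forces $\mathrm{im}(E)=0$, i.e. $E$ is trivial. The one point requiring care, and the main obstacle, is the bookkeeping needed to secure the two inputs that drive the collapse: confirming that the James EHP sequence is exact through dimension $7$ for the $2$-connected complex $P^4(p^r)$ (the same range used in Lemma \ref{pin+1pnlemma}), and verifying that $\pi_7(P^5(p^r))$ has order exactly $p^r$ so that the surjection $H$ is forced to be injective. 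Everything else is a direct application of Lemmas \ref{sma-wedlemma}, \ref{pinpnlemma} and \ref{pin+1pnlemma}.
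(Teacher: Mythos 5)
Your first EHP sequence, and the conclusion you draw from it --- that $H$ is onto $\pi_7(P^5(p^r)\wedge P^4(p^r))\cong\mathbb{Z}/p^r\mathbb{Z}$ because $\pi_5(P^4(p^r))=0$ --- is exactly the first half of the paper's proof (its equation (\ref{75eq1})). The divergence, and the gap, lies in how you bound the order of $\pi_7(P^5(p^r))$ from above. You propose to read it off from Theorem \ref{CMNthm} applied to $F^5\{p^r\}$; your reduction to $\pi_6(\Omega F^5\{p^r\})_{(p)}$ and the elimination of the $S^3$ and $S^{4p^k-1}\{p^r\}$ factors are fine, but the theorem as quoted only says that the wedge $\bigvee_\alpha P^{n_\alpha}(p^r)$ has least $n_\alpha$ equal to $4n-1=7$ with finitely many summands in each dimension --- it does not say there is a \emph{lone} bottom summand $P^{7}(p^r)$. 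From the cited statement alone you can only conclude $\pi_7(P^5(p^r))\cong(\mathbb{Z}/p^r\mathbb{Z})^{\oplus N}$ for some $N\geq 1$, and your counting step (``a surjection between finite groups of equal order is an isomorphism'') collapses unless $N=1$. The multiplicity-one assertion is true, but it requires going back into the Cohen--Moore--Neisendorfer identification of the wedge, which is exactly the kind of input your write-up treats as automatic. Note that Lemma \ref{pin+2pnlemma} can afford to quote Theorem \ref{CMNthm} loosely because there the wedge begins in dimension $11$ and is irrelevant to $\pi_9$; you are evaluating homotopy precisely at the bottom of the wedge, where the multiplicity is the whole question.

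The paper closes this hole without touching the CMN wedge at all: it runs a second EHP sequence, now for $P^5(p^r)$, namely $\pi_9(P^6(p^r)\wedge P^5(p^r))\xrightarrow{P}\pi_7(P^5(p^r))\rightarrow\pi_8(P^6(p^r))=0$, where the vanishing on the right is Lemma \ref{pin+2pnlemma} and the left-hand group is a single $\mathbb{Z}/p^r\mathbb{Z}$ by Lemma \ref{sma-wedlemma}. This exhibits $\pi_7(P^5(p^r))$ as a quotient of $\mathbb{Z}/p^r\mathbb{Z}$, i.e. gives the upper bound $|\pi_7(P^5(p^r))|\leq p^r$, and combined with the surjectivity of $H$ from the first sequence it forces both $\pi_7(P^5(p^r))\cong\mathbb{Z}/p^r\mathbb{Z}$ and ${\rm Im}(E)=0$ simultaneously. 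If you replace your CMN computation by this second EHP sequence, your argument becomes the paper's essentially verbatim.
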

\begin{proof}
On the one hand there is the EHP-sequence of $P^4(p^r)$
\[
\pi_6(P^4(p^r))\stackrel{E}{\rightarrow}\pi_7(P^5(p^r))\stackrel{H}{\rightarrow}\pi_7(P^5(P^r)\wedge P^4(p^r))\stackrel{}{\rightarrow}\pi_5(P^4(p^r))=0,
\]
where $\pi_5(P^4(p^r))=0$ by Lemma \ref{pin+1pnlemma}, and 
\[
\pi_7(P^5(P^r)\wedge P^4(p^r))\cong \pi_7(P^8(P^r)\wedge P^9(p^r))\cong \mathbb{Z}/p^r\mathbb{Z}
\]
by Lemma \ref{sma-wedlemma}. It follows that 
\begin{equation}\label{75eq1}
\pi_7(P^5(p^r))/{\rm Im}(E)\cong \mathbb{Z}/p^r\mathbb{Z}.
\end{equation}
On the other hand there is the EHP-sequence of $P^5(p^r)$
\[
\pi_9(P^6(P^r)\wedge P^5(p^r))\stackrel{P}{\rightarrow}\pi_7(P^5(p^r))\stackrel{}{\rightarrow}\pi_8(P^6(p^r))=0,
\]
where $\pi_8(P^6(p^r))=0$ by Lemma \ref{pin+2pnlemma}, and 
\[
\pi_9(P^6(P^r)\wedge P^5(p^r))\cong \pi_9(P^{10}(P^r)\wedge P^{11}(p^r))\cong \mathbb{Z}/p^r\mathbb{Z}
\]
by Lemma \ref{sma-wedlemma}.
It follows that 
\begin{equation}\label{75eq2}
\pi_7(P^5(p^r))\cong \mathbb{Z}/p^r\mathbb{Z}/{\rm Ker}(P).
\end{equation}
Combining (\ref{75eq1}) and (\ref{75eq2}), we see that $\pi_{7}(P^5(p^r))\cong \mathbb{Z}/p^r\mathbb{Z}$, and ${\rm Im}(E)={\rm Ker}(P)=0$. The proof of the lemma is completed.
\end{proof}

%%% The bibliography %%%
\bibliographystyle{amsalpha}

\end{document}